\documentclass[reqno,11pt]{amsart}
\usepackage{amsthm,amsfonts,amssymb,euscript,
mathrsfs,graphics,color,amsmath,latexsym,marginnote}
 


\theoremstyle{plain}

\usepackage{hyperref}
\usepackage{times}

\setlength{\marginparwidth}{3cm}

\numberwithin{equation}{section}


\oddsidemargin   0cm  
\evensidemargin 0cm  
\topmargin  0.05cm      
\headheight 0.37cm      
\headsep    0.38cm      
\textwidth  16.5cm      
\textheight 22.5cm     
\footskip   1.5cm      




\newtheorem{theorem}{Theorem}[section]
\newtheorem{proposition}[theorem]{Proposition}
\newtheorem{lemma}[theorem]{Lemma}
\newtheorem{corollary}[theorem]{Corollary}

\newtheorem{remark}[theorem]{Remark}
\newtheorem{remarks}[theorem]{Remark}

\newtheorem{definition}[theorem]{Definition}
\newcommand{\be}{\begin{equation}}
\newcommand{\ee}{\end{equation}}
\newcommand{\om}{\omega}
\newcommand{\eps}{\varepsilon}
\newcommand{\e}{\varepsilon}

\newcommand{\ov}{\overline}

\newcommand{\R}{\mathbb R}
\newcommand{\C}{\mathbb C}

\newcommand{\N}{\mathbb N}

\renewcommand{\b }{\beta }
\newcommand{\s }{\sigma }
\newcommand{\ii }{{\rm i} }

\newcommand{\g }{\gamma}

\newcommand{\vphi}{\varphi }

\renewcommand{\o }{\omega }

\newcommand{\bral}{[ \! [} 
\newcommand{\brar}{] \! ]}

\newcommand{\pa}{\partial}



\def\bar{\overline}

\newcommand{\Rc}{\mathcal R}

\renewcommand{\S}{\mathbb{S}}

\def\ba{\begin{aligned}}
\def\ea{\end{aligned}}
\def\beginm{\begin{multline}}
\def\endm{\end{multline}}




\setcounter{tocdepth}{1}
\makeatletter


%

%
\def\l@subsection{\@tocline{2}{0pt}{2.5pc}{5pc}{}}
\def\l@subsubsection{\@tocline{3}{0pt}{4.5pc}{5pc}{}}
\renewcommand\tocchapter[3]{%
  \indentlabel{\@ifnotempty{#2}{\ignorespaces#2.\quad}}#3%
}
\newcommand\@dotsep{4.5}
\def\@tocline#1#2#3#4#5#6#7{\relax
  \ifnum #1>\c@tocdepth 
  \else
    \par \addpenalty\@secpenalty\addvspace{#2}%
    \begingroup \hyphenpenalty\@M
    \@ifempty{#4}{%
      \@tempdima\csname r@tocindent\number#1\endcsname\relax
    }{%
      \@tempdima#4\relax
    }%
    \parindent\z@ \leftskip#3\relax \advance\leftskip\@tempdima\relax
    \rightskip\@pnumwidth plus1em \parfillskip-\@pnumwidth
    #5\leavevmode\hskip-\@tempdima{#6}\nobreak
    \leaders\hbox{$\m@th\mkern \@dotsep mu\hbox{.}\mkern \@dotsep mu$}\hfill
    \nobreak
    \hbox to\@pnumwidth{\@tocpagenum{#7}}\par
    \nobreak
    \endgroup
  \fi}
\makeatother
\AtBeginDocument{%
\makeatletter
\expandafter\renewcommand\csname r@tocindent0\endcsname{0pt}
\makeatother
}
\def\l@subsection{\@tocline{2}{0pt}{2.5pc}{5pc}{}}


\begin{document}

\title[Reducibility of LS on the Sphere]{Reducibility of  Schr\"odinger equation 
on the Sphere.}

\date{}

\author{Roberto Feola}
\address{\scriptsize{Laboratoire de Math\'ematiques Jean Leray, Universit\'e 
de Nantes, 
UMR CNRS 6629
\\
2, rue de la Houssini\`ere 
\\
44322 Nantes Cedex 03, France}
}
\email{roberto.feola@univ-nantes.fr}

\author{Beno\^it Gr\'ebert}
\address{\scriptsize{Laboratoire de Math\'ematiques Jean Leray, Universit\'e de Nantes, UMR CNRS 6629\\
2, rue de la Houssini\`ere \\
44322 Nantes Cedex 03, France}}
\email{benoit.grebert@univ-nantes.fr}

\thanks{During the preparation of this work 
the two authors benefited from the support 
of the Centre Henri Lebesgue ANR-11-LABX- 0020-01 and  of 
ANR -15-CE40-0001-02  ``BEKAM''  of the Agence 
Nationale de la Recherche.  
R. F. was also supported by ERC starting grant FAFArE of the European Commission and
 B.G. 
 by  ANR-16-CE40-0013 ``ISDEEC'' of the Agence Nationale de la Recherche.}

\begin{abstract} 
In this article we prove a reducibility result for the linear Schr\"odinger equation on the sphere $\S^n$ with  quasi-periodic in time perturbation. Our result includes the case of unbounded perturbation that we assume to be of order strictly less than $1/2$ and satisfying some parity condition. As far as we know, this is one of the few reducibility results for an equation in more than one dimension with unbounded perturbations. We notice that our result does not requires the use of the pseudo-differential calculus.
\end{abstract}  
  
\maketitle

\tableofcontents

\section{Introduction}
In this article we are interested in the problem of reducibility for the linear Schr\"odinger equation on the sphere with  quasi-periodic in time perturbation.
In the introduction,  to make our statement more readable, we state our results in the physical space $\S^2\subset \R^3$ and with an explicit  linear perturbation. A more general statement, including the higher dimension case $\S^n$ for $n\geq3$ (the case $n=1$ is much more simpler and already known, see \cite{Kuk93}), is detailed in section \ref{AppliNLS}. So we consider the following linear Schr\"odinger equation on the $\mathbb{S}^{2}$
\[\label{nls}\tag{LS}
\ii \pa_{t}u=\Delta u+\eps\big( W(\o t,x)(-\ii\partial_\phi)^\alpha+V(\omega t,x)\big)u\,,\qquad u=u(t,x)\,,
\quad t\in \mathbb{R}\,,\quad x\in \mathbb{S}^{2}\,,
\]
where $\Delta$ denotes the Laplace-Beltrami operator on $\mathbb{S}^{2}$,  $\ii\partial_\phi=\ii(x_1\partial_{x_2}-x_2\partial_{x_1})$ is the 
$x_3$ component of the orbital angular momentum 
(and the generator of rotations about the $x_{3}$ axis) 
and $0\leq\alpha<1/2$. The operator $(-\ii\partial_\phi)^\alpha$ 
is precisely defined in \eqref{partialalfa}.
The parameter $\eps>0$ is  small, the frequency vector $\omega$ 
 belongs to $\mathcal{O}_0:=[1/2,3/2]^{d}\subset \mathbb{R}^{d}$, $d\geq1$.
The functions $W,\,V$ in \eqref{nls}
are  real valued multiplicative potentials depending quasi periodically on 
time, i.e. $V$ is a function in $C^{0}(\mathbb{T}^{d}\times \mathbb{S}^{2};\mathbb{R})$, $\mathbb{T}:=\mathbb{R}/2\pi \mathbb{Z}$.
We assume that $W,\,V$ are real analytic functions
with respect to the angle variable $\vphi\in\mathbb{T}^{d}$ with values in
the Sobolev space
$H^{s}(\mathbb{S}^{2};\mathbb{R})$ with $s>d/2+1$.
In particular the map 
$\mathbb{T}^{d}\ni\vphi\mapsto V(\vphi,\cdot)\in 
H^{s}(\mathbb{S}^{2};\mathbb{R})$ analytically extends to
\begin{equation}\label{toroidal}
\mathbb{T}^{d}_{\s}:=\big\{ (a+\ii b)\in \mathbb{C}^{d}\, : \, 
a\in \mathbb{T}^{d}\,, |b|<\s\big\}\,,
\end{equation}
for some $\s>0$.\\
We stress out that \eqref{nls} doesn't describe the most general case that we can consider, in particular $\partial_\phi$ could be replaced by some
unbounded operator. 

The purpose of reducibility is to construct a change of variables that transforms the non-autonomous equation \eqref{nls} into an autonomous equation.\\
Our main result is the following.
\begin{theorem}\label{thm:main}  Let $0<\delta<1$ and $0\leq \alpha<1/2$. 
Assume that 
$\vphi\mapsto V(\vphi,\cdot)\in 
H^{s+s_0}(\mathbb{S}^{2};\mathbb{R})$ and 
$\vphi\mapsto W(\vphi,\cdot)\in 
H^{s+s_0}(\mathbb{S}^{2};\mathbb{R})$ 
analytically extend to 
$\mathbb{T}^{d}_{\s}$ for some 
$\s>0$, $s>0$ and 
for some  $s_0=s_0(d,\alpha)\geq(d+2)/2$ large enough.
Assume furthermore that the potentials $V$ and 
$W$ are \emph{odd} in the variable
$x\in \mathbb{S}^{2}$. 
 There exists  $\eps_0>0$ such that, for any $0<\eps\leq \eps_0$ there is
 a set $\mathcal{O}_\eps\subset\mathcal{O}_0\subset\mathbb{R}^{d}$
 with 
 \begin{equation}\label{measure}
 {\rm meas}(\mathcal{O}_0\setminus\mathcal{O}_\eps)\leq \eps^{\delta}
 \end{equation}
 such that the following holds:

 \noindent
 for any $\omega\in \mathcal{O}_\e$
 there exists 
 a family  linear isomorphisms 
 $\Psi(\vphi)\in \mathcal{L}(H^{s}(\mathbb{S}^{2};\mathbb{C}))$,
 analytically 
 depending on 
 $\vphi\in \mathbb{T}^{d}_{\s/2}$ 
 and a Hermitian operator 
 $Z\in \mathcal{L}(H^{s}(\mathbb{S}^{2};\mathbb{C});
 H^{s+1-\alpha}(\mathbb{S}^{2};\mathbb{C}))$ commuting with the Laplacian and
satisfying 

 $\bullet$
 $\Psi(\vphi) $ is unitary on $L^{2}(\mathbb{S}^2;\mathbb{C})$;
 
 $\bullet$
  for any $0\leq s'\leq s$
 \begin{equation}\label{stima}
 \|\Psi(\vphi)-{\rm Id}\|_{\mathcal{L}(H^{s'}(\mathbb{S}^{2};\mathbb{C}))}+
  \|\Psi(\vphi)^{-1}-{\rm Id}\|_{\mathcal{L}(H^{s'}(\mathbb{S}^{2};\mathbb{C}))}
  \leq \e^{1-\delta}\,,
 \end{equation}
 
 $\bullet$
 the function $t\mapsto u(t,\cdot)\in H^{s'}(\mathbb{S}^{2};\mathbb{C})$
 solves \eqref{nls} if and only if the map
 $t\mapsto v(t,\cdot):=\Psi(\omega t)u(t,\cdot)$ solves the autonomous 
 equation
 \begin{equation}\label{NLSred}
 \ii \pa_t v=\Delta v+\eps Z(v)\,.
 \end{equation}
\end{theorem}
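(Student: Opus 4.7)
The plan is to run a block-diagonal KAM reducibility scheme in the eigenbasis of $\Delta$. Let $E_\ell=\mathrm{span}\{Y_\ell^m:-\ell\le m\le\ell\}$ denote the eigenspace of $-\Delta$ with eigenvalue $\lambda_\ell:=\ell(\ell+1)$ and let $\Pi_\ell$ be its orthogonal projector; every bounded operator $A$ splits as $A=A^{\mathrm D}+A^{\perp}$ with block-diagonal part $A^{\mathrm D}:=\sum_\ell\Pi_\ell A\Pi_\ell$. The goal is to construct, by an iterative composition of unitary transformations $e^{\ii S_n(\varphi)}$ with Hermitian, block-off-diagonal generators $S_n$, a quasi-periodic change of variables $\Psi(\varphi)$ that conjugates $H(\varphi):=\Delta+\varepsilon\bigl(W(\varphi,x)(-\ii\partial_\phi)^\alpha+V(\varphi,x)\bigr)$ to the autonomous operator $\Delta+\varepsilon Z$ with $Z$ block-diagonal (hence commuting with $\Delta$).

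The decisive structural ingredient, which allows the whole scheme to be carried out at the level of matrix elements with no pseudo-differential calculus, is a parity selection rule. Spherical harmonics carry parity $(-1)^\ell$ under $x\mapsto-x$ and multiplication by an odd function flips parity, while $-\ii\partial_\phi$ preserves each $E_\ell$; therefore the full perturbation has a non-zero matrix element from $E_\ell$ to $E_{\ell'}$ only when $\ell+\ell'$ is odd, which forces $\ell\ne\ell'$ and gives the cluster gap
$$|\lambda_\ell-\lambda_{\ell'}|=|\ell-\ell'|(\ell+\ell'+1)\geq \ell+\ell'+1.$$
Two consequences follow. First, the zeroth-order block-diagonal part of the perturbation vanishes identically, so $Z$ is produced only by the quadratic corrections of the KAM iteration, which is the mechanism that renders $Z$ smoothing of order $1-\alpha$. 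Second, since the eigenvalues of $-\ii\partial_\phi$ on $E_\ell$ are bounded by $\ell$, the operator $(-\ii\partial_\phi)^\alpha$ has norm $\lesssim \lambda_\ell^{\alpha/2}$ on $E_\ell$; combined with the cluster gap, this makes the homological equation invertible with a one-derivative gain that, together with $\alpha<1/2$, leaves a strictly positive net smoothing.

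At stage $n$ we have $H_n(\varphi)=\Delta+\varepsilon Z_n+R_n(\varphi)$ with $Z_n$ block-diagonal and $R_n$ of super-exponentially small size $\eps_n$ in a Sobolev--analytic norm on $\mathbb T^d_{\sigma_n}$, and we look for $S_n$ Hermitian, block-off-diagonal, solving
$$\omega\cdot\partial_\varphi S_n-\ii[\Delta+\varepsilon Z_n,S_n]=\ii R_n^{\perp}.$$
In Fourier modes $k$ and matrix elements between $E_\ell$ and $E_{\ell'}$ this reduces to dividing $\widehat{R_n^\perp}(k)_{a,b}$ by $\omega\cdot k+\mu_a^{(n)}-\mu_b^{(n)}$, where $\mu_a^{(n)}$ are the eigenvalues of $\Delta+\varepsilon Z_n$ inside the clusters (Lipschitz perturbations of $-\lambda_\ell$ by $O(\varepsilon)$ in $\omega$, via a Kato-type estimate). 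Imposing the first-Melnikov condition $|\omega\cdot k+\mu_a^{(n)}-\mu_b^{(n)}|\geq \gamma\langle k\rangle^{-\tau}$ on parameters, the parity rule keeps the cluster gap active so that only triples $(k,\ell,\ell')$ with $\ell+\ell'\lesssim\langle k\rangle$ can be resonant, which bounds the total excised measure by $C\gamma$. The conjugated operator $H_{n+1}=e^{\ii S_n}H_n e^{-\ii S_n}-\ii e^{\ii S_n}\partial_t e^{-\ii S_n}$ then has remainder $R_{n+1}$ quadratic in $R_n$, while $Z_{n+1}$ absorbs the new block-diagonal zero-mean contribution.

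The principal obstacle is the quantitative bookkeeping of regularity: the inversion gains one derivative from the cluster gap but the unboundedness of $(-\ii\partial_\phi)^\alpha$ costs $\alpha$, and this net surplus must be propagated through compositions, commutators and the shrinking of the analyticity strip $\sigma_n\downarrow\sigma/2$. The condition $\alpha<1/2$ is precisely what provides the margin for convergence of the scheme in a fixed mixed Sobolev--analytic norm. Once that is achieved, the infinite product $\Psi(\varphi):=\prod_n e^{\ii S_n(\varphi)}$ converges in $\mathcal L(H^{s'}(\mathbb S^2;\mathbb C))$ for every $0\le s'\le s$, is unitary on $L^2$ (since each $S_n$ is Hermitian), and satisfies \eqref{stima}; the limit $Z:=\lim_n Z_n$ is block-diagonal, hence commutes with $\Delta$, and lies in $\mathcal L(H^s;H^{s+1-\alpha})$ since the first non-trivial diagonal correction already enjoys that smoothing. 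Finally, the measure estimate \eqref{measure} follows by summing the excised Melnikov sets over $k\in\mathbb Z^d$ and over cluster pairs $(\ell,\ell')$ with $\ell+\ell'$ odd, and choosing $\gamma$ as an appropriate positive power of $\varepsilon$ depending on $\delta$.
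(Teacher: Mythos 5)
The broad strategy you outline — a block-diagonal KAM scheme in the spherical-harmonics basis, driven by the parity selection rule and the cluster gap $|\lambda_\ell-\lambda_{\ell'}|\ge \ell+\ell'+1$, with Hermitian generators $e^{\ii S_n}$ and second-Melnikov conditions for the small divisors — is the right one and matches the paper at the level of ingredients. But there is a real gap in how you propose to gain smoothing. You solve the full homological equation at each step with divisors $\omega\cdot k+\mu_a^{(n)}-\mu_b^{(n)}$ and assert that "the inversion gains one derivative from the cluster gap." That is only true when the cluster gap dominates $\omega\cdot k$, i.e.\ when $\ell+\ell'\gg |k|$. When $\ell+\ell'\lesssim K$ the divisor can be as small as $\gamma K^{-\tau}$, giving no derivative gain; and after the first step the remainder also acquires $\varphi$-dependent same-cluster blocks (since $[S_0,R_0]$ preserves parity even though $S_0$ and $R_0$ flip it), for which there is no cluster gap at all. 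So the one-derivative gain is not uniform, and without it the Lie commutators of an order-$\alpha$ remainder with a generator of the same order produce an order-$2\alpha$ term, so the unboundedness is not under control along the iteration.

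The paper avoids this by a preliminary one-step regularization (Proposition \ref{riduco}) before the KAM loop: it conjugates by $\mathcal T=e^{\mathcal A}$ with $\mathcal A_{[k]}^{[k']}(l)=\ii R_{[k]}^{[k']}(l)/(\lambda_k-\lambda_{k'})$, dividing only by the cluster gap and never by $\omega\cdot l+\lambda_k-\lambda_{k'}$. This uses the parity hypothesis (so the diagonal blocks vanish and this division is always legitimate) and gives a clean, uniform one-derivative gain with no small divisors. The price — the term $-\omega\cdot\partial_\varphi\mathcal A$ that would have been cancelled if one had used the full divisor — is itself of order $\alpha-1\le-\beta$, hence smoothing. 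After this single conjugation the remainder lies in $\mathcal M_{-\beta,s,\sigma}$ with $\beta=1-2\alpha>0$, and only then does the iterative scheme (Section \ref{sec:itero}) kick in: there the homological equation (Lemma \ref{omoequation}) does \emph{not} gain derivatives — it merely preserves the $-\beta$ smoothing, via the three-regime case analysis on cluster pairs. Your proposal collapses these two mechanisms into one step and therefore cannot deliver the uniform smoothing estimate it needs; separating them, as the paper does, is the essential idea you are missing. (A secondary point: the smoothing order obtained for $Z$ is $\beta=1-2\alpha$, as in Theorem \ref{thm:main2}; the $1-\alpha$ in the statement is the regularity surplus $\nu$ required of the data, not the gain of $Z$.)
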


As a consequence of our reducibility result, we get 
the following corollary concerning the solutions of \eqref{nls}. 
  \begin{corollary}\label{coro1.3}
 Assume that $V$ and $W$ satisfy the same assumptions than in Theorem \ref{thm:main}. Let $1\leq s'\leq s$ and let  
 $u_{0}\in H^{s'}(\mathbb{S}^{2};\mathbb{C})$. 
 Then there exists $\eps_{0}>0$ such that for all 
 $0<\eps<\eps_{0}$ and for all $\o \in \mathcal{O}_{\eps}$,  
 there exists a unique solution $u \in {C}^{0}\big(\R\,;\,H^{s'}\big)$ 
 of \eqref{nls} such that $u(0)=u_{0}$. 
 Moreover, $u$ is almost-periodic in time and satisfies
  \begin{equation}\label{16}
  (1-\eps C)\|u_{0}\|_{H^{s'}}\leq \|u(t)\|_{H^{s'}}\leq  (1+\eps C)\|u_{0}\|_{H^{s'}}, \quad \forall \,t\in \R,
  \end{equation}
  for some $C=C(s',s,d)$.
  \end{corollary}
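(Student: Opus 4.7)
The plan is to transport the Cauchy problem for \eqref{nls} to the reduced autonomous equation \eqref{NLSred} via the change of variable from Theorem \ref{thm:main}, solve it there by functional calculus, and pull back.

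Fix $u_0 \in H^{s'}$ with $1\leq s'\leq s$ and $\omega \in \mathcal{O}_\eps$, and set $v_0 := \Psi(0) u_0$, which lies in $H^{s'}$ by \eqref{stima}. Because the Hermitian operator $Z$ commutes with $\Delta$, it preserves each finite-dimensional space of spherical harmonics $\mathcal{H}_\ell$; hence $L := \Delta + \eps Z$ is self-adjoint on $L^{2}$ with pure point spectrum, diagonalizable in a joint orthonormal eigenbasis $\{\phi_n\}_{n\in\N}$ of $(\Delta,Z)$ obtained by picking, in each finite-dimensional $\mathcal{H}_\ell$, a basis of eigenvectors of $Z_{|\mathcal{H}_\ell}$. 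The unique solution of \eqref{NLSred} with initial datum $v_0$ is then
\[
v(t) = e^{-\ii t L} v_0 = \sum_n e^{-\ii t \mu_n} \langle v_0, \phi_n\rangle \phi_n,
\]
where $\mu_n\in\R$ are the eigenvalues of $L$. Since $L$ commutes with $\Delta$, $e^{-\ii t L}$ commutes with $(1-\Delta)^{s'/2}$ and is therefore an isometry of $H^{s'}$ for every $s' \geq 0$, so $\|v(t)\|_{H^{s'}} = \|v_0\|_{H^{s'}}$.

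Now define $u(t) := \Psi(\omega t)^{-1} v(t)$. By the third bullet of Theorem \ref{thm:main}, $u \in C^0(\R;H^{s'})$ solves \eqref{nls} with $u(0)=u_0$, yielding existence; continuity in $t$ follows from the strong continuity of $t\mapsto v(t)$ and the analyticity of $\vphi\mapsto\Psi(\vphi)^{-1}$ on $\mathbb{T}^d_{\s/2}$. Uniqueness comes from running the correspondence backward, as the Cauchy problem for \eqref{NLSred} has a unique solution in $C^0(\R;H^{s'})$. Chaining \eqref{stima} with the $H^{s'}$-isometry of the reduced flow gives
\[
\|u(t)\|_{H^{s'}} \leq \|\Psi(\omega t)^{-1}\|_{\mathcal{L}(H^{s'})}\,\|v_0\|_{H^{s'}} \leq (1+\eps^{1-\delta})^{2}\,\|u_0\|_{H^{s'}},
\]
and symmetrically from below, which yields \eqref{16} after absorbing the $\eps^{1-\delta}$ factor into $\eps C$ by shrinking $\eps_0$ (equivalently, adjusting the tolerance $\delta$). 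Almost-periodicity of $u$ is then immediate: $v(t)$ is Bohr almost-periodic as an $H^{s'}$-valued function because the series above converges uniformly in $t$ in the $H^{s'}$-norm (by Parseval applied to $v_0\in H^{s'}$), while $t\mapsto \Psi(\omega t)^{-1}$ is quasi-periodic in $t$ with frequencies $\omega_1,\dots,\omega_d$ and hence almost-periodic into $\mathcal{L}(H^{s'})$; the pointwise product remains almost-periodic into $H^{s'}$.

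The only genuinely nontrivial point in this argument is the fact that $L=\Delta+\eps Z$ generates a unitary group acting as an isometry on each $H^{s'}$, which rests entirely on the commutation $[Z,\Delta]=0$ provided by the theorem. Everything else is routine transport and norm chasing, so this is where I would concentrate care in writing the proof in detail.
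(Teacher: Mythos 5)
Your approach is the natural one and, apart from one point, it works: transport to the reduced autonomous equation via $\Psi$, solve that equation by functional calculus using that $Z$ commutes with $\Delta$ (so $e^{-\ii tL}$ is an isometry of every $H^{s'}$), and transport back; the almost-periodicity argument (uniform limit of trigonometric polynomials composed with the quasi-periodic factor $\Psi(\omega t)^{-1}$) is also fine. The point you single out at the end -- that $[Z,\Delta]=0$ is what makes $e^{-\ii tL}$ an $H^{s'}$-isometry -- is indeed the heart of the matter.

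The one step that does not survive scrutiny is the treatment of the constant in \eqref{16}. From \eqref{stima} you correctly obtain
\[
(1-\eps^{1-\delta})^{2}\,\|u_{0}\|_{H^{s'}}\leq \|u(t)\|_{H^{s'}}\leq (1+\eps^{1-\delta})^{2}\,\|u_{0}\|_{H^{s'}}\,,
\]
i.e.\ a bound of the form $(1\pm C\eps^{1-\delta})\|u_0\|_{H^{s'}}$. You cannot convert this into $(1\pm C\eps)\|u_0\|_{H^{s'}}$ with $C$ independent of $\eps$: shrinking $\eps_0$ does nothing because $\eps^{1-\delta}/\eps=\eps^{-\delta}\to\infty$ as $\eps\to0$ for fixed $\delta>0$, and sending $\delta\to0$ destroys the measure estimate \eqref{measure}. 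So the phrase ``absorbing the $\eps^{1-\delta}$ factor into $\eps C$'' is a genuine gap, not a routine adjustment. The honest conclusion from Theorem \ref{thm:main} as stated is $(1\pm C\eps^{1-\delta})$; if the corollary is to read $(1\pm C\eps)$ one would need a sharper version of \eqref{stima} (with $\eps$ in place of $\eps^{1-\delta}$), which the abstract Theorem \ref{thm:redu} with $\gamma=\eps^{\delta}$ does not deliver, since there $\epsilon=\gamma^{-1}O(\eps)=O(\eps^{1-\delta})$. State the exponent you actually get rather than asserting an absorption that cannot take place.

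Two smaller remarks. First, be a little more careful about what ``solution'' means: \eqref{NLSred} requires $v(t)\in D(\Delta)$ to make $\ii\pa_t v=\Delta v+\eps Zv$ literal, so you should either interpret solutions in the mild (Duhamel/semigroup) sense, for which uniqueness in $C^0(\R;H^{s'})$ is immediate, or remark that for $v_0\in H^{s'}$ the flow $e^{-\ii tL}v_0$ is the unique mild solution and that this is what is meant. Second, your argument works for all $0\leq s'\leq s$; the restriction $s'\geq1$ in the statement is there so that the original equation \eqref{nls} makes sense on $H^{s'}$ (the term $W(-\ii\partial_\phi)^{\alpha}u$ needs $u\in H^{\alpha}$), and it would be worth a one-line remark acknowledging this.
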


The study of the reducibility problem for Schr\"odinger equations 
 with quasi-periodic in time perturbation has been 
 very popular in recent years. 
 The first results adapting the KAM technics 
 were due to Kuksin \cite{Kuk87, Kuk93} 
 (see also \cite{Wayne,KuPo,Po2,BG, LY, Gtom}) 
 and concerned only one dimensional case. 
 More recently the technics 
were adapted to the higher dimensional case \cite{EK, EGK, GPatu, PP}. 
To consider unbounded perturbations, a new strategy has been developed in \cite{BBM14, KdVAut} using  the pseudo-differential calculus. 
 Without trying to be exhaustive 
we quote also \cite{FP1, BM1, BBHM, FGP} regarding KAM theory for 
quasi-linear PDEs in one space dimension.
This technics were successfully applied for reducibility problems in various case. 
For one dimensional linear equations with unbounded potential we quote
\cite{Bam17, Bam18, BM18, FGP1}.
In higher space dimensions
we refer to \cite{EK09, GP}
for bounded potential, and to
\cite{BGMR18, Mon18, FGMP, BLM18} for the unbounded cases.

In this paper we choose to present an intermediate result were pseudo-differential calculus is not required although the perturbation is unbounded. We believe that the simplicity of this paper justifies this choice.

\medskip

\noindent
{\bf Scheme of the proof.}
We now briefly describe the structure of the proof.
Some key points concern 
\begin{itemize}
\item[1)] the matrix representation of the multiplication operator 
 $u\mapsto bu$
by a function $b\in H^{s}(\mathbb{S}^{n};\mathbb{C})$;

\item[2)] the properties of the Laplace-Beltrami operator on $\mathbb{S}^{n}$;

\item[3)] a sufficiently accurate asymptotic expansion of the eigenvalues of the
linear  operator
in the right hand side of \eqref{nls}.
\end{itemize}
%

Regarding item $1)$,  the key property which is exploited is that
the product of two eigenfunctions is a finite linear combinations of them.
Hence the  rule of multiplications of the eigenfunctions 
implies  that the multiplication operator $u\mapsto bu$ 
can be represented, in the base of eigenfunctions,
as a block matrix with off-diagonal decay.
The block structure of this matrix is a consequence of the
multiplicity of the eigenvalues of $\Delta$ on $\mathbb{S}^{n}$.
For the analysis of these decay properties we refer to 
\cite{BP} and \cite{BCP} in which it is considered the more general case
of equations on Lie Group or on 
compact manifolds  which are homogenenous 
with respect to a compact Lie Group.
 In \cite{GP} the use of these decay-norms was not possible since in the case of the quantum harmonic operator we need to use specific dispersive properties of the eigenfunctions.

Concerning item $2)$, we strongly use the fact that
the eigenvalues
$\lambda_{k}$, $k\in \mathbb{N}$ of $\Delta$ (see \eqref{eigen})
satisfy a very strong ``separation property'' i.e.
\begin{equation}\label{sepProp}
|\lambda_{k}-\lambda_{k'}|\geq k+k'\,,\qquad \forall\, k,k'\in \mathbb{N}\,, k\neq k'\,.
\end{equation}
These property holds for the Laplace-Beltrami operator on $\mathbb{S}^{n}$
and more in general holds for compact manifolds  which are homogenenous 
with respect to a compact Lie Group of rank $1$. 
We remark that this property is not true for ``any'' homogeneous manifold. For 
instance, it is violated by the eigenvalues of $\Delta$ on the
torus $\mathbb{T}^{n}$, $n\geq2$,
which have the form $|j|^{2}$ with $j\in \mathbb{Z}^{d}$.
The separation property in \eqref{sepProp}
is deeply used in the preliminary regularization step
in section \ref{regu}.
In this step we also require an oddness hypothesis
on the multiplicative potential $W$, $V$.

To understand the use of 
item $3)$ we briefly discuss the difficulties related to 
reducibility in high space dimension.
We first recall that the Laplace 
operator $\Delta$ diagonalizes on the basis of
the \emph{spherical harmonics} of the sphere $\mathbb{S}^{n}$. We denote
by $E_{k}$ the eigenspace associated to the eigenvalues $\lambda_{k}$ 
(see \eqref{eigen}), $k\in \mathbb{N}$. 
It is also know that the dimension of $E_{k}$
grows to infinity as $k\to \infty$. 
We shall denote by $\Phi_{k,j}$, $j=1,\ldots,d_{k}:=\dim E_{k}$
an orthonormal basis of $E_{k}$. 
With this formalism, the matrix $A$, which represents
the operator $W(\o t,x)(-\ii\partial_\phi)^\alpha+V(\omega t,x)$ (see \eqref{nls})
in the basis $\Phi_{k,j}$, has the form
$A:=A(\omega t):=\Big(A_{[k]}^{[k']}\Big)_{k,k'\in\mathbb{N}}$
with blocks $A_{[k]}^{[k']}\in \mathcal{L}(E_{k'};E_{k})$.
The reducibility of \eqref{nls} rely on the reducibility of the operator $\Delta+\eps A$
which is divided into two steps.

The first one is to regularize the \eqref{nls} equation 
to a Schr\"odinger equation 
with a smoothing quasi-periodic in time perturbation.
This is the content of section \ref{regu}.
More precisely, using also the oddness assumption on the potentials,
we are able to show that
$i)$  the operator $\Delta+\eps A$
 can be conjugated to an operator of the form
\begin{equation}\label{asympexp2}
\Delta+\eps M\,,\qquad M : H^{s}(\mathbb{S}^{n};\mathbb{C})\to 
H^{s+1-2\alpha}(\mathbb{S}^{n};\mathbb{C})\,,
\end{equation}
and $ii)$
the 
eigenvalues of $\Delta+\eps M$ have the form
\begin{equation}\label{asympexp}
\Lambda_{k,j}\sim \lambda_{k}+O(\e k^{-(1-2\alpha)})\,, \qquad 
j=1,\ldots,d_{k}\,.
\end{equation}
We remark that, since $\alpha<1/2$, the matrix $M$ in \eqref{asympexp2} 
is a ``regularizing'' operator, and its eigenvalues in \eqref{asympexp} 
are very ``close'' to the unperturbed eigenvalues $\lambda_k$.

The second part of the proof 
consists in a quite standard KAM step following \cite{GP} or \cite{EK09}.
We note that in this second step we use the decay-norms 
introduced in \cite{BP} (see also \cite{BCP})  which provides a simpler algebraic framework.
A key point of a reducibility scheme is the resolution 
of the so called ``homological equation'', which relies on the invertibility of
an infinite dimensional matrix which is 
block diagonal with respect to the orthogonal splitting $L^{2}=\oplus_{k\in \mathbb{N}}E_{k}$ (see \eqref{orthosplitto}).
The fact that $\dim E_{k}\sim k^{n-1}$
makes hard the control of the inverse of such matrix, and could, in principle, creates
loss of regularity at each step of the iteration.
To overcome this problem we take advantages of the 
regularizing effect of the matrix $M$ to solve the homological equation
using a trick previously used in literature, see for instance \cite{GPatu, GP}.
We refer the reader to Lemma \ref{omoequation}
where the properties \eqref{asympexp2}, \eqref{asympexp} are used to prove 
suitable estimates on the solution of the homological equation
(see 
the
 bound \eqref{claimo}).
We remark that, in \cite{GP}, the regularizing effect of the perturbations
is proved by using special dispersive properties of the 
eigenfunctions which do not hold in our context.

It is also know that reducibility of a matrix $M$ (even in finite dimension)
requires some non-degeneracy conditions on differences of two eigenvalues, 
the so called
``second order Melnikov conditions''. 
More precisely
we shall prove that, for ``most'' parameters $\omega$, one has   lower bounds 
 of the form
\begin{equation}\label{secondMel}
|\omega\cdot l+\Lambda_{k,j}-\Lambda_{k',j'}|\geq \frac{\gamma}{|l|^{\tau}}\,,
 \quad l\in \mathbb{Z}^{d}\,,
 \;\;k,k'\in \mathbb{N}
\end{equation}
and $j=1,\ldots,d_{k}$, $j'=1,\ldots,d_{k'}$ (see \eqref{calO+} for more details).
In order to prove that the set 
of ``good'' parameters has large Lebesgue measure
it is fundamental to show that 
for any fixed $l\in \mathbb{Z}^{d}$, there are only \emph{finitely many}
indexes $k,k'$ such that the conditions \eqref{secondMel} are violated.
Since the asymptotic of the eigenvalues in \eqref{asympexp} is superlinear,
i.e. $\sim k^{\mathtt{d}}$ with $\mathtt{d}>1$,
it is quite easy 
to show that the \eqref{secondMel} are violated only if $k+k'\leq |l|$. 
The case $k=k'$ is more delicate and  the 
asymptotic \eqref{asympexp} play a fundamental role.
For more details we refer the reader to Lemma \ref{lem:misuro}.

\vspace{0.9em}
We note that the regularization of section \ref{regu} could be obtained by using a pseudo-differential calculus in the spirit of \cite{BBM14}. Actually in a subsequent paper we will extend our result using the regularization procedure developed in \cite{BGMR2}. We expect to generalized Theorem \ref{thm:main} to the case of a quasi-periodic in time perturbation of order less or equal than $1/2$.

\vspace{2em}
\noindent{\bf Acknowledgments.} 
The authors wish to thank M. Procesi for many useful discussions.


\section{Functional setting}

In this section we introduce the space of functions, sequences 
and linear operators we shall use along the paper.
We shall write $a\leq_{s} b$ to denote
$a\leq C b$ for some constant $C=C(s,d,n)$ depending only 
on $s,d,n$
(which are fixed parameters of the problem).

\subsection{Space of functions and sequences}
We denote by ${\mathcal{E}}:=\{\lambda_k\,, k\in \mathbb{N}\}$
with 
\begin{equation}\label{eigen}
\lambda_{k}:=k(k+n-1)\,, \quad k\in \mathbb{N}
\end{equation}
the spectrum of 
 $-\Delta$ where $\Delta$ is the Laplace-Beltrami operator on the
sphere $\mathbb{S}^{n}$ and let $E_{k}$ be the eigenspace
associated to $\lambda_{k}$. 
We have
\begin{equation}\label{dimension}
{\rm dim}E_{k}:=d_{k}\leq k^{n-1}\,.
\end{equation}
We denote by 
\begin{equation}\label{ortobase}
\Phi_{[k]}(x):=\{\Phi_{k,m}(x)\,, m=1,\ldots,d_k\}
\end{equation}
an orthonormal  
basis of $E_k$ 
so that any function $u\in L^{2}(\mathbb{S}^{n};\mathbb{C})$
can be written as
\begin{equation}\label{FouExp}
u(x)=\sum_{k\in \mathbb{N}}\sum_{m=1}^{d_k}z_{k,m}\Phi_{k,m}=\sum_{k\in \mathbb{N}}z_{[k]}\cdot\Phi_{[k]}(x)\,,\qquad z_{[k]}=(z_{k,1},\cdots,z_{k,d_k})\in \mathbb{C}^{d_{k}}\,,
\end{equation}
where $''\cdot ''$ denotes the usual scalar product in $\mathbb{R}^{d_{k}}$.
We denote 
by $\Pi_{E_k}$ the $L^{2}$-projector 
on the eigenspace $E_k$, i.e. 
\begin{equation}\label{proiettore}
(\Pi_{E_{k}}u)(x)=z_{[k]}\cdot \Phi_{[k]}(x)
\qquad \Rightarrow \qquad 
(-\Delta)\Pi_{E_k}u=\lambda_{k}\Pi_{E_k}u\,, \quad k\in\mathbb{N}\,.
\end{equation}

For $s\geq0$, we define the (Sobolev) scale of Hilbert sequence
spaces
\begin{equation}\label{spseq}
h^{s}:=\big\{ z=\{z_{[k]}\}_{k\in \mathbb{N}}\,, z_{[k]}\in 
\mathbb{C}^{d_{k}}\,:\,  \|z\|^{2}_{s}:=
\sum_{k\in \mathbb{N}} \langle k\rangle^{2s}\|z_{[k]}\|^{2}  <+\infty \big\}\,,
\end{equation}
where $\langle k\rangle:= \max\{1,|k|\}$ and $\|\cdot\|$ 
denotes
the $L^{2}(\mathbb{C}^{d_{k}})$-norm.
By a slight abuse of notation we define the operator $\Pi_{E_k}$ on sequences 
as
$\Pi_{E_k}z=z_{[k]}$ for any $  z\in h^{s}$
and $k\in \mathbb{N}$. \\
We note that 
$$H^s=H^s(\S^n,\C):=\{u(x)= \sum_{k\in \mathbb{N}}z_{[k]}\cdot\Phi_{[k]}(x)\mid z\in h^s\}$$
is the standard Sobolev space  and
$\|u\|_s:=\|z\|_s$ is equivalent to the standard Sobolev norm.
\begin{remark}
First of all notice that the weight $\langle k\rangle$ 
we use in the norm in 
\eqref{spseq} is related 
to the eigenvalues of the Laplace-Beltrami 
operator, indeed
\begin{equation}\label{equiweight}
c|k|\leq  \sqrt{\lambda_{k}}\leq C|k|
\end{equation}
for some suitable constants $0<c\leq C$. 
\end{remark}
In the paper we shall also deal with 
functions of the space-time  $u(\vphi,x)$ 
which can be expanded, using the standard Fourier theory, as
\begin{equation}\label{decompogo}
u(\vphi,x)=\sum_{\ell\in \mathbb{Z}^{d}, k\in \mathbb{N}}z_{[k]}(l)\cdot
 \Phi_{[k]}(x) e^{\ii l \cdot\vphi}\,, \qquad z_{[k]}(l)\in \mathbb{C}^{d_k}
\end{equation}
where $e^{\ii l\cdot\vphi} \Phi_{k,m}(x)$, 
$l\in \mathbb{Z}^{d}$, $k\in \mathbb{N}$, $m=1,\ldots,d_k$
is an orthogonal basis of 
$L^{2}(\mathbb{T}^{d}\times \mathbb{S}^{n};\mathbb{C})$.
For $p_1,s_1\geq0$ 
we define the space $H^{p_1}(\mathbb{T}_{\s}^{d}; 
H^{s_1}(\mathbb{S}^{n};\mathbb{C}))$
as the space of functions 
\[ 
\mathbb{T}_{\s}^{d}\ni\vphi\mapsto H^{s_1}(\mathbb{S};\mathbb{C})\,\;\;
{\rm analytic\;\; for }\; |{\rm Im}(\vphi)|<\s\,,\qquad
p_1-{\rm Sobolev}\;\; {\rm for } \;\; |{\rm Im}(\vphi)|=\s\,.
\]
We shall work with functions $u(\vphi,x)$ in the space 
$\mathcal{A}_{s,\s}$, $s\geq0$, $\s>0$, 
\begin{equation}\label{timefunc}
\mathcal{A}_{s,\s}:=\bigcap_{p_1+s_1=s}
H^{p_1}(\mathbb{T}_{\s}^{d}; 
H^{s_1}(\mathbb{S}^{n};\mathbb{C}))
\end{equation}
which we identify (using \eqref{decompogo}) with 
the  
space of sequence
\begin{equation}\label{timeseq}
\ell_{s,\s}:=\big\{
z=\{z_{[k]}(l)\}_{l\in \mathbb{Z}^{d},k\in\mathbb{N}}\,,\, z_{[k]}\in 
\mathbb{C}^{d_k}:\,
\|z\|_{s,\s}^{2}:=\sum_{l\in \mathbb{Z}^{d},k\in \mathbb{N}}
\langle l, k\rangle^{2s}e^{2|l|\s}\|z_{[k]}(l)\|^{2}< +\infty
\big\}
\end{equation}
and we endow the space $\mathcal{A}_{s,\s}$ with the norm
$\|u\|_{\mathcal{A}_{s,\s}}:=\|z\|_{s,\s}$.

\vspace{0.9em}
\noindent
{\bf Lipschitz norm.} Consider a compact subset $\mathcal{O}$ of
$\mathbb{R}^{d}$, $d\geq1$. 
For functions $f : \mathcal{O}\to E$, with $(E,\|\cdot\|_{E})$
some Banach space, we define 
the sup norm and the lipschitz semi-norm
as
\begin{equation}\label{suplip}
\begin{aligned}
\|f\|_{E}^{sup}:=\|f\|_{E}^{sup,\mathcal{O}}:=\sup_{\omega\in \mathcal{O}}
\|f(\omega)\|_{E}\,,\qquad
\|f\|_{E}^{lip}:=\|f\|_{E}^{lip,\mathcal{O}}:=\sup_{\substack{\omega_1,\omega_2\in \mathcal{O}\\ \omega_1\neq\omega_2}}\frac{\|f(\omega_1)-f(\omega_2)\|_{E}}{|\omega_1-\omega_2|}\,.
\end{aligned}
\end{equation}
For any $\gamma>0$ we introduce the weighted Lipschitz norms
\begin{equation}\label{Lip-norm}
\|f\|_{E}^{\gamma,\mathcal{O}}:=\|f\|_{E}^{sup,\mathcal{O}}
+\gamma \|f\|_{E}^{lip,\mathcal{O}}\,.
\end{equation}
In order to simplify the notation,
if $E=\ell_{s,\s}$ in \eqref{timeseq}, we shall write
\begin{equation}\label{Lip-norm2}
\|f\|_{\ell_{s,\s}}^{\gamma,\mathcal{O}}=:\|f\|_{s,\s}^{\gamma,\mathcal{O}}
=\|f\|_{s,\s}^{sup,\mathcal{O}}
+\gamma \|f\|_{s,\s}^{lip,\mathcal{O}}\,.
\end{equation}
%
%
We finally define the space of sequences 
\begin{equation}\label{spaseqLip}
\ell_{s,\s}^{\gamma,\mathcal{O}}:=\big\{
\mathcal{O}\ni\omega\mapsto z(\omega)\in\ell_{s,\s}\,:\, 
\|z\|_{s,\s}^{\gamma,\mathcal{O}}<+\infty
\big\}\,.
\end{equation}
We have the following Lemma.

\begin{lemma}\label{AlgebNorma}
For $s> (d+n)/2$, for any $z,v\in \ell_{s,\s}$ 
there is $C(s)>0$ such that

\noindent
(1) {Sobolev embedding:} $\|z\|_{L^{\infty}}\leq C(s)\|z\|_{s,\s}\,$;

\noindent
(2) {algebra:}  $\|zv\|_{s,\s}\leq C(s)\|z\|_{s,\s}\|v\|_{s,\s}$.


\noindent
(3) Setting, for $N>0$,  $\Pi_{N}z=\{z_a(l)\}_{|l|\leq N,a\in \mathcal{E}}$,
one has
\begin{equation}\label{smoothstim}
\begin{aligned}
\|({\rm Id}-\Pi_{N})z\|_{s,\s'}\leq \frac{C(s)e^{-(\s-\s')N}}{(\s-\s')^{d}}\|z\|_{s,\s}\,.
\end{aligned}
\end{equation}
Similar bounds holds also replacing $\|\cdot\|_{s,\s}$
with the norm $\|\cdot\|_{s,\s}^{\gamma,\mathcal{O}}$.
\end{lemma}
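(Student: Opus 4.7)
The plan is to prove all three items directly from the sequence-space definition of $\ell_{s,\s}$ in \eqref{timeseq} together with a single structural fact about spherical harmonics: the pointwise product of two eigenfunctions decomposes as a finite linear combination
$$\Phi_{k_1,m_1}(x)\,\Phi_{k_2,m_2}(x)=\sum_{k=|k_1-k_2|}^{k_1+k_2}\sum_{m=1}^{d_k} c^{k,m}_{k_1,m_1,k_2,m_2}\Phi_{k,m}(x),$$
with uniformly bounded Clebsch--Gordan-type coefficients (this is the same property highlighted as item $1)$ in the introduction and exploited in \cite{BP}). I would also use the standard addition formula $\sum_{m=1}^{d_k}|\Phi_{k,m}(x)|^2\leq C\,d_k\leq C\langle k\rangle^{n-1}$ uniformly in $x\in\mathbb{S}^n$.

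For (1), from \eqref{decompogo} and Cauchy--Schwarz in the indices $(l,k)$, one has
$$|u(\vphi,x)|\leq \sum_{l,k} e^{|l|\s}\,\|z_{[k]}(l)\|\,\|\Phi_{[k]}(x)\|\leq \|z\|_{s,\s}\Bigl(\sum_{l,k}\frac{\|\Phi_{[k]}(x)\|^{2}}{\langle l,k\rangle^{2s}}\Bigr)^{1/2},$$
and the addition formula bounds the $x$-dependent factor by $C\sum_{l,k}\langle k\rangle^{n-1}\langle l,k\rangle^{-2s}$, which converges iff $2s>d+n$; this is exactly the hypothesis. For (3), since $(\mathrm{Id}-\Pi_N)z$ is supported on $|l|>N$, I would factor $e^{2|l|\s'}=e^{2|l|\s}e^{-2|l|(\s-\s')}$ and pull the exponential decay outside, obtaining
$$\|(\mathrm{Id}-\Pi_N)z\|_{s,\s'}^{2}\leq \sup_{|l|>N}e^{-2|l|(\s-\s')}\|z\|_{s,\s}^{2};$$
the stated prefactor $(\s-\s')^{-d}$ appears when one uses the integrated bound $\sum_{|l|>N}e^{-2|l|(\s-\s')}\leq C(\s-\s')^{-d}e^{-2(\s-\s')N}$ (matching the standard smoothing form used later in the KAM scheme). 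The Lipschitz-weighted version \eqref{Lip-norm2} follows by applying the same estimate separately to $z(\omega_1)-z(\omega_2)$.

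The main obstacle is (2). Using the product formula above, the sequence of $zv$ is the discrete convolution
$$(zv)_{[k]}(l)=\sum_{l_1+l_2=l}\;\sum_{\substack{k_1,k_2\\ |k_1-k_2|\leq k\leq k_1+k_2}}P^{[k]}_{[k_1][k_2]}\bigl(z_{[k_1]}(l_1),\,v_{[k_2]}(l_2)\bigr),$$
with the bilinear maps $P$ uniformly bounded. The finite-range condition $k\leq k_1+k_2$ gives $\langle l,k\rangle\leq \langle l_1,k_1\rangle+\langle l_2,k_2\rangle$, so by symmetry $\langle l,k\rangle^{s}\lesssim \langle l_1,k_1\rangle^{s}+\langle l_2,k_2\rangle^{s}$; combined with $e^{|l|\s}\leq e^{|l_1|\s}e^{|l_2|\s}$ this splits the weighted norm into two symmetric terms. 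In each term one of the factors absorbs the full weight and the other is bounded in $\ell^{\infty}$ via the Sobolev embedding proved in (1), both of which use exactly the threshold $s>(d+n)/2$. A discrete Young / weighted Cauchy--Schwarz in the convolution indices then closes the estimate. The delicate point — and the reason I expect this step to require the most care — is that the finite-range relation $k\leq k_1+k_2$ is essential both to make the convolution well-defined and to justify the weight-splitting: this is precisely the place where the rank-one compact group structure of $\mathbb{S}^n$ enters the functional setting.
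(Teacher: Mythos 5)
The paper does not give a self-contained proof here: it delegates items $(1)$ and $(2)$ to Lemma~$2.13$ of \cite{BP} and dismisses $(3)$ as immediate from the definition. Your direct proofs of $(1)$ and $(3)$ are correct and more informative than the reference. For $(1)$, the Cauchy--Schwarz split together with the addition theorem $\sum_{m}|\Phi_{k,m}(x)|^{2}=d_{k}/|\mathbb{S}^{n}|\leq C\langle k\rangle^{n-1}$ reduces matters to the convergence of $\sum_{l,k}\langle k\rangle^{n-1}\langle l,k\rangle^{-2s}$, which holds exactly when $2s>d+n$; this is the right computation. For $(3)$, note that your initial sup-bound already gives $\|(\mathrm{Id}-\Pi_{N})z\|_{s,\s'}\leq e^{-(\s-\s')N}\|z\|_{s,\s}$, which is sharper than the statement; the prefactor $(\s-\s')^{-d}$ is only needed if one replaces the supremum over $|l|>N$ by a sum (as in the matrix-norm analogue, Lemma~\ref{DecayAlg}(iii)), so there is a slight inconsistency in invoking the integrated bound as though it were required, but no error.

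Item $(2)$ contains a genuine gap. The claim that the bilinear maps $P^{[k]}_{[k_1][k_2]}$, equivalently the Clebsch--Gordan-type coefficients $\int_{\mathbb{S}^{n}}\Phi_{k_1,m_1}\Phi_{k_2,m_2}\Phi_{k,m}$, are uniformly bounded is false for $n\geq 2$. The operator norm of $(f,g)\mapsto\Pi_{E_k}(fg)$ on $E_{k_1}\times E_{k_2}$ is only bounded by $\sup\{\|f\|_{L^{\infty}}: f\in E_{k_1},\ \|f\|_{L^{2}}=1\}\sim\sqrt{d_{k_1}}\sim\langle k_1\rangle^{(n-1)/2}$, which grows; a termwise discrete-Young estimate built on uniformly bounded kernels therefore does not close. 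Fortunately, the rest of your outline does not actually use this claim: after the weight splitting $\langle l,k\rangle^{s}\lesssim\langle l_1,k_1\rangle^{s}+\langle l_2,k_2\rangle^{s}$ (justified by the selection rule $|k_1-k_2|\leq k\leq k_1+k_2$ together with $l=l_1+l_2$) and the factorization $e^{|l|\s}\leq e^{|l_1|\s}e^{|l_2|\s}$, the correct way to estimate the unweighted factor is holistically, at the level of functions: $\sum_{k}\|\Pi_{E_k}(fg)\|_{L^{2}}^{2}=\|fg\|_{L^{2}}^{2}\leq\|f\|_{L^{\infty}}^{2}\|g\|_{L^{2}}^{2}$, with the $L^{\infty}$ factor controlled by item $(1)$, followed by Young in the $\vphi$-frequency $l$ alone. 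If you delete the false uniform-boundedness assertion and run the argument at the function level rather than through matrix entries, $(2)$ closes; this is essentially the content of the cited Lemma~$2.13$ of \cite{BP}.
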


\begin{proof}
Items $(1)$ and $(2)$ are classical estimates for Sobolev spaces, see for instance
Lemma $2.13$ in \cite{BP}.
\noindent
Item $(3)$ follows by the definition of the norm.
\end{proof}

\subsection{Linear operators}
According to the 
orthogonal splitting
\begin{equation}\label{orthosplitto}
L^{2}(\mathbb{S}^{n};\mathbb{C})=\bigoplus_{k\in \mathbb{N}} E_{k},
\end{equation}
we identify a linear operator 
acting on $L^{2}(\mathbb{S}^{n};\mathbb{C})$
with its matrix 
representation $A:=\Big(A_{[k]}^{[k']}\Big)_{k,k'\in\mathbb{N}}$
in $\mathcal{L}(h^{0})$ (recall \eqref{spseq})
with blocks $A_{[k]}^{[k']}\in \mathcal{L}(E_{k'};E_{k})$.
Notice that each block
$A_{[k]}^{[k']}$ is a $d_{k}\times d_{k'}$ matrix.

\noindent
{\bf Notation.} We shall write 
\begin{equation}\label{notazioneBlock}
A_{[k]}^{[k']}:=\Big(A_{k,j}^{k',j'}\Big)_{\substack{j=1,\ldots, d_{k},\\ j'=1,\ldots,d_{k'}}}\,.
\end{equation}
The action of the operator $A$ on functions $u(x)$ as in \eqref{FouExp} 
of the space variable
in $L^{2}(\mathbb{S}^{n};\mathbb{C})$ is given by
\[
(Au)(x)=\sum_{k\in \mathbb{N}}(Az)_{[k]}\cdot\Phi_{[k]}(x)\,,\qquad z_{[k]}\in \mathbb{C}^{d_{k}}\,,\qquad 
(Az)_{[k]}=\sum_{j\in \mathbb{N}}A_{[k]}^{[j]}z_{[j]}\,.
\]

\vspace{0.9em}
\noindent
{\bf Time-dependent matrices.}
In this paper we also consider 
 $\vphi$-dependent families of linear operators 
\begin{equation}\label{timeMat}
\mathbb{T}^{d}_{\s}\ni\vphi\mapsto A=A(\vphi)=\sum_{l\in\mathbb{Z}^{d}}A(l)e^{\ii l\cdot\vphi}\in \mathcal{L}(h^{0})
\end{equation}
where $A(l)\in \mathcal{L}(h^{0})$,
for any $ l\in \mathbb{Z}^{d}$\,.
We also regard $A$ as an operator acting on functions 
$u(\vphi,x)$ of space-time (see \eqref{timefunc})
as
\[
(Au)(\vphi,x)=(A(\vphi)u(\vphi,\cdot))(x)\,.
\]
More precisely, expanding $u$ as in \eqref{decompogo},
we have
\begin{equation}\label{azione}
(Au)(\vphi,x)=\sum_{l\in \mathbb{Z}^{d},k\in \mathbb{N}}(Az)_{[k]}(l)e^{\ii l\cdot\vphi}\Phi_{[k]}(x)\,,
\qquad (Az)_{[k]}(l)=\sum_{p\in \mathbb{Z}^{d},k'\in N}A_{[k]}^{[k']}(l-p)z_{[k']}(p)\,.
\end{equation}
On operators as in \eqref{timeMat} we define the following norm.

\begin{definition}{\bf ($(s,\s)$-decay norm)}\label{decayNorm}
We define the $(s,\s)$-decay norm of a matrix $A$  in \eqref{timeMat}
as
\begin{equation}\label{decayNorm2}
|A|_{s,\s}^{2}:=\sum_{l\in \mathbb{Z}^{d},h\in \mathbb{N}}\langle l,h\rangle^{2s}
e^{2|l|\s}\sup_{|k-k'|=h}\|A_{[k]}^{[k']}(l)\|^{2}_{\mathcal{L}(L^{2})}
\end{equation}
where $\|\cdot\|_{\mathcal{L}(L^{2})}$ is the $L^{2}$-operator norm 
in $\mathcal{L}(E_{k'},E_k)$.
We denote by $\mathcal{M}_{s,\s}$ the space of matrices of the form \eqref{timeMat} with finite $(s,\s)$-decay norm.

Consider a family 
$\mathcal{O}\ni\omega\mapsto A(\omega)\in \mathcal{M}_{s,\s}$
where $\mathcal{O}$ is a compact subset of $\mathbb{R}^{d}$, $d\geq 1$.
For $\gamma>0$ we define the Lipschitz decay norm
as
\begin{equation}\label{decayLip}
|A|^{\gamma,\mathcal{O}}_{s,\s}:=|A|^{sup,\mathcal{O}}_{s,\s}
+\gamma
|A|^{lip,\mathcal{O}}_{s,\s}=
\sup_{\omega\in \mathcal{O}}|A(\omega)|_{s,\s}+
\gamma
\sup_{\substack{\omega_1,\omega_2\in \mathcal{O}\\ 
\omega_1\neq\omega_2}}\frac{|A(\omega_1)-A(\omega_2)|_{s,\s}}{|\omega_1-\omega_2|}\,.
\end{equation}
We denote by $\mathcal{M}^{\gamma,\mathcal{O}}_{s,\s}$ the space of 
families of matrices $A(\omega)$
with finite  $|\cdot|_{s,\s}^{\gamma,\mathcal{O}}$-norm.
\end{definition}
For the properties of the $(s,\s)$-decay norm we refer the reader to
Lemma 
\ref{DecayAlg} in Appendix \ref{techtech}.
\begin{remark}
Notice that, if the $(s,\s)$-decay norm of a matrix $A$ is finite, then
\[
\|A_{[k]}^{[k']}\|_{\mathcal{L}(L^{2})}\leq C(s) |A|_{s,\s} \langle k-k'\rangle^{-s}\,.
\]
\end{remark}
We deal with a larger class of linear operators.
\begin{definition}
\label{1smooth}
Define the diagonal $\vphi$-independent operator  $\mathcal{D}$, 
acting on sequences $z\in \ell_{0,\s}$ (see \eqref{timeseq}), as 
(recall \eqref{eigen})
\begin{equation}\label{Diag}
\mathcal{D}z:=
{\rm diag}_{l\in \mathbb{Z}^{d}, k\in 
\mathbb{N}}\big(\lambda_{k}^{\frac{1}{2}}\big)z=
\big(\lambda_{k}^{\frac{1}{2}} z_{[k]}(l)\big)_{l\in \mathbb{Z}^{d}, k\in \mathbb{N}}\,.
\end{equation}
For $\beta\in \mathbb{R}$
we define the  norm $\bral \cdot \brar_{\beta,s,\s}$ 
of a matrix $A$  in \eqref{timeMat}
as
\begin{equation}\label{decayNorm3}
\bral A\brar_{\beta, s,\s}:=|\mathcal{D}^{-\beta}A|_{s,\s}
+|A\mathcal{D}^{-\beta}|_{s,\s}\,.
\end{equation}
We denote by $\mathcal{M}_{\beta,s,\s}$ the space of matrices of the form \eqref{timeMat} with finite $\bral \cdot\brar_{\beta,s,\s}$-norm.

Consider a family 
$\mathcal{O}\ni\omega\mapsto A(\omega)\in \mathcal{M}_{\beta,s,\s}$
where $\mathcal{O}$ is a compact subset of $\mathbb{R}^{d}$, $d\geq 1$.
For $\gamma>0$ we define the Lipschitz norm
as
\begin{equation}\label{decayLip33}
\begin{aligned}
\bral A\brar^{\gamma,\mathcal{O}}_{\beta,s,\s}&:=
\bral A\brar^{sup,\mathcal{O}}_{\beta,s,\s}+\gamma
\bral A\brar^{lip,\mathcal{O}}_{\beta,s,\s}
=
\sup_{\omega\in \mathcal{O}}\bral A(\omega)\brar_{\beta,s,\s}+
\gamma
\sup_{\substack{\omega_1,\omega_2\in \mathcal{O}\\ \omega_1\neq\omega_2}}\frac{\bral A(\omega_1)-A(\omega_2)\brar_{\beta,s,\s}}{|\omega_1-\omega_2|}\,.
\end{aligned}
\end{equation}
We denote by $\mathcal{M}^{\gamma,\mathcal{O}}_{\beta,s,\s}$ 
the space of 
families of matrices $A(\omega)$
with finite  $\bral \cdot\brar_{\beta,s,\s}^{\gamma,\mathcal{O}}$-norm.
If $\beta<0$ we say that $A\in \mathcal{M}^{\gamma,\mathcal{O}}_{\beta,s,\s}$
is a $\beta$-smoothing operator. 
If 
$A\in\mathcal{M}^{\gamma,\mathcal{O}}_{\beta,s,\s}$ 
does not depend on $\vphi$ we simply write 
$A\in \mathcal{M}^{\gamma,\mathcal{O}}_{\beta,s}$.
\end{definition}

 \begin{remark}\label{inclusioni}
 We have the following simple inclusions
 for $\beta'>\beta$ and $\nu_1,\nu_{2}\geq 0$:
 \[
 \mathcal{M}^{\gamma,\mathcal{O}}_{\beta,s,\s}\subset
 \mathcal{M}^{\gamma,\mathcal{O}}_{\beta',s,\s}\,,\qquad
  \mathcal{M}^{\gamma,\mathcal{O}}_{\beta,s+\nu_{1},\s+\nu_{2}}\subset
 \mathcal{M}^{\gamma,\mathcal{O}}_{\beta,s,\s}\,.
 \]
 The inclusions are continuous. For further properties of
 the operators of Def. \ref{1smooth} we refer to Appendix \ref{techtech}.
 \end{remark}

\subsection{Hamiltonian structure}
In this subsection we introduce a special class of linear operators.

\begin{definition}
\label{Hermo}
Consider a linear operator $M\in \mathcal{L}(h^{0})$
and a  family of maps 
$\vphi\mapsto A(\vphi)$ in $\mathcal{M}_{0,\s}$.

\noindent
$\bullet$ {\bf (Hermitian operators)}. We say that $M$ is  \emph{Hermitian} if
\begin{equation}\label{albero}
M_{[k]}^{[k']}=\Big(M_{k,m_k}^{k',m_{k'}}\Big)_{\substack{m_{k}=1,\ldots, d_{k} \\ 
m_{k'}=1,\ldots,d_{k'}}}\qquad{\rm is\,\; such\,\; that}\qquad 
M_{k,m_k}^{k',m_{k'}}=\ov{M_{k',m_k'}^{k,m_{k}}}
\end{equation}
for any $k,k'\in\mathbb{N}$.
To lighten the notation we shall also write that
$M_{[k]}^{[k']}=\ov{M_{[k']}^{[k]}}$ instead of the \eqref{albero}.
 We say that $A(\vphi)$ is Hermitian if and only if
\begin{equation*}
A_{[k]}^{[k']}(l)=\ov{A_{[k']}^{[k]}(-l)}\,, \qquad \forall \,l\in \mathbb{Z}^{d}\,,\; k,k'\in \mathbb{N}\,.
\end{equation*}

\noindent
$\bullet$ {\bf (Hamiltonian operators)}. We say that $M$ is  
\emph{Hamiltonian}
 if \,$\ii M$ is Hermitian. 
We say that  $A(\vphi)$
 is Hamiltonian if and only if
 \begin{equation}\label{albero3}
A_{[k]}^{[k']}(l)=-\ov{A_{[k']}^{[k]}(-l)}\,, 
\qquad \forall \,l\in \mathbb{Z}^{d}\,,\; 
k,k'\in \mathbb{N}\,.
\end{equation}

\noindent
$\bullet$ {\bf (Block-diagonal operators)}. 
We say that $A(\vphi)$
 is  \emph{block-diagonal}  if  and only if $A_{[k]}^{[k']}(\vphi)=0$ 
 for any $k\neq k'$ and any $\vphi\in \mathbb{T}^{d}_{\s}$.
\end{definition}

\begin{definition}{\bf (Normal form)}\label{normalform200}
We say that a matrix $M$ is in 
 \emph{normal form} 
 if it is $\vphi$-independent, Hermitian and block-diagonal 
 according to Definition \ref{Hermo}.
 Given a Hermitian  family of maps 
 $\vphi\mapsto A(\vphi)$ in $\mathcal{M}_{0,\s}$
 we define its normal form
${\rm Diag}A =\big(({\rm Diag}A)_{[k]}^{[k']}(l)
\big)_{l\in\mathbb{Z}^{d},k,k'\in\mathbb{N}}$
as
\begin{equation}\label{newNorm}
({\rm Diag}A)_{[k]}^{[k]}(0):= A_{[k]}^{[k]}(0)\,, 
\qquad ({\rm Diag}A)_{[k]}^{[k']}(l):=0\, \text{ for } l\neq0\,, k,k'\in \mathbb{N}\,,\quad
{\rm or }\quad l=0\,, k\neq k'\,.
\end{equation}
\end{definition}

\noindent
Let $\omega\cdot\pa_{\vphi}$ be the diagonal operator acting on sequences $z\in \ell_{0,\s}$ (see \eqref{timeseq}) defined by
\begin{equation}\label{omegaphi}
\omega\cdot\pa_{\vphi}z:={\rm diag}_{l\in \mathbb{Z}^{d}, k\in \mathbb{N}}(\ii \omega\cdot l)z=
(\ii \omega\cdot l z_{[k]}(l))_{l\in \mathbb{Z}^{d}, k\in \mathbb{N}}.
\end{equation}
This operator is Hamiltonian and thus 	an operator of the form $\omega\cdot\pa_{\vphi}+M(\vphi)$
is Hamiltonian 
 if and only if $M(\vphi)$ is Hamiltonian.

\vspace{0.9em}
\noindent
{\bf Conjugation under Hamiltonian flows.}
Consider the operator 
\begin{equation}\label{proto}
L(\vphi):=\omega\cdot\pa_{\vphi}+M\,,
\end{equation}
where  $\omega\cdot\pa_{\vphi}$ is defined in \eqref{omegaphi},
the operator $M=M(\vphi)\in \mathcal{M}_{0,\s}$ is Hamiltonian 
(see Def. \ref{Hermo}).
We shall study how the operator 
$L(\vphi)$ changes under the map 
\begin{equation}\label{exp}
\Phi:=e^{\ii A}:=\sum_{p\geq0}\frac{1}{p!}(\ii A)^{p}\,,\end{equation}
for some $A\in \mathcal{M}_{0,\s}$ Hermitian.
For the well-posedness of a map of the form \eqref{exp}
we refer to Lemma \ref{well-well} in Appendix \ref{techtech}.
By using Lie expansion the conjugate operator
$M^{+}=M^{+}(\vphi):=e^{\ii A}M(\vphi)e^{-\ii A}$
 has the form
\begin{equation}\label{lie1}
M^{+}=M^{+}(\vphi):=e^{\ii A}M(\vphi)e^{-\ii A}=\sum_{p\geq0}\frac{1}{p!}{\rm ad}^{p}_{\ii A}(M)\,,
\end{equation}
where
\begin{equation}\label{commu}
{\rm ad}^{0}_{\ii A}(M)=M\,,\quad {\rm ad}^{p}_{\ii A}(M)=
{\rm ad}^{p-1}_{\ii A}([\ii A, M])\,,\quad 
[\ii A, M]=\ii A M-\ii M A\,.
\end{equation}
Using the \eqref{lie1} we also deduce that (recall \eqref{omegaphi})
\begin{equation}\label{lie2}
e^{\ii A}\omega\cdot\pa_{\vphi}e^{-\ii A}=\omega\cdot\pa_{\vphi}+
\widetilde{M}^{+}(\vphi)=\omega\cdot\pa_{\vphi}
-\ii \omega\cdot\pa_{\vphi}A
-\sum_{p\geq2}\frac{1}{p!}{\rm ad}^{p-1}_{\ii A}
(\ii \omega\cdot\pa_{\vphi}A)\,.
\end{equation}

\begin{lemma}\label{conjHam}
If $M$ and $\ii A$  are Hamiltonian linear operators
 then
 $M^{+}$ and $\widetilde{M}^{+}$ in \eqref{lie1} and \eqref{lie2}
 are Hamiltonian.
\end{lemma}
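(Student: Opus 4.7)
My plan is to reformulate the Hermitian/Hamiltonian conditions of Definition \ref{Hermo} via a formal adjoint, so that the lemma reduces to one-line abstract manipulations. For $A\in\mathcal{M}_{0,\s}$ set
\[ (A^{\sharp})_{[k]}^{[k']}(l):=\overline{A_{[k']}^{[k]}(-l)}. \]
Reading off from the action formula \eqref{azione}, this is the formal $L^{2}(\mathbb{T}^{d}\times\mathbb{S}^{n};\mathbb{C})$-adjoint of $A$ regarded as an operator on space-time functions. With this notation $A$ is Hermitian iff $A^{\sharp}=A$ and Hamiltonian iff $A^{\sharp}=-A$, so the hypothesis ``$\ii A$ Hamiltonian'' says precisely $A^{\sharp}=A$. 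A routine index computation from the convolution rule for products yields $(\alpha A+\beta B)^{\sharp}=\bar\alpha A^{\sharp}+\bar\beta B^{\sharp}$, $(AB)^{\sharp}=B^{\sharp}A^{\sharp}$, $(A^{\sharp})^{\sharp}=A$, and the definition \eqref{omegaphi} gives directly $(\omega\cdot\pa_{\vphi})^{\sharp}=-\omega\cdot\pa_{\vphi}$, recovering the paper's remark that $\omega\cdot\pa_{\vphi}$ is Hamiltonian.

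Next, since obviously $|A^{\sharp}|_{s,\s}=|A|_{s,\s}$, the operation $\sharp$ commutes with the absolutely convergent Lie series \eqref{exp} (convergence being Lemma \ref{well-well}), so from $(\ii A)^{\sharp}=-\ii A$ one obtains $(e^{\ii A})^{\sharp}=e^{-\ii A}$, i.e.\ $e^{\ii A}$ is formally unitary: $(e^{\ii A})^{\sharp}e^{\ii A}=\mathrm{Id}$. Combining this with $M^{\sharp}=-M$,
\[ (M^{+})^{\sharp}=(e^{\ii A}M e^{-\ii A})^{\sharp}=(e^{-\ii A})^{\sharp}M^{\sharp}(e^{\ii A})^{\sharp}=e^{\ii A}(-M)e^{-\ii A}=-M^{+}, \]
so $M^{+}$ is Hamiltonian.

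For $\widetilde{M}^{+}$ I would rerun exactly the same computation with $\omega\cdot\pa_{\vphi}$ in the role of $M$: since $\omega\cdot\pa_{\vphi}$ is Hamiltonian and $e^{\ii A}$ is unitary, the operator $e^{\ii A}(\omega\cdot\pa_{\vphi})e^{-\ii A}$ is Hamiltonian, and by \eqref{lie2} it equals $\omega\cdot\pa_{\vphi}+\widetilde{M}^{+}$. Subtracting the Hamiltonian operator $\omega\cdot\pa_{\vphi}$ and using that Hamiltonian operators form a real linear subspace yields $\widetilde{M}^{+}$ Hamiltonian.

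The only step deserving attention is the commutation of $\sharp$ with the infinite series defining $e^{\pm\ii A}$, which is immediate from the norm identity above together with Lemma \ref{well-well}; I do not foresee any deeper obstacle.
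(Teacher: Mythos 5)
Your proof is correct, and it takes a genuinely different (and arguably cleaner) route from the paper's. The paper proceeds by a direct index computation: it writes out the block-entries of $[\ii A,M]$ using the convolution rule from \eqref{azione} and verifies by hand that they satisfy the Hamiltonian condition \eqref{albero3}, then does the same for $\ii\,\omega\cdot\pa_\vphi A$, after which the Lie series \eqref{lie1}, \eqref{lie2} are treated term by term. You instead package the Hermitian/Hamiltonian conditions of Definition~\ref{Hermo} as self-adjointness and skew-adjointness with respect to a formal $L^2(\mathbb{T}^d\times\mathbb{S}^n)$-adjoint $\sharp$, check once and for all that $\sharp$ is an antilinear anti-automorphism, and then the lemma collapses to the one-line algebra $(UBU^{-1})^\sharp = U B^\sharp U^{-1}$ together with the formal unitarity $(e^{\ii A})^\sharp = e^{-\ii A}$. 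What your approach buys is conceptual transparency: it makes the symplecticity remark that follows the lemma in the paper automatic rather than an afterthought, and it avoids writing any explicit block sums. What the paper's approach buys is that it never has to even informally discuss commuting $\sharp$ with an infinite series — it establishes the Hamiltonian property for each single commutator and then sums. Both arguments are formal in the same sense (the remark right after the lemma stresses that neither establishes boundedness), so your appeal to Lemma~\ref{well-well} for convergence is not really needed here and is slightly orthogonal to the point; but it does no harm. One small notational caveat worth keeping in mind: the bar in Definition~\ref{Hermo} is the block-level Hermitian transpose (transpose plus entrywise conjugate), not merely entrywise conjugation, so your identity $(AB)^\sharp = B^\sharp A^\sharp$ relies on that convention — with it, the verification goes through exactly as you describe.
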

\begin{proof}
To prove the lemma it is sufficient to check that 
$[\ii A,M]$ and $\ii \omega\cdot\pa_{\vphi}A$ are Hamiltonian.
We have that, for any $k,k'\in \mathbb{N}, l\in \mathbb{Z}^{d}$
\[
\big([\ii A,M]\big)_{[k]}^{[k']}(l)=\ii
\sum_{p\in\mathbb{Z}^{d},j\in\mathbb{N}}A_{[k]}^{[j]}(l-p)M_{[j]}^{[k']}(p)
-M_{[k]}^{[j]}(l-p)A_{[j]}^{[k']}(p)\,.
\]
Hence the claim follows using that $\ii A$ and $M$ are Hamiltonian, i.e. their coefficients satisfy \eqref{albero3}. Reasoning similarly one deduces the claim for $\ii \omega\cdot\pa_{\vphi}A$.
\end{proof}
Notice that in view of Lemma \ref{conjHam} the map  of the form \eqref{exp} with $A$ Hermitian is \emph{symplectic}.

\begin{remark}
Lemma \ref{conjHam} provides only a formal rule of conjugation of
matrices. It does not guarantees that
such conjugate is a bounded operator on the spaces
$\ell_{s,\s}$ with $s\gg1$. The key information is that
(at least formally) the flow $\Phi$ of a Hamiltonian operator (see \eqref{exp})
preserves the Hamiltonian structure, i.e. the map $\Phi$ is symplectic.
\end{remark}

\section{An abstract reducibility result and its application to 
(\ref{nls})}\label{S.main1}

In this section we state our main abstract result 
and we give some applications for the Schr\"odinger equation on spheres.
\subsection{Abstract reducibility result}\label{S.main}
Fix the parameters $s>(d+n)/2$, $\sigma>0$, $\gamma>0$ 
as in the previous sections and let us add three new parameters
\begin{equation}\label{parametrix}
0\leq \alpha<\frac{1}{2}\,,\quad \beta:=1-2\alpha>0\,,
\quad \nu\geq \alpha+\beta=1-\alpha\,.
\end{equation}
Consider (recall Def. \ref{decayNorm}, \ref{1smooth}) 
an operator of the form
\begin{equation}\label{start0}
G=G(\vphi)=G(\omega;\vphi):=
\omega\cdot\pa_{\vphi}
-\ii\mathcal{D}^{2}+R+R'\,, \qquad 
R\in \mathcal{M}^{\gamma,\mathcal{O}}_{\alpha,s+\nu,\sigma}\,,
R'\in \mathcal{M}^{\gamma,\mathcal{O}}_{-\beta,s,\sigma}
\end{equation}
where  $\omega\cdot\pa_{\vphi}$ and $\mathcal{D}$
are defined respectively in \eqref{omegaphi} and \eqref{Diag}
and $\mathcal{O}$ is a compact
subset of $\mathbb{R}^{d}$.
Assume also that $R$ and $R'$ are Hamiltonian according to Definition 
\ref{Hermo}
and that $R$ is diagonal free i.e.
\begin{equation}\label{ipoBlocco}
R_{[k]}^{[k]}(\vphi)=0\,,\qquad \forall\, k\in\mathbb{N}\,,
\quad \vphi\in\mathbb{Z}^{d}\,.
\end{equation}
We notice that $R$ is unbounded while $R'$ is smoothing.

\begin{theorem}{\bf (Reducibility)}\label{thm:redu}
Let $\gamma>0$. There exist  $\epsilon_0>0$ and $C>0$ depending only
on $s,d,n,\alpha$ such that, if 
\begin{equation}\label{pescerosso}
\epsilon:=\gamma^{-1}
(\bral R\brar_{\alpha,s+\nu,\s}^{\gamma,\mathcal{O}}
+\bral R'\brar_{-\b,s,\s}^{\mu,\mathcal{O}}) 
\qquad \text{ satisfies } 
\qquad \epsilon< \epsilon_0
\end{equation}
then the following holds.
There exist:

\vspace{0.5em}
\noindent
(i) {\bf (Cantor set)} A cantor set $\mathcal{O}_{\infty}\subset \mathcal{O}$
such that
\begin{equation}\label{measureEst}
{\rm meas}(\mathcal{O}\setminus\mathcal{O}_{\infty})\leq 
 C \gamma \, ;
\end{equation}

\noindent
(ii) {\bf (Normal form)}
an operator
$\mathcal{Z}\in 
\mathcal{M}^{\gamma,\mathcal{O}_{\infty}}_{-\beta,s}$ in 
normal form (see Def. \ref{normalform200})
satisfying
\[
\bral\mathcal{Z}\brar_{-\beta,s}^{\gamma,\mathcal{O}_{\infty}}
\leq C\epsilon\gamma\, ,
\]
and the eigenvalues of the block
$\mathcal{Z}_{[k]}^{[k]}$, denoted $\mu^{(\infty)}_{k,j}$, 
 $j=1,\ldots,d_{k}$, are  Lipschitz functions from $ \mathcal{O}$ into $\R$,
and satisfy
\begin{equation}\label{stimaeigen}
\sup_{\substack{k\in\mathbb{N}\\j=1,\cdots,d_k}}
\langle k\rangle^{\beta} 
|\mu^{(\infty)}_{k,j}|^{\gamma,\mathcal{O}}\leq
C\epsilon \gamma\,;
\end{equation}

\noindent
(iii) {\bf (Conjugacy)}  
 A Lipschitz family of invertible and symplectic maps 
$\Phi=\Phi(\omega) : \ell_{s,\s/4}^{\gamma, \mathcal{O}_{\infty}}
\to  \ell_{s,\s/4}^{\gamma, \mathcal{O}_{\infty}}$, 
of the form $\Phi={\rm Id}+\Psi$ satisfying 
\begin{align}
&\bral \Phi^{\pm1}
-{\rm Id}\brar_{-\beta,s,\s/4}^{\gamma,\mathcal{O}_{\infty}}
\leq C(s)\epsilon \,,\label{stimeTRA}\\
&\|\Phi^{\pm1}(\omega;\vphi)-{\rm Id}\|_{\mathcal{L}(h^{s}; h^{s+\beta})}\leq C(s,\s,\s')\epsilon\,\qquad \forall\; \om\in\mathcal{O}\,, \forall\; \vphi\in\mathbb{T}^{d}_{\s'}\,,\;\;\s'<\frac{\s}{4} 
\label{stimeTRA1000}
\end{align}
such that, for any $\o\in \mathcal{O}_{\infty}$,
\begin{equation}\label{coniugato}
L^{(\infty)}:=\Phi(\omega)\circ G\circ\Phi^{-1}(\omega)=\omega\cdot\pa_{\vphi}
-\ii \mathcal{D}^{2}-\ii \mathcal{Z}\,.
\end{equation}
\end{theorem}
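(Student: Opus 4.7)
The plan is a KAM reducibility iteration tailored to the order-$\a$ unbounded perturbation. At step $n$ I would carry an operator
\[
L_n = \om\cdot\pa_{\vphi} - \ii\mathcal{D}^2 - \ii \mathcal{Z}_n + Q_n,\qquad Q_n = R_n + R'_n,
\]
where $\mathcal{Z}_n\in\mathcal{M}^{\gamma,\mathcal{O}_n}_{-\b,s}$ is the accumulated block-diagonal normal form, $R_n$ is Hamiltonian, diagonal-free, of order $\a$, and $R'_n$ is Hamiltonian of order $-\b$. I would choose decreasing analyticity radii $\s_n\searrow \s/4$, shrinking Cantor sets $\mathcal{O}_n\supset\mathcal{O}_{n+1}$, geometrically growing ultraviolet cutoffs $N_n$, and a size $\eps_n$ controlling $\bral R_n\brar_{\a,s+\nu,\s_n}^{\gamma,\mathcal{O}_n}$ and $\bral R'_n\brar_{-\b,s,\s_n}^{\gamma,\mathcal{O}_n}$, with $\eps_0$ as in \eqref{pescerosso} and super-exponential rate $\eps_{n+1}\lesssim \eps_n^{3/2}$. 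The high-frequency tail $(\mathrm{Id}-\Pi_{N_n})Q_n$ is absorbed in $\eps_{n+1}$ via the analytic smoothing estimate \eqref{smoothstim} and a well-tuned choice of $N_n$.

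The core of the scheme is to solve at each step the homological equation
\[
\om\cdot\pa_{\vphi} A + \ii\,[\mathcal{D}^2+\mathcal{Z}_n,\,A] = -\ii\,\Pi_{N_n}\bigl(Q_n-\mathrm{Diag}(Q_n)\bigr)
\]
for a Hermitian $A$, and then to conjugate by the symplectic flow $\Phi_{n+1}=e^{\ii A}$. Block by block this amounts to dividing by the small divisor $\om\cdot l + \mu^{(n)}_{k,j}-\mu^{(n)}_{k',j'}$, which I would control by imposing the second-Melnikov conditions \eqref{secondMel} with $|l|\leq N_n$ — this defines $\mathcal{O}_{n+1}\subset\mathcal{O}_n$. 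On diagonal blocks ($k=k'$) the commutator with $\mathcal{D}^2$ vanishes and only a first-Melnikov $|\om\cdot l|\gtrsim \gamma\langle l\rangle^{-\tau}$ is required; since by \eqref{ipoBlocco} the block-diagonal of $R_n$ is zero, only $R'_n$ contributes here, and the solution inherits the $-\b$-smoothing class. On off-diagonal blocks ($k\neq k'$), the crucial ingredient is the separation property \eqref{sepProp} of the Laplace-Beltrami eigenvalues combined with the asymptotic $\mu^{(n)}_{k,j}\sim \lambda_k$: they give $|\om\cdot l + \mu^{(n)}_{k,j}-\mu^{(n)}_{k',j'}|\gtrsim k+k'$ whenever $k+k'\gg |l|$, producing an extra factor $(k+k')^{-1}\sim \mathcal{D}^{-1}$ in the solution. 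Since $R_n$ has order $\a$ and $\a-1 = -\b-\a \leq -\b$, the resulting $A$ sits in $\mathcal{M}^{\gamma,\mathcal{O}_{n+1}}_{-\b,s,\s_{n+1}}$ with
\[
\bral A\brar^{\gamma,\mathcal{O}_{n+1}}_{-\b,s,\s_{n+1}}\lesssim \gamma^{-1}(\s_n-\s_{n+1})^{-\tau-d}\,\eps_n.
\]

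The conjugate $L_{n+1}=e^{\ii A}L_n e^{-\ii A}$ is well defined on $\ell_{s,\s_{n+1}}$ by Lemma \ref{well-well} (because $A$ is bounded, in fact $-\b$-smoothing), preserves the Hamiltonian structure by Lemma \ref{conjHam}, and is computed through the Lie expansions \eqref{lie1}-\eqref{lie2}. Its linear part cancels the truncation $\Pi_{N_n}(Q_n-\mathrm{Diag}(Q_n))$ and contributes $\mathrm{Diag}(Q_n)=\mathrm{Diag}(R'_n)$ to $\mathcal{Z}_{n+1}=\mathcal{Z}_n+\mathrm{Diag}(R'_n)$, keeping the normal form $-\b$-smoothing. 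Its nonlinear part is the sum of quadratic commutators $\sum_{p\geq 1}\tfrac{1}{(p+1)!}\mathrm{ad}^{p}_{\ii A}(\cdots)$; exploiting the improved order $\a-1$ of $A$, each commutator with $R_n$ has order at most $(\a-1)+\a=-\b$, and each commutator with $R'_n$ or $\mathcal{Z}_n$ has order at most $-2\b\leq-\b$, so the new remainder splits again as $R_{n+1}+R'_{n+1}$ with size $O(\eps_n^2)$, using the algebra of the $\bral\cdot\brar_{\b,s,\s}$-norm established in Appendix \ref{techtech}.

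The main obstacle I expect is making these commutator estimates quantitative in the $\bral\cdot\brar_{\b,s,\s}$-norm with a loss of derivatives matching the parameter choice $\nu\geq \a+\b=1-\a$ in \eqref{parametrix}: one must carefully exploit both the off-diagonal block structure of $R_n$ and the dressings by $\mathcal{D}^{-\b}$ built into the norm, and show inductively that $R_{n+1}$ can still be taken diagonal-free, which is possible because any on-diagonal contribution produced by commutators is automatically of smoothing order and can be reassigned to $R'_{n+1}$. Once the iteration is closed, the limit conjugator $\Phi=\lim_{n} e^{\ii A_n}\circ\cdots\circ e^{\ii A_1}$ satisfies \eqref{stimeTRA}-\eqref{stimeTRA1000} by telescoping, the eigenvalues $\mu^{(\infty)}_{k,j}=\lim_n\mu^{(n)}_{k,j}$ of the blocks $\mathcal{Z}_{[k]}^{[k]}$ are Lipschitz in $\omega$ and verify \eqref{stimaeigen} by standard Hermitian perturbation, and the measure estimate \eqref{measureEst} follows from a standard summation: for each fixed $l\in\mathbb{Z}^d$ the second-Melnikov conditions exclude a set of measure $\lesssim\gamma\langle l\rangle^{-\tau-1}$ and by the super-linear asymptotic $\mu^{(n)}_{k,j}\sim k^2$ only pairs $(k,k')$ with $k+k'\lesssim|l|$ can violate them, making the total sum converge for $\tau$ large; the delicate case $k=k'$, $j\neq j'$ is handled as in Lemma \ref{lem:misuro}.
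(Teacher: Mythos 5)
Your proposal takes a genuinely different route from the paper. The paper first performs a \emph{one-shot regularization} (Proposition~\ref{riduco}): it conjugates $G$ by $\mathcal{T}=e^{\mathcal{A}}$ where the generator is defined explicitly, block by block, by $\mathcal{A}_{[k]}^{[k']}(l)=\ii R_{[k]}^{[k']}(l)/(\lambda_k-\lambda_{k'})$ for $k\neq k'$ (and $0$ for $k=k'$), i.e.\ one divides only by the Laplacian eigenvalue differences and not by $\omega\cdot l+\lambda_k-\lambda_{k'}$. Because the separation property $|\lambda_k-\lambda_{k'}|\geq k+k'$ gives the one-order gain uniformly in $l$, this step needs no Melnikov condition and no restriction on $\omega$; the commutator identity $R+[\mathcal{A},-\ii\mathcal{D}^2]=0$ kills the entire unbounded piece at once, and the leftover $-\omega\cdot\partial_\varphi\mathcal{A}$, $[\mathcal{A},R]$, etc.\ are of order $\leq-\beta$. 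After this, the KAM iteration in Section~\ref{sec:itero} deals exclusively with $-\beta$-smoothing remainders, and the homological-equation Lemma~\ref{omoequation} only needs to produce $S$ in the \emph{same} class $\mathcal{M}_{-\beta,\cdot}$ as $M$ (no order gain), which is precisely what Cases 1--3 of that proof establish. You instead fold the regularization into the KAM iteration, carrying the unbounded $R_n$ throughout and extracting the $(k+k')^{-1}$ gain directly from the full homological equation $\omega\cdot\partial_\varphi A+\ii[\mathcal{D}^2+\mathcal{Z}_n,A]=-\ii\Pi_{N_n}(Q_n-\mathrm{Diag}\,Q_n)$.

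Your scheme is workable in principle, but you should be aware that it requires a strictly stronger homological-equation estimate than the paper's Lemma~\ref{omoequation}. That lemma, in its Case 2 (both $k,k'$ large, say $k\neq k'$), contents itself with the Melnikov bound $|{-\omega\cdot l+\lambda_k-\lambda_{k'}}|\geq\gamma K^{-\tau}$ and delivers $S$ in the same class as $M$ — no order gain. For your $A$ to land in $\mathcal{M}_{\alpha-1,\cdot}$ (so that $[A,R_n]$, $[A,R'_n]$, and $\omega\cdot\partial_\varphi A$ all drop to order $\leq-\beta$ and the remainder closes) you must re-do Case 2 for $k\neq k'$ using the separation bound $|\lambda_k-\lambda_{k'}|-|\omega\cdot l|\gtrsim k+k'$, and in the bounded region $k,k'\lesssim K$ one has to absorb the extra weight $\langle k\rangle^{1-\alpha}\lesssim K^{1-\alpha}$ into the admissible $K$-power loss. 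Your write-up asserts the gain (``the resulting $A$ sits in $\mathcal{M}_{-\beta,\ldots}$'') rather than proving it, and the claimed estimate with factor $(\sigma_n-\sigma_{n+1})^{-\tau-d}$ hides this $K^{1-\alpha}$ contribution and the block-dimension factor $K_2^{n/2}$ of the paper's \eqref{claimo}. In addition, you must track two remainder classes ($R_n$ of order $\alpha$ but exponentially small in norm, $R'_n$ of order $-\beta$) and verify inductively that $R_{n+1}$ stays diagonal-free — you correctly observe this, since the tail $(\mathrm{Id}-\Pi_{N_n})R_n$ preserves it and the commutators are all smoothing. Also note a small slip: for $k=k'$, $j\neq j'$ the relevant bound is still a second-Melnikov condition $|\omega\cdot l+\mu_{k,j}-\mu_{k,j'}|\geq\gamma K^{-\tau}$ (as in \eqref{calO+}), not a pure first-Melnikov $|\omega\cdot l|\gtrsim\gamma\langle l\rangle^{-\tau}$, because $\mu_{k,j}-\mu_{k,j'}$ need not be negligible for moderate $k$. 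In sum: your strategy is correct but costs more bookkeeping than the paper's and re-derives, inside every KAM step, an order gain the paper obtains once and for all, algebraically, in Proposition~\ref{riduco}.
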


Theorem \ref{thm:main} will be proved in sections \ref{regu}, \ref{sec:itero}

\subsection{Application to (\ref{nls}) on the sphere}\label{AppliNLS}

In this section we consider a more general setting than in introduction. In fact we consider the Schr\"odinger equation
\[\label{nls2}\tag{LS2}
\ii \pa_{t}u=\Delta u+\eps\big( \Rc(\o t,x)+\Rc'(\omega t,x)\big)u\,,
\qquad u=u(t,x)\,,
\quad t\in \mathbb{R}\,,\quad 
x\in \mathbb{S}^{n}\,,\quad n\geq1\,,
\]
where $\Delta$ denotes the Laplace-Beltrami operator on 
$\mathbb{S}^{n}$ and $\Rc$ and $\Rc'$
 are time-dependent families of linear operators corresponding, 
 in their matrix representation with respect 
 to the spherical harmonics basis, 
 to Hamiltonian matrices 
 $R\in \mathcal{M}^{\g,\mathcal{O}_0}_{\alpha,s+\nu,\sigma}$, 
 $R'\in \mathcal{M}^{\g,\mathcal{O}_0}_{-\beta,s,\sigma}$ 
 with $R$ diagonal free   as in \eqref{parametrix}, \eqref{start0}, 
 \eqref{ipoBlocco}. Let us  choose 
 $\gamma=\eps^\delta$ for some $0<\delta<1$. 
 The assumption \eqref{pescerosso} reads 
 $\eps<\eps_0$ with 
 $\eps_0^{1-\delta}=(\bral R\brar_{\alpha,s+\nu,\s}^{\g,\mathcal{O}_0}
 +\bral R'\brar_{-\b,s,\s}^{\g,\mathcal{O}_0} )^{-1}\epsilon_0$.
  So we have the following.
\begin{theorem}\label{thm:main2}  Let $0<\delta<1$, 
$0\leq \alpha<1/2$ and  $s>(d+n)/2$. 
There exists  $\eps_0>0$ such that, for any $0<\eps\leq \eps_0$ there is
 a set $\mathcal{O}_\eps\subset\mathcal{O}_0\subset\mathbb{R}^{d}$
 with 
 \begin{equation}\label{measureSec3}
 {\rm meas}(\mathcal{O}_0\setminus\mathcal{O}_\eps)\leq \eps^{\delta}
 \end{equation}
 such that the following holds.
 
 \noindent
 For any $\omega\in \mathcal{O}_\e$
 there exist a family of  linear isomorphisms 
 $\Psi(\vphi)\in \mathcal{L}(H^{s}(\mathbb{S}^{n};\mathbb{C}))$,
 analytically 
 depending on $\vphi\in \mathbb{T}^{d}_{\s/2}$ 
 and a block diagonal Hermitian operator $Z\in \mathcal{L}(H^{s}(\mathbb{S}^{n};\mathbb{C});
 H^{s+\beta}(\mathbb{S}^{n};\mathbb{C}))$
satisfying 

$\bullet$ $\Psi(\vphi) $ is unitary on $L^{2}(\mathbb{S}^n;\mathbb{C})$;
 
 $\bullet$ for any $0\leq s'\leq s$
 \begin{equation}\label{stimasec3}
 \|\Psi(\vphi)-{\rm Id}\|_{\mathcal{L}(H^{s'},H^{s'+\beta})}+
  \|\Psi(\vphi)^{-1}-{\rm Id}\|_{\mathcal{L}(H^{s'},H^{s'+\beta})}
  \leq \e^{1-\delta}\,,
 \end{equation}
 
 $\bullet$
 the function $t\mapsto u(t,\cdot)\in H^{s'}(\mathbb{S}^{n};\mathbb{C})$
 solves \eqref{nls2} if and only if the map
 $t\mapsto v(t,\cdot):=\Psi(\omega t)u(t,\cdot)$ solves the autonomous 
 equation
 \begin{equation}\label{NLSredsec3}
 \ii \pa_t v=\Delta v+\eps Z(v)\,.
 \end{equation}
\end{theorem}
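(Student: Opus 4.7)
The plan is to recast \eqref{nls2} as an operator equation of the form \eqref{start0} on the $(\vphi,x)$-space and then invoke the abstract reducibility Theorem \ref{thm:redu} directly. Working in the spherical harmonic basis, so that $\Rc,\Rc'$ are identified with their matrix representations, and searching for a solution in the quasi-periodic ansatz $u(t,x)=U(\omega t,x)$, equation \eqref{nls2} is equivalent to $GU=0$ with (using $\Delta=-\mathcal{D}^{2}$, see \eqref{Diag})
\[
G:=\omega\cdot\pa_\vphi-\ii\mathcal{D}^{2}+\ii\eps\,\Rc+\ii\eps\,\Rc'\,.
\]
In the notation of \eqref{start0} this corresponds to $R:=\ii\eps\,\Rc$ and $R':=\ii\eps\,\Rc'$. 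These are Hamiltonian in the sense of Definition \ref{Hermo} since $\Rc,\Rc'$ are self-adjoint, $R$ is diagonal free by the hypothesis on $\Rc$, and by linearity of the decay norms in $\eps$ one has
\[
\bral R\brar^{\g,\mathcal{O}_{0}}_{\alpha,s+\nu,\s}+\bral R'\brar^{\g,\mathcal{O}_{0}}_{-\beta,s,\s}\leq \mathfrak{C}\,\eps
\]
for a constant $\mathfrak{C}$ depending only on the fixed data.

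Next I set $\g:=\eps^{\delta}$, so that the effective smallness parameter of Theorem \ref{thm:redu} satisfies
\[
\g^{-1}\bigl(\bral R\brar^{\g,\mathcal{O}_{0}}_{\alpha,s+\nu,\s}+\bral R'\brar^{\g,\mathcal{O}_{0}}_{-\beta,s,\s}\bigr)\leq \mathfrak{C}\,\eps^{1-\delta}<\epsilon_{0}
\]
for every $0<\eps\leq\eps_{0}$, provided $\eps_{0}$ is small enough. Theorem \ref{thm:redu} then supplies a Cantor set $\mathcal{O}_{\infty}\subset\mathcal{O}_{0}$ with $\mathrm{meas}(\mathcal{O}_{0}\setminus\mathcal{O}_{\infty})\leq C\g=C\eps^{\delta}$ (shrinking $\eps_{0}$ once more reabsorbs $C$ and yields \eqref{measureSec3} with $\mathcal{O}_{\eps}:=\mathcal{O}_{\infty}$), a Hermitian block-diagonal operator $\mathcal{Z}$ with $\bral\mathcal{Z}\brar^{\g,\mathcal{O}_{\infty}}_{-\beta,s}\leq C\epsilon\g\lesssim\eps$, and a symplectic family $\Phi(\omega;\vphi)$ obeying \eqref{stimeTRA}--\eqref{stimeTRA1000} which conjugates $G$ to $\omega\cdot\pa_\vphi-\ii\mathcal{D}^{2}-\ii\mathcal{Z}$ on $\mathcal{O}_{\infty}$.

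It remains to translate this conjugacy back into physical variables. I define $\Psi(\vphi):=\Phi(\omega;\vphi)$ and $Z:=-\mathcal{Z}/\eps$. Unitarity of $\Psi$ on $L^{2}(\S^{n})$ is a by-product of the construction of $\Phi$ in the KAM scheme as an infinite composition of exponentials $e^{\ii A_{j}}$ with $A_{j}$ Hermitian (cf.\ \eqref{exp} and Lemma \ref{conjHam}): each factor is unitary, and the product converges in operator norm thanks to \eqref{stimeTRA}. The bound \eqref{stimasec3} is exactly \eqref{stimeTRA1000} after substituting $C\epsilon\lesssim\eps^{1-\delta}$. For the conjugacy, if $u$ solves \eqref{nls2} then $V:=\Phi U$ satisfies $(\omega\cdot\pa_\vphi-\ii\mathcal{D}^{2}-\ii\mathcal{Z})V=0$, so $v(t,\cdot):=\Psi(\omega t)u(t,\cdot)$ solves $\ii\pa_{t}v=\Delta v-\mathcal{Z}v=\Delta v+\eps Zv$, which is \eqref{NLSredsec3}. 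The regularizing, Hermitian and block-diagonal properties of $Z$ (hence its commutation with $\Delta$) descend from $\mathcal{Z}\in\mathcal{M}_{-\beta,s}$ being in normal form (Definition \ref{normalform200}).

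The entire analytical content of Theorem \ref{thm:main2} is thus packaged into Theorem \ref{thm:redu}, so the present deduction is essentially bookkeeping; the only care required is the two-step choice of $\eps_{0}$ above (first to meet \eqref{pescerosso}, then to absorb the constant in the measure estimate). The real obstacle lies in proving the abstract Theorem \ref{thm:redu} itself, namely in the preliminary regularization of Section \ref{regu}, where the oddness hypothesis on the potentials and the strong separation property \eqref{sepProp} must be combined to produce the smoothing remainder with the eigenvalue asymptotic \eqref{asympexp}, and in the ensuing KAM iteration of Section \ref{sec:itero}, whose critical step is the verification of the second order Melnikov conditions \eqref{secondMel} in a block-matrix setting where $\dim E_{k}\sim k^{n-1}$.
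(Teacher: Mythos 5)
Your proposal is correct and follows essentially the same route as the paper: the paper's entire ``proof'' of Theorem~\ref{thm:main2} consists of the short paragraph preceding the statement, in which $\gamma=\eps^{\delta}$ is chosen so that the smallness hypothesis \eqref{pescerosso} of Theorem~\ref{thm:redu} reads $\eps<\eps_0$, after which the abstract theorem is invoked; your write-up simply expands that bookkeeping (identification of $G$, $R=\ii\eps\Rc$, $R'=\ii\eps\Rc'$, $\Psi=\Phi$, $Z=-\mathcal{Z}/\eps$, and the sign check that sends $L^{(\infty)}V=0$ back to \eqref{NLSredsec3}). Two small remarks: the claim that ``shrinking $\eps_0$ once more reabsorbs $C$'' in the measure estimate is not literally true as stated -- the clean way is either to apply Theorem~\ref{thm:redu} with $\gamma=\eps^{\delta'}$ for some $\delta<\delta'<1$ or to choose $\gamma=\eps^{\delta}/C$, both of which still satisfy the smallness condition; and \eqref{stimeTRA1000} is stated only at the top regularity index $s$, so to obtain \eqref{stimasec3} for all $0\le s'\le s$ one should invoke the fact that a matrix with finite $(s,\s)$-decay norm acts boundedly on $h^{s'}$ for every $s'\le s$ (as in Lemmas~\ref{DecayAlg} and \ref{AzioneDec}). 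Both of these points are also glossed over in the paper, so they do not affect the comparison.
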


Now it remains to give examples of $\Rc$ and $\Rc'$ that satisfy the right hypothesis. In particular, we need to make sure that \eqref{nls} is in the right framework in such a way Theorem \ref{thm:main} holds true.

 First we verify that a multiplicative potential is an admissible perturbation. 
 \begin{lemma}\label{potok}
Assume that $\vphi\mapsto V(\vphi,\cdot)\in 
H^{s+s_0}(\mathbb{S}^{n};\mathbb{R})$  
analytically extends to $\mathbb{T}^{d}_{\s}$ for some $\s>0$
and with $s_0=s_0(n)$.
Then the matrix that represents the 
multiplication operator by $V$ 
and still denoted by $V$ 
belongs to $\mathcal{M}_{s,\sigma'}$ for any $0<\s'<\s$.
Furthermore if  $V$ is an odd 
function in the space variable then $V$ is diagonal free:
\begin{equation}\label{calo}
V_{[k]}^{[k]}=0\quad  k\in\N\,.
\end{equation}
 \end{lemma}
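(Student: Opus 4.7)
The strategy is to combine two classical facts: the exponential decay of Fourier coefficients of analytic maps, and the fact that on $\mathbb{S}^n$ a product of two spherical harmonics is a \emph{finite} linear combination of spherical harmonics (Clebsch--Gordan type rule). First I would expand in the angular variable
\[
V(\vphi,x)=\sum_{l\in\mathbb{Z}^d}V_l(x)e^{\ii l\cdot\vphi},
\]
so that analyticity on $\mathbb{T}^d_\s$ yields $\|V_l\|_{H^{s+s_0}(\mathbb{S}^n)}\leq M e^{-|l|\s}$ with $M:=\sup_{|\mathrm{Im}\,\vphi|<\s}\|V(\vphi,\cdot)\|_{H^{s+s_0}}$. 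Denoting by $V(l)$ the $l$-th time Fourier coefficient of the multiplication operator, I need to bound the operator norm of each block $V_{[k]}^{[k']}(l)\in\mathcal{L}(E_{k'},E_k)$ and then sum up.

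The block decay comes from decomposing $V_l=\sum_{m\geq 0}\Pi_{E_m}V_l$ in spherical harmonics. Since the pointwise product of a harmonic in $E_m$ and one in $E_{k'}$ lies in $\bigoplus_{|m-k'|\leq k\leq m+k'}E_k$, one has
\[
V_{[k]}^{[k']}(l)=\sum_{m\geq|k-k'|}\Pi_{E_k}\circ M_{\Pi_{E_m}V_l}\circ \Pi_{E_{k'}},
\]
and hence $\|V_{[k]}^{[k']}(l)\|_{\mathcal{L}(L^2)}\leq \sum_{m\geq|k-k'|}\|\Pi_{E_m}V_l\|_{L^\infty}$. Using Sobolev embedding $H^{\rho}(\mathbb{S}^n)\hookrightarrow L^\infty$ for $\rho>n/2$ together with $\|\Pi_{E_m}f\|_{H^\rho}=\langle m\rangle^\rho\|\Pi_{E_m}f\|_{L^2}$ and a Cauchy--Schwarz argument, one obtains, for $s_0=s_0(n)$ sufficiently large (say $s_0>n/2+1/2$ plus a safety margin),
\[
\sup_{|k-k'|=h}\|V_{[k]}^{[k']}(l)\|_{\mathcal{L}(L^2)}\leq C(s,n)\,\langle h\rangle^{-(s+1)}\|V_l\|_{H^{s+s_0}}.
\]

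Plugging this into the definition of the $(s,\s')$-decay norm \eqref{decayNorm2}, using $\langle l,h\rangle^{2s}\leq \langle l\rangle^{2s}\langle h\rangle^{2s}$, then
\[
|V|_{s,\s'}^2\leq C\sum_{l\in\mathbb{Z}^d}\langle l\rangle^{2s}e^{2|l|\s'}\|V_l\|_{H^{s+s_0}}^2\sum_{h\geq 0}\langle h\rangle^{-2}\leq C M^2\sum_{l\in\mathbb{Z}^d}\langle l\rangle^{2s}e^{-2|l|(\s-\s')},
\]
which is finite for every $0<\s'<\s$. This proves $V\in\mathcal{M}_{s,\s'}$. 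The main technical point, which I expect to be the most delicate bookkeeping step, is the precise choice of $s_0$ guaranteeing both the Sobolev embedding on each eigenspace and the convergence of the two nested series; but it depends only on $n$ as stated.

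For the diagonal-free statement, I would use parity. On $\mathbb{S}^n$ the antipodal map is an isometry and restricted spherical harmonics in $E_k$ satisfy $\Phi_{k,j}(-x)=(-1)^k\Phi_{k,j}(x)$. Consequently the entries of the diagonal block
\[
V_{k,j}^{k,j'}(l)=\int_{\mathbb{S}^n}V_l(x)\,\Phi_{k,j}(x)\overline{\Phi_{k,j'}(x)}\,dx
\]
have an integrand of total parity $(-1)\cdot(-1)^{2k}=-1$ when $V$ is odd in $x$, so the integral vanishes identically for each $l$ and each $k$, yielding $V_{[k]}^{[k]}=0$ as in \eqref{calo}.
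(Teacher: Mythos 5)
Your proof is correct. For the second part (diagonal-free when $V$ is odd) you give exactly the parity argument the paper uses: the spherical harmonics in $E_k$ have parity $(-1)^k$, so the integrand $V_l\,\Phi_{k,j}\,\Phi_{k,j'}$ is odd and the diagonal entries vanish. The paper in fact proves the slightly stronger statement $V_{[k]}^{[k']}=0$ whenever $k+k'$ is even, which you could also read off from your computation at no extra cost.

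For the first part ($V\in\mathcal{M}_{s,\s'}$), the paper does not give a proof at all: it simply cites Proposition~2.19 of~\cite{BP} (and Lemma~3.1 of~\cite{BCP}). You instead give a self-contained argument, and it is based on exactly the mechanism these references use and that the introduction of the paper advertises for item~(1): the Clebsch--Gordan type rule that a product $\Pi_{E_m}V_l\cdot\Phi_{k',\ell}$ only excites the modes $|m-k'|\leq k\leq m+k'$, so the block $V_{[k]}^{[k']}(l)$ picks up only contributions from $m\geq|k-k'|$. One small remark: the relevant sum is in fact \emph{finite}, $|k-k'|\leq m\leq k+k'$, which you replace by the one-sided constraint $m\geq|k-k'|$; this is harmless since you are bounding from above. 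Combined with Sobolev embedding $H^\rho\hookrightarrow L^\infty$, the weight $\|\Pi_{E_m}f\|_{H^\rho}\approx\langle m\rangle^\rho\|\Pi_{E_m}f\|_{L^2}$, Cauchy--Schwarz in $m$, and the exponential decay of the $V_l$ from analyticity, this yields the off-diagonal decay $\sup_{|k-k'|=h}\|V_{[k]}^{[k']}(l)\|\lesssim\langle h\rangle^{-(s+1)}\|V_l\|_{H^{s+s_0}}$ for $s_0=s_0(n)$ large enough, and the rest of your computation of $|V|_{s,\s'}$ is routine and correct. So your argument does not differ in strategy from the cited result; its added value is that it makes the lemma self-contained rather than a black-box citation, and it makes transparent how the threshold $s_0(n)$ arises (Sobolev embedding plus convergence of the tail series in $m$ and in $h$).
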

\begin{proof}
The fact that $V\in\mathcal{M}_{s,\sigma'}$ 
is a consequence of Proposition $2.19$ in \cite{BP} 
(see also Lemma $3.1$ in \cite{BCP}). 
Actually this is the reason why 
we use the  $s$-decay norm 
(see Definition \ref{decayNorm}). 
So we only have to verify the second statement. 
By definition we have
\[
V_{[k]}^{[k']}=(V_{k,j}^{k',\ell})_{\substack{1\leq j\leq d_k\\1\leq \ell\leq d_{k'}}}
\qquad {\rm with }\qquad
V_{k,j}^{k',\ell}:=\int_{\S^n} V(x) \Phi_{k,j}(x)\Phi_{k',\ell}(x) dx\,.
\]
Now the spherical harmonic $\Phi_{k,j}$ 
has the same parity than $k$: 
$\Phi_{k,j}(-x)=(-1)^k\Phi_{k,j}(x)$. 
Therefore, if $V$ is odd, we conclude
\begin{equation}\label{calo3}
V_{[k]}^{[k']}= 0 
\quad \text{if}\quad k+k' \quad \text{even}\,,
\end{equation}
which implies the \eqref{calo}.
\end{proof}

\noindent
Now we consider the perturbation term $W(\o t,x) (-i\partial_\phi)^\alpha$ appearing in \eqref{nls}.
We know that $L_{x_3}=-i\partial_\phi$ also 
diagonalizes in spherical harmonic 
basis\footnote{
Recall that in \eqref{nls} we are in $\S^2$ and 
the spherical harmonic basis is given by 
$\Phi_{k,j}=Ce^{ij\phi}P^j_k(\cos \theta)$ 
for $k\in\N$ and $-k\leq j\leq k$ 
and where $P_k^j$ are the 
Legendre polynomials 
(see for instance wikipedia.org/wiki/Spherical-harmonics).
}:
\[
-i\partial_\phi\Phi_{k,j}=
j\Phi_{k,j}\quad k\in\N,\ \ j=-k,\cdots,k
\]
and we define $(-i\partial_\phi)^\alpha$ by
\begin{equation}\label{partialalfa}
(-i\partial_\phi)^\alpha \Phi_{k,j}={\rm sign}(j)|j|^{\alpha}\Phi_{k,j} 
\quad k\in\N,\ \ j=-k,\cdots,k\,.
\end{equation}

\begin{lemma}\label{potok2}
Assume that $\vphi\mapsto W(\vphi,\cdot)\in 
H^{s+s_0}(\mathbb{S}^{n};\mathbb{R})$  
analytically extends to $\mathbb{T}^{d}_{\s}$ 
for some $\s>0$
and with $s_0=s_0(n,d,\alpha)\geq (d+n)/2$.
 Then the matrix that represents the unbounded 
 operator $R=W(\o t,x) (-i\partial_\phi)^\alpha$  
 belongs to $\mathcal{M}_{\alpha,s+\nu,\sigma'}$
 with $\nu$ as in \eqref{parametrix} and $0<\s'<\s$.
Furthermore if  $W$ is an odd function in the 
space variable then $R$ is diagonal free:
\begin{equation}\label{calo1}
R_{[k]}^{[k]}=0\quad  k\in\N\,.
\end{equation}
 \end{lemma}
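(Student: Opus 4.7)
The plan is to factor $R=W(\omega t,x)(-\ii\partial_\phi)^\alpha$ as the product of the multiplication operator by $W$ and a bounded block-diagonal Fourier multiplier, and to reduce the two pieces of the norm $\bral\cdot\brar_{\alpha,s+\nu,\sigma'}$ of Definition \ref{1smooth} to the decay-norm estimate for $W$ already invoked in Lemma \ref{potok} (i.e.\ Proposition 2.19 of \cite{BP}).

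First I would observe, using \eqref{partialalfa}, that $(-\ii\partial_\phi)^\alpha$ preserves every eigenspace $E_k$ and restricts to a diagonal matrix with entries $\sgn(j)|j|^\alpha$, $|j|\leq k$; in particular its $L^2$-operator norm on $E_k$ is at most $k^\alpha$. Combined with \eqref{equiweight}, the block-diagonal operator $\mathcal{D}^{-\alpha}(-\ii\partial_\phi)^\alpha$ is uniformly bounded in $k$. Writing each block as
\begin{equation*}
R_{[k]}^{[k']}(\ell)=W_{[k]}^{[k']}(\ell)\,[(-\ii\partial_\phi)^\alpha]_{[k']},
\end{equation*}
the estimate for $R\mathcal{D}^{-\alpha}$ is the easy one: the factor $\lambda_{k'}^{-\alpha/2}[(-\ii\partial_\phi)^\alpha]_{[k']}$ has $L^2$-operator norm bounded by a constant independent of $k'$, so $\|(R\mathcal{D}^{-\alpha})_{[k]}^{[k']}(\ell)\|_{\mathcal{L}(L^{2})}\leq C\,\|W_{[k]}^{[k']}(\ell)\|_{\mathcal{L}(L^{2})}$, which gives $|R\mathcal{D}^{-\alpha}|_{s+\nu,\sigma'}\leq C|W|_{s+\nu,\sigma'}$.

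For $\mathcal{D}^{-\alpha}R$, since $\mathcal{D}$ commutes with $(-\ii\partial_\phi)^\alpha$ (both preserve $E_k$), I would factor
\begin{equation*}
\mathcal{D}^{-\alpha}R=\big(\mathcal{D}^{-\alpha}W\mathcal{D}^\alpha\big)\big(\mathcal{D}^{-\alpha}(-\ii\partial_\phi)^\alpha\big),
\end{equation*}
so it remains to bound the conjugated multiplication operator, whose blocks are $(\lambda_{k'}/\lambda_k)^{\alpha/2}W_{[k]}^{[k']}(\ell)$. Using \eqref{equiweight} together with $|k-k'|=h$ one gets $(\lambda_{k'}/\lambda_k)^{\alpha/2}\leq C\langle h\rangle^\alpha$. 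Absorbing this extra factor $\langle h\rangle^\alpha$ into the weight $\langle l,h\rangle^{s+\nu}$ of Definition \ref{decayNorm} costs one power $\alpha$ of Sobolev regularity, and the product rule for the decay norm (Lemma \ref{DecayAlg}) then yields $|\mathcal{D}^{-\alpha}R|_{s+\nu,\sigma'}\leq C|W|_{s+\nu+\alpha,\sigma'}$. By Lemma \ref{potok} both bounds are finite provided $s_0=s_0(n,d,\alpha)$ is chosen larger than $\nu+\alpha$ plus the fixed regularity loss of Proposition 2.19 of \cite{BP}.

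The diagonal-free property \eqref{calo1} is immediate from $R_{[k]}^{[k]}=W_{[k]}^{[k]}\,[(-\ii\partial_\phi)^\alpha]_{[k]}$ and the parity argument in the proof of Lemma \ref{potok}: with $k'=k$ the sum $k+k'=2k$ is even, hence $W_{[k]}^{[k]}=0$ whenever $W$ is odd. The main obstacle is precisely the conjugation bound for $\mathcal{D}^{-\alpha}W\mathcal{D}^\alpha$: one has to pay a power $\alpha$ of Sobolev regularity in order to trade the ratio of eigenvalues $(\lambda_{k'}/\lambda_k)^{\alpha/2}$ for off-diagonal decay in $|k-k'|$, which is what forces the dependence of $s_0$ on $\alpha$ and makes the condition $\nu\geq 1-\alpha$ from \eqref{parametrix} more than just cosmetic. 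Oddness of $W$ plays no role in this unbounded-part estimate; it contributes only to the diagonal-free conclusion.
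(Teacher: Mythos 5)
Your proposal is correct and follows essentially the same route as the paper's (very terse) proof: bound $(-\ii\partial_\phi)^\alpha$ block-wise by $\mathcal{D}^\alpha$, reduce both halves of the $\bral\cdot\brar_{\alpha,s+\nu,\sigma'}$ norm to the decay-norm estimate on $W$ from Lemma~\ref{potok}, and use the parity of spherical harmonics for the diagonal-free claim. The added value of your write-up is that you make explicit the conjugation bound $|\mathcal{D}^{-\alpha}W\mathcal{D}^\alpha|_{s+\nu,\sigma'}\leq |W|_{s+\nu+\alpha,\sigma'}$ needed for the $\mathcal{D}^{-\alpha}R$ piece, which in the paper is implicit via Lemma~\ref{DecayAlg}~(iv); this is indeed the step that makes $s_0$ depend on $\alpha$.
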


\begin{proof}
Since $|j|\leq k$ we have (see \eqref{Diag}) 
$- \mathcal D\leq -i\partial_\phi\leq \mathcal D$ 
in the sense of operators on $\ell_{0,0}$ 
(see \eqref{timeseq}). 
Thus, in view of Definition \ref{1smooth} 
and Lemma \ref{potok}, we get the first part of the Lemma.
It remains we only have to verify the second part. 
By definition we have
\[
R_{[k]}^{[k']}=(R_{k,j}^{k',\ell})_{\substack{-k\leq j\leq k\\-k'\leq \ell\leq {k'}}}
\qquad {\rm with} \qquad
R_{k,j}^{k',\ell}:={\rm sign}(\ell)|\ell|^\alpha
\int_{\S^n} W(x) \Phi_{k,j}(x)\Phi_{k',\ell}(x) dx\,.
\]
So we use again that the spherical harmonic 
$\Phi_{k,j}$ has the same parity than 
$k$ to conclude that if $W$ is odd then
$R_{[k]}^{[k']}$ satisfies \eqref{calo3} and hence \eqref{calo1} holds.
\end{proof}

\begin{proof}[{\bf Proof of Theorem \ref{thm:main}}]
The result follows by Lemmata \ref{potok}, \ref{potok2} and
by Theorem \ref{thm:main2}.
\end{proof}

 We conclude 
 this section with examples of 
 regularizing perturbations 
 $R'\in \mathcal{M}^{\g,\mathcal{O}}_{-\beta,s,\sigma}$. 
 The natural framework is that of pseudo-differential operators. \\
 We denote by
      $S_{\rm cl}^m(\S^n)$ the space of classical real valued symbols of
      order $m\in \R$ on the cotangent $T^*(S^n)$ of $\S^n$ (see
      H\"ormander \cite{ho} for more details).
      \begin{definition}
        \label{pseudo}
We say that $A\in\mathcal{P}_m$ 
if it is a pseudodifferential operator 
(in the sense of H\"ormander \cite{ho}, see also  \cite{BGMR2} ) with
symbol of class $S^m_{\rm cl}(M)$.
      \end{definition}

 We have
 \begin{lemma}\label{pseudoLemma}
Let $\beta>0$ and assume that $\vphi\mapsto R(\vphi,\cdot)\in 
\mathcal{P}_\beta$  analytically extends to 
$\mathbb{T}^{d}_{\s}$ for some $\s>0$.
Then the matrix that represents the  operator $R$  
belongs to $\mathcal{M}_{-\beta,s,\sigma}$ for all $s>(n+d)/2$.
 \end{lemma}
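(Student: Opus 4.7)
The plan is to reduce the statement to the boundedness of order-$0$ pseudodifferential operators on $\mathbb{S}^{n}$, and then to extract off-diagonal block decay from a commutator identity with the Laplacian, exploiting the strong spectral gaps \eqref{sepProp}. By Definition \ref{1smooth}, the conclusion $R\in\mathcal{M}_{-\beta,s,\sigma}$ is equivalent to proving that both $\vphi$-families $\mathcal{D}^{\beta}R$ and $R\mathcal{D}^{\beta}$ have finite $(s,\sigma)$-decay norm in the sense of Definition \ref{decayNorm}. Since $\mathcal{D}^{\beta}=(-\Delta)^{\beta/2}$ is a classical $\vphi$-independent pseudodifferential operator of order $\beta$ on $\mathbb{S}^{n}$, the standard symbolic calculus of H\"ormander~\cite{ho} (see also \cite{BGMR2}) guarantees that both products are analytic $\vphi$-families with values in $\mathcal{P}_{0}$. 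It therefore suffices to prove the following abstract claim: any $P(\vphi)\in\mathcal{P}_{0}$ analytic in $\vphi\in\mathbb{T}^{d}_{\sigma}$ has finite $(s,\sigma')$-decay norm for every $s\geq 0$ and every $0<\sigma'<\sigma$ (the loss $\sigma-\sigma'$ is then absorbed into the initial analyticity radius, which is why the statement is formulated with the same $\sigma$ on both sides).

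The heart of the proof is a commutator trick with $\Delta$. Let $P_{l}$ denote the $l$-th Fourier coefficient of $P$ in $\vphi$. From $-\Delta\Phi_{[k]}=\lambda_{k}\Phi_{[k]}$ one deduces, for $k\neq k'$,
\begin{equation*}
(\lambda_{k}-\lambda_{k'})\,(P_{l})_{[k]}^{[k']}=\big(\mathrm{ad}_{\Delta}(P_{l})\big)_{[k]}^{[k']},\qquad \mathrm{ad}_{\Delta}(P_{l}):=\Delta P_{l}-P_{l}\Delta\,.
\end{equation*}
Iterating $N$ times, and using that $\mathrm{ad}_{\Delta}^{N}(P_{l})$ is a pseudodifferential operator of order $N$ (each commutator with $\Delta$ raises the order by $1$ in the symbolic calculus) and hence maps $H^{N}\to L^{2}$ continuously, together with the explicit separation $|\lambda_{k}-\lambda_{k'}|=|k-k'|(k+k'+n-1)\geq |k-k'|\,\langle k'\rangle$, one absorbs the factor $\langle k'\rangle^{N}$ coming from the $H^{N}$-norm on the block $E_{k'}$ to obtain, for every $N\in\mathbb{N}$,
\begin{equation*}
\|(P_{l})_{[k]}^{[k']}\|_{\mathcal{L}(L^{2})}\leq C_{N}\,\|P_{l}\|_{\mathcal{P}_{0};N}\,(1+|k-k'|)^{-N},
\end{equation*}
where $\|\cdot\|_{\mathcal{P}_{0};N}$ is a fixed symbolic seminorm on $\mathcal{P}_{0}$.

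It remains to combine this with the exponential decay of the Fourier modes coming from analyticity: by a standard Paley--Wiener argument, $\|P_{l}\|_{\mathcal{P}_{0};N}\lesssim e^{-\sigma'|l|}$ for every $\sigma'<\sigma$. Inserting these two estimates into \eqref{decayNorm2} and choosing $N>s+(d+1)/2$ makes the double series converge, so $|P|_{s,\sigma'}<+\infty$. Applied with $P=\mathcal{D}^{\beta}R$ and $P=R\mathcal{D}^{\beta}$, this gives $\bral R\brar_{-\beta,s,\sigma'}<+\infty$ and proves the lemma. The main obstacle, and the step requiring the most care, will be the uniform symbolic control of $\mathrm{ad}_{\Delta}^{N}(P_{l})$: one must show that its seminorm in $\mathcal{P}_{N}$ is dominated by a finite, $l$-independent collection of seminorms of $P_{l}$ in $\mathcal{P}_{0}$, with constants $C_{N}$ whose growth in $N$ does not overwhelm the factor $(1+|k-k'|)^{-N}$. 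This is standard in pseudodifferential calculus on compact manifolds, but it has to be propagated through the analytic dependence in $\vphi$ in order to obtain uniformity in $l$.
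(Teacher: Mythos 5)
Your proof is correct and follows the same overall strategy as the paper --- reduce to order-$0$ pseudodifferential operators and extract off-diagonal block decay from iterated commutators with a function of the Laplacian --- but you run the commutator iteration with $\Delta$ itself, whereas the paper uses $(-\Delta)^{1/2}$. The paper's choice is cleaner: since $(-\Delta)^{1/2}$ has order $1$, the commutator $[(-\Delta)^{1/2},A]$ of an order-$0$ operator $A$ is again order $0$, so the iterated commutators $A_{N}$ remain uniformly $L^{2}$-bounded and the block estimate $\|A_{[k]}^{[k']}\|\leq C_{N}|k-k'|^{-N}$ falls out directly from the linear gap $|\lambda_{k}^{1/2}-\lambda_{k'}^{1/2}|\gtrsim|k-k'|$, with no Sobolev indices to track. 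Your version instead produces operators $\mathrm{ad}_{\Delta}^{N}(P)$ of order $N$, which forces you to pay a factor $\langle k'\rangle^{N}$ on the source block $E_{k'}$ and then reclaim it through the stronger quadratic gap $|\lambda_{k}-\lambda_{k'}|\geq|k-k'|\langle k'\rangle$; this bookkeeping is valid but more delicate, and it is exactly the kind of unnecessary complication that the square-root trick is designed to avoid. On the plus side, you explicitly treat both $\mathcal{D}^{\beta}R$ and $R\mathcal{D}^{\beta}$, whereas the paper's proof only writes out the first factor and leaves the second implicit; and you are more explicit than the paper about the Paley--Wiener step and the choice of $N$ versus $s$ and $d$. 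One caveat you should be aware of: the "uniform symbolic control of $\mathrm{ad}_{\Delta}^{N}(P_{l})$" that you flag at the end is indeed where the order-$N$ route demands more care than the order-$0$ route, so if you keep your version you should make that estimate precise; switching to $(-\Delta)^{1/2}$ as in the paper makes it trivial.
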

 
\begin{proof}
We use the so called commutator Lemma: 
Let A be a linear operator which maps 
$H^s(\S^n)$ into itself and define the sequence of operators
\[
A_0:=A\,, \quad  A_N=[(-\Delta)^{1/2},A_{N-1}]\,,\quad N\geq1\,,
\]
we have for any $\Phi_k\in E_k$, $\Phi_{k'}\in E_{k'}$
\begin{equation}\label{cmmu}
|\langle A\Phi_k,\Phi_{k'}\rangle|\leq 
\frac{1}{|\lambda_k^{1/2}-\lambda_{k'}^{1/2}|^N}| 
\langle A_N\Phi_k,\Phi_{k'}\rangle|\,.
\end{equation}
Consider the operator $A:=D^\beta R$, 
by hypothesis $A\in\mathcal{A}_0$ 
and $(-\Delta)^{1/2}\in\mathcal{A}_1$ 
so by the fundamental property of pseudo-differential 
operators we deduce that for all $N\geq1$,  $A_N\in\mathcal{A}_0$. 
As a consequence $ \|A_N\Phi_k\|\leq C_N \|\Phi_k\|$ 
and thus by \eqref{commu}
$$\|A_{[k]}^{[k']}\|\leq  \frac{C_N}{|k-k'|^N}.$$
Taking $N=N(s)$ large enough we 
deduce that $A\in\mathcal M_{s,\s'}$ and 
thus $A\in \mathcal M_{-\beta,s,\s'}$.
\end{proof}

\section{The regularization step}\label{regu}
In this section we show that Theorem \ref{thm:main} (where $R$ is unbounded) can be reduced to a reducibility problem with a smoothing perturbation.
To do this, we use the properties of the eigenvalues
of the Laplacian operator on the spheres to show that the operator
$G$ in \eqref{start0} can be conjugated to a diagonal operator
plus a smoothing remainder.
More precisely we have the following (We use the same set of constants as in the section \ref{S.main}).

\begin{proposition}\label{riduco}
There exists $\bar{\e}>0$ and $C>0$ (depending only on $s,n,d$) 
such that for any 
$0<\e\leq\bar{\e}$, if    $R$ as in \eqref{start0}
satisfies 
\begin{equation}\label{tuttopiccolo}
\bral R\brar_{\alpha,s+\nu,\s}^{\gamma,\mathcal{O}}\leq \e
\end{equation}
 then the following holds.
There exists 
 a Lipschitz family of invertible and symplectic maps 
map $\mathcal{T}:=\mathcal{T}(\omega):={\rm Id}+\mathcal{F}$,
with $\mathcal{F}\in \mathcal{M}_{\alpha-1,s+\nu,\s}^{\gamma,\mathcal{O}}$
and (see Def. \ref{1smooth})
\begin{equation}\label{bracket}
\bral \mathcal{F}\brar_{\alpha-1,s+\nu,\s}^{\gamma,\mathcal{O}}
\leq C\e\,,
\end{equation}
such that the conjugate of the operator $G$ in \eqref{start0} has the form
\begin{equation}\label{conj}
\mathcal{T}\circ G\circ \mathcal{T}^{-1}=
\omega\cdot\pa_{\vphi}
-\ii(\mathcal{D}^{2}+Z)+M
\end{equation}
where $Z\in \mathcal{M}_{-\beta,s}^{\gamma,\mathcal{O}}$ is in normal form
(see Def. \ref{normalform200}) and 
$M\in \mathcal{M}_{-\beta,s,\s/2}^{\gamma,\mathcal{O}}$
is   Hamiltonian (see Def. \ref{Hermo})
 and satisfy
\begin{equation}\label{conj2}
\bral Z\brar_{-\beta,s}^{\gamma,\mathcal{O}},\ 
\bral M\brar_{-\beta,s,\s/2}^{\gamma,\mathcal{O}}\leq 
C(\bral R\brar_{\alpha,s+\nu,\s}^{\gamma,\mathcal{O}}
+\bral R'\brar_{-\beta,s,\s}^{\gamma,\mathcal{O}})\,.
\end{equation}
Finally $M$ is such that $M_{[k]}^{[k]}(0)=0$ for any $k\in\mathbb{N} $.
\end{proposition}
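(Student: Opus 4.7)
I will realize the transformation as the time-$1$ flow of a Hamiltonian vector field: set $\mathcal{T} = e^{\ii A}$ for some Hermitian, $\vphi$-dependent $A \in \mathcal{M}_{\alpha-1,s+\nu,\sigma}^{\gamma,\mathcal{O}}$. Symplecticity is then automatic from Lemma \ref{conjHam}, and the bound \eqref{bracket} on $\mathcal{F} := \mathcal{T} - {\rm Id}$ will follow from the series expansion \eqref{exp} and the algebra of the decay norms, provided $\bral A\brar_{\alpha-1,s+\nu,\sigma}^{\gamma,\mathcal{O}}$ is small. Combining \eqref{lie1} and \eqref{lie2}, the conjugate splits as
\begin{equation*}
\mathcal{T} G \mathcal{T}^{-1} = \omega\cdot\pa_\vphi - \ii \mathcal{D}^2 + \big(R + [\ii A,-\ii\mathcal{D}^2] - \ii\omega\cdot\pa_\vphi A\big) + R' + \text{(higher-order brackets)},
\end{equation*}
and, using $[\ii A,-\ii\mathcal{D}^2] = [A,\mathcal{D}^2]$ together with $(-\ii\omega\cdot\pa_\vphi A)(l) = \omega\cdot l \, A(l)$ on the $l$-th Fourier coefficient, the parenthesized first-order term reads, at block $(k,k')$ and mode $l$,
\begin{equation*}
R_{[k]}^{[k']}(l) + (\omega\cdot l + \lambda_{k'} - \lambda_k)\, A_{[k]}^{[k']}(l).
\end{equation*}

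I will solve the corresponding homological equation explicitly by setting
\begin{equation*}
A_{[k]}^{[k']}(l) := - \frac{R_{[k]}^{[k']}(l)}{\omega\cdot l + \lambda_{k'} - \lambda_k}\qquad \text{for } k \neq k' \text{ and } |l| \leq c(k+k'),
\end{equation*}
and $A_{[k]}^{[k']}(l) := 0$ otherwise, with $c > 0$ small enough that the separation property \eqref{sepProp}, $|\lambda_k - \lambda_{k'}| \geq k + k'$, combined with $\omega \in [1/2, 3/2]^d$, yields $|\omega\cdot l + \lambda_{k'} - \lambda_k| \geq (k+k')/2$ on this ``good'' region. Since this lower bound is comparable to $\max(\sqrt{\lambda_k},\sqrt{\lambda_{k'}})$, the construction gains one derivative over $R$, so that $A$ is of order $\alpha-1$ with the expected estimates (Lipschitz dependence on $\omega$ follows from the uniform lower bound on the divisors). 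Outside the good region the equation is not solved, leaving an unsolved piece $R_{\rm bad}$: its diagonal part ($k=k'$) vanishes thanks to the diagonal-free hypothesis \eqref{ipoBlocco}, while on the remaining high-frequency piece $|l| > c(k+k')$ the analytic decay $e^{-|l|\sigma}$ of $R$ absorbs any polynomial in $k+k'$, so $R_{\rm bad}$ will be $-\beta$-smoothing (actually smoothing to any order) in $\mathcal{M}_{-\beta,s,\sigma/2}^{\gamma,\mathcal{O}}$.

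The delicate point is the order of the iterated commutators. Naively $[A,\mathcal{D}^2]$ would be of order $\alpha + 1$ (unbounded), but the explicit form of $A$ as a divisor quotient gives
\begin{equation*}
(\lambda_{k'} - \lambda_k)\, A_{[k]}^{[k']}(l) = -R_{[k]}^{[k']}(l) \cdot \frac{\lambda_{k'} - \lambda_k}{\omega\cdot l + \lambda_{k'} - \lambda_k},
\end{equation*}
and the ratio is bounded by $2$ on the good region; a similar bound shows $\omega\cdot l \, A_{[k]}^{[k']}(l)$ is controlled by $R_{[k]}^{[k']}(l)$. Hence both $[\ii A,-\ii\mathcal{D}^2]$ and $-\ii\omega\cdot\pa_\vphi A$ inherit the order $\alpha$ of $R$. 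Each additional bracket ${\rm ad}_{\ii A}$ then lowers the order by $1-\alpha > 0$, so that for $p \geq 2$ every term ${\rm ad}_{\ii A}^{p-1}(-\ii\omega\cdot\pa_\vphi A)$, ${\rm ad}_{\ii A}^p(-\ii\mathcal{D}^2)$, ${\rm ad}_{\ii A}^p(R)$, and ${\rm ad}_{\ii A}^p(R')$ (for $p \geq 0$ on the last) is of order $\leq 2\alpha - 1 = -\beta$; convergence of the series in $\bral \cdot\brar_{-\beta, s, \sigma/2}^{\gamma,\mathcal{O}}$ is ensured by the smallness assumption \eqref{tuttopiccolo}. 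Summing all remainder contributions gives a Hamiltonian $-\beta$-smoothing operator $\widetilde M$; the required $Z$ and $M$ are then obtained by extracting the normal form via $-\ii Z := {\rm Diag}(\widetilde M)$ (Def. \ref{normalform200}) and setting $M := \widetilde M + \ii Z$, ensuring $M_{[k]}^{[k]}(0) = 0$ and the bounds \eqref{conj2}.

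The hardest part is precisely this order-reduction argument: it bypasses the pseudo-differential symbolic calculus (which is the traditional way to get smoothing of commutators like $[A,\mathcal{D}^2]$) via a direct manipulation of the divisor quotient, available only because the separation property \eqref{sepProp} holds on the sphere and $A$ was tailored to solve the homological equation exactly on the good region.
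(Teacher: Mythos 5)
Your proposal is correct, and it achieves the same conclusion as the paper, but via a genuinely different choice of generator. The paper sets
\[
\mathcal{A}_{[k]}^{[k']}(l)=\frac{\ii\,R_{[k]}^{[k']}(l)}{\lambda_k-\lambda_{k'}}\qquad (k\neq k'),\qquad \mathcal{A}_{[k]}^{[k]}=0,
\]
with \emph{no} $\omega\cdot l$ in the denominator and \emph{no} truncation in $l$. This kills only the spatial commutator, $R+[\mathcal{A},-\ii\mathcal{D}^2]=0$ exactly (thanks to the diagonal-free hypothesis \eqref{ipoBlocco}), while the term $-\omega\cdot\pa_\vphi\mathcal{A}$ survives in $M$; but it is already of order $\alpha-1\leq-\beta$, its only ``cost'' being a factor $|l|$ that is absorbed by shrinking the analyticity width $\s$. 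In contrast, you solve the \emph{full} homological equation by putting $\omega\cdot l+\lambda_{k'}-\lambda_k$ in the denominator and then truncating to the region $|l|\leq c(k+k')$, where the separation property \eqref{sepProp} still controls the divisor from below by $(k+k')/2$. The unsolved piece $R_{\rm bad}$ is supported on $|l|>c(k+k')$ and is therefore smoothing to any order by analyticity. Both routes hinge on the same mechanism — the separation property \eqref{sepProp} gives a divisor $\gtrsim k+k'$, hence a generator of order $\alpha-1$, hence the cascade of orders you describe — but the paper's version is slightly cleaner: the denominator is $\omega$-independent, so the Lipschitz analysis is trivial, and there is no truncation and no bad region to manage. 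Your version is closer in spirit to a single KAM step with truncated Fourier support and yields, as a by-product, an unsolved remainder that is arbitrarily smoothing rather than merely $-\beta$-smoothing. Two small bookkeeping remarks: (i) in the list of higher-order terms you wrote ${\rm ad}_{\ii A}^{p}(R)$ for $p\geq 2$, but the sum actually starts at $p\geq 1$ (the term $[\ii A,R]$, of order $(\alpha-1)+\alpha=-\beta$, belongs in $M$); (ii) since your $A$ now depends on $\omega$ through the denominator, the Lipschitz bound of $A$ acquires a contribution $\sim |l|/(\text{divisor})^2\cdot\bral R\brar^{sup}$, which on the truncated region is again of order $\alpha-1$ and is absorbed in the weighted norm since $\gamma\leq 1$; it is worth spelling this out when claiming \eqref{bracket}.
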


\begin{proof}
Consider the matrix 
\begin{equation}\label{genero}
\mathcal{A}=\Big(\mathcal{A}_{[k]}^{[k']}(l)\Big)_{\substack{l\in \mathbb{Z}^{d}\\
k,k'\in \mathbb{N}}}\,, \qquad
\mathcal{A}_{[k]}^{[k']}(l):=\left\{
\begin{aligned}
& \frac{\ii {R}_{[k]}^{[k']}(l)}{\lambda_{k}-\lambda_{k'}}\,,
\quad \forall\,l\in \mathbb{Z}^{d}\,\;\;\; k,k'\in\mathbb{N}\, \;\;k\neq k'\,,\\
&0 \qquad\qquad k=k'
\end{aligned}
\right.
\end{equation}
with $\lambda_{k}$ defined in \eqref{eigen}. Since $R$ is Hamiltonian one verifies that
$\mathcal{A}$ is Hamiltonian. Moreover, using that, for $k\neq k'$,
one has $|\lambda_{k}-\lambda_{k'}|\geq k+k'$, we deduce that
(recall \eqref{Diag})
\[
|\mathcal{D}^{1-\alpha}\mathcal{A}|_{s+\nu,\s}^{2}\leq 
\sum_{l\in \mathbb{Z}^{d},h\in \mathbb{N}}\langle l,h\rangle^{2(s+\nu)}
e^{2|l|\s}\sup_{|k-k'|=h}\|\lambda_{k}^{\frac{1-\alpha}{2}}
\mathcal{A}_{[k]}^{[k']}(l)\|^{2}_{\mathcal{L}(L^{2})}
\leq \sup_{k\in \mathbb{N}}\Big(\frac{\lambda_{k}^{\frac{1}{2}}}{k+k'}\Big)^{2}
|R|^{2}_{\alpha,s+\nu,\s}\,.
\]
Reasoning in a similar way for $\mathcal{A}\mathcal{D}^{1-\alpha}$ 
one obtain
\begin{equation}\label{stimasucalA}
\bral \mathcal{A}\brar_{\alpha-1,s+\nu,\s}^{\gamma,\mathcal{O}}\leq C 
\bral R\brar_{\alpha,s+\nu,\s}^{\gamma,\mathcal{O}}
\end{equation}
for some $C=C(s,n)>0$.
We set $\mathcal{T}:={\rm Id}+\mathcal{F}:=e^{\mathcal{A}}$ which has the form
\eqref{exp} with $\ii A\rightsquigarrow \mathcal{A}$.
Estimates  \eqref{stimasucalA}, \eqref{tuttopiccolo} implies 
\eqref{mito30} for $\e$ small enough. Hence
the bound \eqref{bracket} follows by Lemma
\ref{well-well}.
By \eqref{genero} and the hypothesis \eqref{ipoBlocco}
we have that 
\begin{equation}\label{omo100}
R+\big[\mathcal{A}, -\ii \mathcal{D}^{2}\big]=0\,.
\end{equation}
Thus
formul\ae \, \eqref{lie1},  \eqref{lie2} and  \eqref{omo100}
imply that 
 that $\mathcal{T}\circ G\circ \mathcal{T}^{-1}$
has the form \eqref{conj} with
\begin{equation}\label{restoresto}
-\ii Z+M:=
-\omega\cdot\pa_{\vphi}\mathcal{A}
+\big[\mathcal{A},R\big]+
\sum_{p\geq0}\frac{1}{p!}{\rm ad}^{p}_{\mathcal{A}}\big(R'\big)
-
\sum_{p\geq2}\frac{1}{p!}{\rm ad}^{p-1}_{\mathcal{A}}
(\ii \omega\cdot\pa_{\vphi}\mathcal{A})\,.
\end{equation}
We define $-\ii Z$ as the 
normal form (see \eqref{newNorm} in Def. \ref{normalform200})
of the 
previous expression while $M$ is defined by difference.
Let $0<\s_{+}<\s$. Then
we have
\[
\begin{aligned}
|\mathcal{D}^{1-\alpha}\omega\cdot\pa_{\vphi}\mathcal{A}|^{2}_{s+\nu,\s_{+}}
&\leq
\sum_{l\in \mathbb{Z}^{d},h\in \mathbb{N}}\langle l,h\rangle^{2s}
e^{2|l|\s}\sup_{|k-k'|=h}\|\lambda_{k}^{\frac{1-\alpha}{2}}
\mathcal{A}_{[k]}^{[k']}(l)\|^{2}_{\mathcal{L}(L^{2})}e^{-2(\s-\s_{+})|l|}
|l|^{2}
\\
&\leq (\s-\s_{+})^{-2}
\bral \mathcal{A}\brar_{\alpha-1,s+\nu,\s}^{\gamma,\mathcal{O}}\,.
\end{aligned}
\]
With a similar reasoning one concludes
\begin{equation}\label{stimasucalA2}
\bral\omega\cdot\pa_{\vphi}\mathcal{A}
\brar^{\gamma,\mathcal{O}}_{\alpha-1, s+\nu,\s_{+}}
\stackrel{\eqref{stimasucalA}}{\leq_{s}}(\s-\s_{+})^{-1}
\bral R\brar_{\alpha, s+\nu,\s}^{\gamma,\mathcal{O}}\,.
\end{equation}
By estimate \eqref{stim2} in Lemma \ref{DecayAlg2}  and \eqref{parametrix} 
we also obtain
\begin{equation}\label{stimasucalA3}
\bral \big[\mathcal{A}, R\big]
\brar^{\gamma,\mathcal{O}}_{-\beta,s,\s_{+}}\stackrel{\eqref{stimasucalA}}{\leq_{s}}
\Big(\bral R
\brar^{\gamma,\mathcal{O}}_{\alpha,s+\nu,\s}\Big)^{2}\,.
\end{equation} 
The \eqref{conj2} follows by  
using the smallness condition \eqref{tuttopiccolo}, the estimates
 \eqref{stimasucalA}, \eqref{stimasucalA2}, \eqref{stimasucalA3}, 
 and reasoning as in Lemma \ref{well-well}.
Finally the operator $M$ is Hamiltonian by Lemma \ref{conjHam}.
\end{proof}

\section{The iterative reducibility scheme}\label{sec:itero}

In this section we prove Theorem \ref{thm:redu} taking into account the regularization step given in Proposition \ref{riduco}. This mean that we show how to block-diagonalize the operator 
 \begin{equation}\label{start}
L=L(\omega;\vphi):=\omega\cdot\pa_{\vphi}-
\ii (\mathcal{D}^{2}+Z)+M
\end{equation}
with $Z\in  \mathcal{M}_{-\beta,s}^{\gamma,\mathcal{O}}$
in normal form and 
$M\in \mathcal{M}_{-\beta,s,\s}^{\gamma,\mathcal{O}}$
 Hamiltonian 
 satisfying that
\begin{equation}\label{small}
\Theta_0:=\gamma^{-1}\bral Z\brar_{-\beta,s,\s}^{\gamma,\mathcal{O}}\,,\quad \eps_0:=\gamma^{-1}\bral M\brar_{-\beta,s,\s}^{\gamma,\mathcal{O}} \end{equation}
are small enough. Actually at the beginning of our iterative process we can take $\Theta_0=\eps_0$ (see \eqref{conj2}) but during the process it will be important to distinguish between the size of the normal form (which essentially will not change) and the size of the remainder term (which will converge rapidly to zero).
Consider the diophantine set
\begin{equation}\label{zeroMel}
\mathcal{G}_0:=\big\{\omega\in [1/2,3/2]^{d} \,:\, |\omega\cdot l|\geq 
\frac{4\gamma}{|l|^{\tau_0}}\,, \; \forall \, l\in\mathbb{Z}^{d}\big\}\,,
\qquad \tau_0:= d+1
\end{equation}
We remark that it is know that ${\rm meas}(\mathcal{G}_0)\lesssim\gamma$.
In the following we shall assume that the set of parameters
$\mathcal{O}$ satisfies $\mathcal{O}\subseteq \mathcal{G}_0$.

\subsection{ KAM strategy}\label{KAMstep}
We begin with $L$ given by \eqref{start}, we seach for $\Phi=e^{S}$ a canonical change of variable such that
\begin{equation}\label{goal}L^+=L^+(\vphi,\omega):=\Phi\circ L \circ\Phi^{-1}=
\omega\cdot\pa_{\vphi}-\ii \big(\mathcal{D}^{2}+Z_+\big)+M_+
\end{equation}
where $Z_+$ is block-diagonal and $\vphi$-independent, 
$N_+=\mathcal{D}^{2}+Z_+$ is the new normal form, 
$\eps$ close to $N_0=\mathcal{D}^2$ and  
the new perturbation $M_+$ is 
expected of size $O (\eps^2)$.\\
Using the expansion \eqref{lie1}, 
\eqref{lie2} with $\ii A\rightsquigarrow S$
we have that
\begin{align}\label{L+}
L^{+}&=\omega\cdot \pa_{\vphi}-\ii \big(\mathcal{D}^{2}+Z\big)+M
-\omega\cdot\pa_{\vphi}S+\ii\big[\mathcal{D}^{2}+Z,S\big]
\nonumber\\&\qquad +\sum_{p\geq 2}\frac{1}{p!}{\rm ad}_{S}^{p-1}
\Big(
-\omega\cdot\pa_{\vphi}S+\ii \big[\mathcal{D}^{2}+Z,S\big]
\Big)+
\sum_{p\geq1}\frac{1}{p!}{\rm ad}_{S}^{p}\big(M\big)\,.
\end{align}
Formally, if we are able to construct $S=O(\eps)$ satisfying the the so-called homological equation\footnote{In fact the homological equation that we will solve contains a small remainder in the right hand side (see \eqref{omoeq}) because we cannot solve all the Fourier modes at the same time.}
\begin{equation}\label{omoeq0}
-\omega\cdot\pa_{\vphi}S+\ii \big[\mathcal{D}^{2}+Z, S\big]+M={\rm Diag}M
\end{equation}
where ${\rm Diag}M$ is defined as in \eqref{newNorm},
then $L^+$ is of the form \eqref{goal} with $Z_+=Z+{\rm Diag}M$ and where
$M_+$ is a sum of terms containing at least two operators of size $\eps$ and thus is formally of size $\eps^2$.

\noindent
Repeating infinitely many times
the same procedure  we will construct a change of variable $\Phi$ such that
$$
\Phi\circ L \circ\Phi^{-1}=L_\infty=\omega\cdot\pa_{\vphi}-\ii \big(\mathcal{D}^{2}+Z_\infty\big)
$$
with $Z_{\infty}$ in normal form according to Definition \ref{normalform200} 
which is our final goal.

\subsection{The homological equation}
\subsubsection{Control of the small divisors}
Let $Z\in \mathcal{M}_{-\beta,s}^{\gamma,\mathcal{O}}$ be in normal form
and denote by $\mu_{k,j}$, $k\in \mathbb{N}$ and $j=1,\ldots, d_{k}$ (see \eqref{dimension}), the eigenvalues 
of the block $Z_{[k]}^{[k]}$.

We define 
the set $\mathcal{O}_{+}\subseteq \mathcal{O}\subseteq\mathcal{G}_0\subset[1/2,3/2]^d$ of parameters $\o$ for which we have a good control of the small divisors. 
Let us fix once for all 
\begin{equation}\label{sceltatau}
\tau>d+2(n-1)\tau_0/\beta+2, 
\end{equation}
with $\tau_0$ in \eqref{zeroMel}.
We set
\begin{equation}\label{calO+}
\begin{aligned} 
\mathcal{O}_{+}\equiv\mathcal{O}_{+}(\g,K)&:=\Big\{\omega\in \mathcal{O}\, : \, 
|\omega\cdot l+\lambda_{k}+\mu_{k,j}
- \lambda_{k'}+\mu_{k',j'}|
\geq \frac{2\gamma}{K^{\tau}}\,,\quad
l\in\mathbb{Z}^{d}\,, \;\; |l|\leq K \\
&\qquad \qquad \quad j=1,\ldots,d_k\,, j'=1,\ldots, d_k'\,, \; k,k'\in\mathbb{N} \,,
\quad (l,k,k')\neq (0,k,k)
\Big\}\,.
\end{aligned}
\end{equation}
We have the following.
\begin{lemma}\label{lem:misuro} Assume that 
$\bral {Z}\brar_{-\beta,s}^{\gamma,\mathcal{O}}\leq \gamma/4$ 
for some $0<\gamma\leq 1/8$ then for 
any $K\geq 1$ we have
\begin{equation}\label{misuro}
{\rm meas}\big(\mathcal{O}\setminus \mathcal{O}_{+}(\g,K)\big)\leq 
C\gamma K^{-\tau+d+2(n-1)\tau_0/\beta+1}
\end{equation}
for some $C=C(s,d,n)>0$. 
\end{lemma}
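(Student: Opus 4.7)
The plan is to bound the bad set $\mathcal{O}\setminus \mathcal{O}_+(\gamma,K)$ by the sum of measures of the elementary resonant sets
\[ R_{l,k,k',j,j'} := \bigl\{\omega\in\mathcal{O} : |\omega\cdot l + \lambda_k - \lambda_{k'} + \mu_{k,j}(\omega) - \mu_{k',j'}(\omega)| < 2\gamma K^{-\tau}\bigr\}, \]
and then to prune the sum using the Diophantine condition $\mathcal{O}\subseteq\mathcal{G}_0$ and the separation property \eqref{sepProp}.

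First I will extract a quantitative decay of the eigenvalues $\mu_{k,j}$: from the hypothesis $\bral Z\brar_{-\beta,s}^{\gamma,\mathcal{O}} \leq \gamma/4$ and Definition \ref{1smooth}, each diagonal block satisfies $\|Z_{[k]}^{[k]}(\omega)\|_{\mathcal{L}(L^2)} \leq C\gamma \langle k\rangle^{-\beta}$ with the same bound for its Lipschitz semi-norm, and classical Hermitian perturbation theory (Lipschitz dependence of eigenvalues on matrix entries) then gives $|\mu_{k,j}|^{\gamma,\mathcal{O}} \leq C\gamma \langle k\rangle^{-\beta}$. Next, for a single $R_{l,k,k',j,j'}$ with $l\neq 0$, the map $\omega\mapsto \omega\cdot l + \mu_{k,j}(\omega) - \mu_{k',j'}(\omega)$ has directional derivative in $l/|l|$ bounded below by $|l|-C\gamma \geq |l|/2$ for $\gamma$ small, so a standard Fubini argument produces $\mathrm{meas}(R_{l,k,k',j,j'}) \leq C\gamma/(|l| K^\tau)$.

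The core of the argument will be to restrict the admissible indices via three cases. If $l=0$, the defining constraint $(l,k,k')\neq (0,k,k)$ forces $k\neq k'$, so \eqref{sepProp} combined with $|\mu|\leq C\gamma$ gives $R_{0,k,k',j,j'}=\emptyset$ for $\gamma$ small. If $l\neq 0$ and $k\neq k'$, then $|\omega\cdot l|\leq C K$ together with $|\lambda_k-\lambda_{k'}|=|k-k'|(k+k'+n-1)\geq k+k'$ forces $k+k'\leq C K$. Finally, if $l\neq 0$ and $k=k'$, I will combine the Diophantine lower bound $|\omega\cdot l|\geq 4\gamma|l|^{-\tau_0}\geq 4\gamma K^{-\tau_0}$ with $|\mu_{k,j}-\mu_{k,j'}|\leq C\gamma k^{-\beta}$ and the choice $\tau>\tau_0$ to force $k^\beta\leq C K^{\tau_0}$, i.e.\ $k\leq C K^{\tau_0/\beta}$. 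Summing $C\gamma/(|l| K^\tau)$ over the admissible tuples, using $d_k\leq k^{n-1}$ to count the $(j,j')$ pairs and picking up a factor coming from $\sum_{|l|\leq K}$, the case $k=k'$ dominates since $\tau_0/\beta\geq 1$ and will yield the exponent in \eqref{misuro}.

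The main obstacle will be precisely the case $k=k'$: when the eigenvalues of $-\Delta$ cancel, the separation property \eqref{sepProp} is useless for localizing $k$, and one must combine the Diophantine control on $\omega$ with the $\beta$-smoothing decay $|\mu_{k,j}|\leq C\gamma \langle k\rangle^{-\beta}$ extracted in the first step. This is exactly where the regularization of Section \ref{regu} pays off: without the $-\beta$-smoothing structure of $Z$, arbitrarily large $k$ could contribute to the bad set and no polynomial measure bound could be produced.
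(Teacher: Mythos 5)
Your proposal follows essentially the same route as the paper's proof: decompose the bad set into the elementary resonant sets $R_{l,k,k'}^{j,j'}$, rule out $l=0,\ k\neq k'$ and bound $k+k'\lesssim |l|$ for $l\neq 0,\ k\neq k'$ via the separation property of the $\lambda_k$, use the Diophantine inclusion $\mathcal{O}\subseteq\mathcal{G}_0$ together with the $\beta$--smoothing of $Z$ (i.e.\ $|\mu_{k,j}|\lesssim\gamma\langle k\rangle^{-\beta}$) to localize $k\lesssim K^{\tau_0/\beta}$ when $k=k'$, estimate each $R_{l,k,k'}^{j,j'}$ by a Lipschitz/transversality argument in $\omega$, and finally sum using $d_k\lesssim k^{n-1}$. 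Your only deviation is keeping the factor $1/|l|$ in the single-set measure bound (via the directional derivative in $l/|l|$), whereas the paper simply uses that the relevant Lipschitz constant is $\geq 1/2$ and invokes Lemma \ref{sublevels}; this is a mild refinement that does not change the structure. You also correctly identify the conceptual crux — for $k=k'$ the separation property is useless and only the smoothing decay of the eigenvalues of $Z$ localizes $k$ — which is exactly what Proposition \ref{riduco} makes possible. One small caution when you carry out the final count: for $k=k'$ the range of $k$ is $k\lesssim K^{\tau_0/\beta}$, and you should track carefully how $\sum_{k\lesssim K^{\tau_0/\beta}}d_k^2$ combines with the sum over $l$ to produce the stated power of $K$; this is the most delicate bookkeeping step in the proof.
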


Before giving the proof of Lemma \ref{lem:misuro}
we recall the following classical result regarding the measure of
sublevels of Lipschitz functions.
\begin{lemma}\label{sublevels}
Let $m\geq1$, $\eta>0$ and let $\mathcal{O}$ be 
 a subset of $\mathbb{R}^{m}$, $m\geq 1$ such that ${\rm meas}(\mathcal{O})<+\infty$.
Consider a Lipschitz function  $f : \mathcal{O}\to \mathbb{R}$ such that
\[
|f|^{lip,\mathcal O}\geq a>0.
\]
Then, setting ${\mathcal{O}}_{\eta}:=\{ x\in \mathcal{O}\; : \; |f(x)|\leq \eta\}$ we have
\[
{\rm meas}\big({\mathcal{O}_{\eta}}\big)\leq 
\frac\eta a{\rm meas}\big({\mathcal{O}}\big)\,.
\]
\end{lemma}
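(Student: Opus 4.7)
\begin{pf}
My plan is to reduce the estimate to a one–dimensional problem via Fubini and then exploit the hypothesis $|f|^{\mathrm{lip},\mathcal O}\geq a$ as a \emph{bi–Lipschitz lower bound} on $f$ along a suitable direction. This is the interpretation consistent with the KAM application made in Lemma \ref{lem:misuro}: there the relevant scalar function is an affine function of $\omega$ plus a small Lipschitz perturbation, and one verifies the lower bound by differentiating in the direction $l/|l|$.

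\textbf{Step 1: one–dimensional reduction.} Pick a unit vector $v\in\R^{m}$ along which the lower bound $|f(\omega_{1})-f(\omega_{2})|\geq a|\omega_{1}-\omega_{2}|$ is realised on lines parallel to $v$, and decompose $\omega=\omega'+t v$ with $\omega'\in v^{\perp}$. For each fixed $\omega'$ set $I_{\omega'}:=\{t\in\R:\omega'+t v\in\mathcal O\}$ and $g_{\omega'}(t):=f(\omega'+tv)$. By Fubini's theorem,
\[
{\rm meas}(\mathcal O_{\eta})=\int_{v^{\perp}}{\rm meas}_{1}\bigl(\{t\in I_{\omega'}:|g_{\omega'}(t)|\leq \eta\}\bigr)\,d\omega'.
\]

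\textbf{Step 2: the scalar estimate.} The lower Lipschitz bound passes to each slice with the same constant $a$, so $g_{\omega'}:I_{\omega'}\to\R$ is injective and its inverse is $(1/a)$–Lipschitz on $g_{\omega'}(I_{\omega'})$. Hence the preimage of $[-\eta,\eta]$ is the image under a $(1/a)$–Lipschitz map of a set of diameter at most $2\eta$, and therefore has one–dimensional measure at most $2\eta/a$. Combining with Step~1 yields
\[
{\rm meas}(\mathcal O_{\eta})\leq \frac{2\eta}{a}\,{\rm meas}_{m-1}\bigl(\pi_{v^{\perp}}(\mathcal O)\bigr),
\]
where $\pi_{v^{\perp}}$ is the orthogonal projection onto $v^{\perp}$. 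A crude bound of the projection measure by ${\rm meas}(\mathcal O)$ (absorbing a dimensional constant into the notation) gives the stated conclusion ${\rm meas}(\mathcal O_{\eta})\leq (\eta/a)\,{\rm meas}(\mathcal O)$.

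\textbf{Main obstacle.} The only delicate point is making precise the interpretation of the hypothesis $|f|^{\mathrm{lip},\mathcal O}\geq a$ so that it transfers to each one–dimensional slice with constant $a$; as noted above, this is automatic in the intended application, where $f$ is an affine function of $\omega$ up to a Lipschitz perturbation smaller than $a/2$, so that the lower bound survives on every line parallel to a fixed coordinate direction. Once this reduction is granted, the proof is the elementary Fubini/monotone inverse argument above.
\end{pf}
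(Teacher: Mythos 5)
Your slicing strategy (Fubini along a fixed direction plus the inverse-Lipschitz bound on each one-dimensional slice) is genuinely different from the paper's argument, which never disintegrates: the paper simply takes two arbitrary points $x_1,x_2\in\mathcal{O}_{\eta}$, uses the lower bound on the difference quotient to get $|x_1-x_2|\leq \sup_{\mathcal{O}_{\eta}}|f|/a$ (up to a factor $2$), i.e.\ a bound on ${\rm diam}(\mathcal{O}_{\eta})$ by $\eta/a$, and then bounds the measure by the diameter. Your reading of the hypothesis $|f|^{lip,\mathcal{O}}\geq a$ as a uniform lower bound on difference quotients is the same implicit reading the paper makes, and your directional formulation is indeed the one that fits the application in Lemma \ref{lem:misuro}; up to that point your argument is sound at the level of rigour of the paper.

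However, your last step is a genuine gap: the inequality ${\rm meas}_{m-1}\big(\pi_{v^{\perp}}(\mathcal{O})\big)\leq C\,{\rm meas}(\mathcal{O})$ that you invoke (``a crude bound of the projection measure by ${\rm meas}(\mathcal{O})$'') is false and cannot be repaired by a dimensional constant: for the slab $\mathcal{O}=[0,1]^{m-1}\times[0,\epsilon]$ and $v=e_{m}$ the projection has measure $1$ while ${\rm meas}(\mathcal{O})=\epsilon$ is arbitrarily small. What your argument honestly proves is ${\rm meas}(\mathcal{O}_{\eta})\leq \tfrac{2\eta}{a}\,{\rm meas}_{m-1}\big(\pi_{v^{\perp}}(\mathcal{O})\big)$, which is all that is used in Lemma \ref{lem:misuro} (there $\mathcal{O}\subset[1/2,3/2]^{d}$, so the projection has measure at most $1$), but it does not yield the displayed conclusion with the factor ${\rm meas}(\mathcal{O})$; taken literally that conclusion even fails for $m=1$, $f(x)=ax$ on $\mathcal{O}=[0,1/2]$, $\eta=a/4$, so no completion of the Fubini argument can reach it. (The paper's own one-line proof is also rough here: ${\rm meas}\leq{\rm diam}$ is only legitimate in dimension one or up to constants for small sets, and it likewise does not produce the factor ${\rm meas}(\mathcal{O})$.) You should therefore either state and prove the projected version of the estimate, or follow the paper and bound ${\rm diam}(\mathcal{O}_{\eta})$ directly.
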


\begin{proof}
Let us set  ${\rm diam}({\mathcal{O}_{\eta}}):=
{\rm sup}_{x_1,x_2\in {\mathcal{O}}}|x_1-x_2|$. Notice that
${\rm meas}({\mathcal{O}_{\eta}})\leq {\rm diam}({\mathcal{O}_{\eta}})$.
For any $x_1,x_2\in {\mathcal{O}_{\eta}}$ such that $x_1\neq x_2$, 
we have that
\[
f(x_1)-f(x_2)=\left(\frac{f(x_1)-f(x_2)}{x_1-x_2}\right)(x_1-x_2)\qquad
\Rightarrow\qquad
|x_1-x_2|\leq \frac{{\rm sup}_{x\in {\mathcal{O}_{\eta}}}|f(x)|}{a}\,.
\]
This  implies the thesis.
\end{proof}

\begin{proof}[Proof of Lemma \ref{lem:misuro}]
We write
\begin{equation*}
\mathcal{O}\setminus \mathcal{O}_{+}=\bigcup_{\substack{l\in \mathbb{Z}^{d}
, |l|\leq K \\ k,k'\in \mathbb{N}\\ (\ell,k,k')\neq(0,k,k)}}\bigcup_{\substack{j=1,\ldots,d_{k}\\
j'=1,\ldots,d_{k'}}}R_{l,k,k'}^{j,j'}
\end{equation*}
where
\begin{equation*}
R_{l,k,k'}^{j,j'}:=\Big\{\omega\in \mathcal{O}\, : \, 
|\omega\cdot l+\lambda_{k}+\mu_{k,j}
- \lambda_{k'}+\mu_{k',j'}|\leq\frac{2\gamma}{K^{\tau}}
\Big\}.
\end{equation*}
We claim that, for $k\neq k'$, $l\neq0$,
\begin{equation}\label{badclaim}
{\rm if}  \qquad R_{l,k,k'}^{j,j'}\neq  \emptyset \qquad {\rm then}\qquad   k+k'\leq C|l|
\end{equation}
for some constant $C>0$ depending only on $n,d$ and $|\omega|$.
Indeed, by hypothesis, there is $\omega\in \mathcal{O}$
such that
\begin{equation}\label{lava}
|\lambda_{k}+\mu_{k,j}
- \lambda_{k'}+\mu_{k',j'}|\leq \frac{2\gamma}{K^{\tau}}+
|\omega\cdot l|\leq C|l|+\frac14\,.
\end{equation}
On the other hand, since $\bral {Z}\brar_{-\beta,s}^{\gamma,\mathcal{O}}\leq \gamma/4$ ,
by  Lemma \ref{herm} and Corollary \ref{coromu}, we have that 
 \begin{equation}\label{condEigen}
 |\mu_{k,j}|^{sup,\mathcal{O}}\leq \frac{\gamma}{4|k|^{\beta}}\,, \qquad 
 |\mu_{k,j}|^{lip,\mathcal O}\leq \frac{1}{4|k|^{\beta}}\,.
 \end{equation}
Then   using \eqref{eigen} and the first in \eqref{condEigen}, 
we conclude for $k\neq k'$
\begin{equation}\label{lava2}
|\lambda_{k}+\mu_{k,j}
- \lambda_{k'}+\mu_{k',j'}|\geq 
 \frac{1}{2}(k+k')\,.
\end{equation}
Hence, by \eqref{lava}, we have
\[
C|l|\geq \frac{1}{2}(k+k')-\frac14\geq \frac{1}{4}(k+k')
\]
 which implies \eqref{badclaim}.\\
 We also notice that when $l=0$ and $k\neq k'$ then $R_{l,k,k'}^{j,j'}=  \emptyset$ for all $j,j'$. Indeed in  such case, using again \eqref{lava2}, 
 we get $|\omega\cdot l+\lambda_{k}+\mu_{k,j}
- \lambda_{k'}+\mu_{k',j'}|
\geq \frac{1}{2}|k+k'|\geq  \frac{1}{2}>\frac{2\gamma}{K^{\tau}}$.

\noindent
Let us now consider the case $l\neq0$ and $k=k'$. We claim that
\begin{equation}\label{claim222}
|k|\geq  |l|^{\frac{\tau_0}{\beta}}\qquad \Rightarrow 
\qquad R_{l,k,k}^{j,j'}=\emptyset\,.
\end{equation}
We recall that, by assumption, the set $\mathcal{O}$ is contained in the set 
$\mathcal{G}_0$ in \eqref{zeroMel}.
Hence, for $\omega\in {\mathcal{O}}$, we deduce  by \eqref{condEigen}
\[
|\omega\cdot l+\mu_{k,j}-\mu_{k,j'}|\geq |\omega\cdot l|
-\Big(|\mu_{k,j}|^{sup,\mathcal{O}}+|\mu_{k,j'}|^{sup,\mathcal{O}}\Big)\geq
\frac{4\gamma}{|l|^{\tau_0}}-\frac{\gamma}{2|k|^{\beta}}\geq\frac{2\gamma}{|l|^{\tau_0}}
\]
using that $|k|^{\beta}\geq |l|^{\tau_0}$ 
which implies claim \eqref{claim222} since $\tau_0<\tau$.

\noindent
Now it remains to estimate the measure of
\begin{equation*}
\bigcup_{\substack{l\in \mathbb{Z}^{d}
, 0<|l|\leq K \\ |k|,|k'|\leq CK}}\bigcup_{\substack{j=1,\ldots,d_{k}\\
j'=1,\ldots,d_{k'}}}R_{l,k,k'}^{j,j'}
\end{equation*}
In order to estimate the measure of a single \emph{bad} set 
$R_{l,k,k'}^{j,j'}$ we compute the Lipschitz norm of the function
$$f(\o)=\omega\cdot l+\lambda_{k}+\mu_{k,j}(\o)
- \lambda_{k'}+\mu_{k',j'}(\o)\,.$$ 
 The second condition in \eqref{condEigen} 
 implies that (recall that $l\neq0$)
 \[
 |f|^{lip,\mathcal O}\geq \frac12\,.
 \]
Then
Lemma \ref{sublevels} implies that
${\rm meas}(R_{l,k,k'}^{j,j'}) \leq 2\frac{\g}{K^\tau}$.
Finally, we recall that, by \eqref{dimension}, \eqref{badclaim} and \eqref{claim222},
we have that 
\[
d_{k}d_{k'}\leq |l|^{2(n-1)} \quad {\rm if} \quad k\neq k' \quad 
{\rm and} \quad d_{k}^{2}\leq |l|^{\frac{2(n-1)\tau_0}{\beta}}\quad {\rm if} \quad k= k'\,.
\]
Hence
\begin{equation*}
\begin{aligned}
{\rm meas}\big(\mathcal{O}\setminus\mathcal{O}_{+}\big)&\leq
\sum_{\substack{l\in \mathbb{Z}^{d}
, 0<|l|\leq K \\ |k|,|k'|\leq CK}}\sum_{\substack{j=1,\ldots,d_{k}\\
j'=1,\ldots,d_{k'}}}R_{l,k,k'}^{j,j'}
{\leq }
\sum_{\substack{l\in \mathbb{Z}^{d}
, 0<|l|\leq K \\ |k|,|k'|\leq CK}}2\frac{\g}{K^\tau} 
d_{k}d_{k'}
\leq    
C\g K^{d+\frac{2(n-1)\tau_0}{\beta}+1-\tau}\,,
\end{aligned}
\end{equation*}
which is the \eqref{misuro}.
\end{proof}

\subsubsection{Resolution of the Homological equation}
In this section we solve the following homological equation equation 
\begin{equation}\label{omoeq}
-\omega\cdot\pa_{\vphi}S+\ii \big[\mathcal{D}^{2}+Z, S\big]+M={\rm Diag}M+R
\end{equation}
where ${\rm Diag}M$ is defined as in \eqref{newNorm}
and $R$ is some remainder to be determined.
 
 \begin{lemma}{\bf (Homological equation)}\label{omoequation} Let 
 $Z\in \mathcal{M}^{\gamma,\mathcal{O}}_{-\beta,s}$ in normal form
 and  $M\in \mathcal{M}^{\gamma,\mathcal{O}}_{-\beta,s,\s}$.
  Assume that $\bral{Z}\brar_{-\beta,s}^{\gamma,\mathcal{O}}\leq \gamma/4$ 
  and let $0<\s_+<\s$ such that 
 \begin{equation}\label{constraintS}
 \s-\s_+\geq K^{-1}\,.
 \end{equation}
 For any $\omega\in \mathcal{O}_+\equiv  \mathcal{O}_+(\gamma,K) $ 
 (defined in \eqref{calO+}) 
 there exist Hamiltonian operators 
 $S,R\in \mathcal{M}^{\gamma,\mathcal{O_+}}_{-\beta,s,\s_+}$ 
 satisfying
 \begin{align}
 &\bral S\brar_{-\beta,s,\s_+}^{\gamma,\mathcal{O}_+}\leq_{s}  
\frac{K^{2\tau+\frac{n}{\beta}\tau+n+2d+1}}{\gamma}
\bral M \brar_{-\beta,s,\s}^{\gamma,\mathcal{O}} 
 \label{gene}\\
&\bral R\brar_{-\beta,s,\s_+}^{\gamma,\mathcal{O}_+}\leq_{s}  
\bral M \brar_{-\beta,s,\s}^{\,\mathcal{O}} 
K^{d}e^{-(\s-\s_+)K}
\label{ultrastima}
 \end{align}
such that equation \eqref{omoeq} is satisfied.
 \end{lemma}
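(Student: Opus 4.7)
The plan is to solve \eqref{omoeq} by an $l$-truncated block-by-block inversion. First I would split $M$ according to Fourier modes: write $M = M_{\leq K} + M_{>K}$ where $M_{\leq K}$ contains the modes with $|l|\leq K$. The high modes are harmless: putting $R := M_{>K} - (\mathrm{Diag}M)_{>K} = M_{>K}$ (since $\mathrm{Diag}M$ is supported at $l=0$), the standard analytic smoothing estimate \eqref{smoothstim} applied to the decay norm gives $\bral R\brar^{\gamma,\mathcal{O}}_{-\beta,s,\s_+}\lesssim_s K^d e^{-(\s-\s_+)K}\,\bral M\brar^{\gamma,\mathcal{O}}_{-\beta,s,\s}$, which is \eqref{ultrastima}. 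So I only need to construct $S$ solving $-\omega\cdot\partial_\varphi S + \ii[\mathcal{D}^2+Z,S] = \mathrm{Diag}M - M_{\leq K}$.

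Next, for each triple $(l,k,k')$ with $|l|\leq K$ I diagonalize the blocks: write $Z_{[k]}^{[k]}=U_k D_k U_k^*$, $D_k=\mathrm{diag}(\mu_{k,1},\ldots,\mu_{k,d_k})$, with $U_k$ unitary. Setting $\widetilde{S}_{[k]}^{[k']}(l) := U_k^* S_{[k]}^{[k']}(l) U_{k'}$ and similarly for the right-hand side $\widetilde M$, the equation on the $(j,j')$-entry becomes the scalar equation
\[
\ii\bigl(-\omega\cdot l+\lambda_k-\lambda_{k'}+\mu_{k,j}-\mu_{k',j'}\bigr)\widetilde{S}_{[k]}^{[k']}(l)_{j,j'}=\widetilde M_{[k]}^{[k']}(l)_{j,j'}.
\]
When $(l,k,k')=(0,k,k)$ the right-hand side vanishes (since $\mathrm{Diag}M$ cancels $M_{[k]}^{[k]}(0)$), so I set $S_{[k]}^{[k]}(0)=0$. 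Otherwise $\omega\in\mathcal{O}_+$ gives the denominator lower bound $2\gamma/K^\tau$ by \eqref{calO+}, so I divide. The resulting $S$ is Hamiltonian because $M$ and $Z$ are and the operation $M\mapsto S$ commutes with the reality/Hamiltonian symmetry \eqref{albero3} (the small divisors come in complex-conjugate pairs under $l\mapsto -l$, $(k,j)\leftrightarrow (k',j')$).

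For the $(s,\s_+)$-decay norm estimate \eqref{gene} I would reason as follows. Each block satisfies the entrywise bound $|\widetilde S_{[k]}^{[k']}(l)_{j,j'}|\leq (K^\tau/2\gamma)\,|\widetilde M_{[k]}^{[k']}(l)_{j,j'}|$, hence $\|S_{[k]}^{[k']}(l)\|_{\mathrm{HS}}\leq (K^\tau/2\gamma)\|M_{[k]}^{[k']}(l)\|_{\mathrm{HS}}$. Passing from Hilbert--Schmidt to the operator norm loses a factor $\sqrt{\min(d_k,d_{k'})}\leq (k+k')^{(n-1)/2}$, and the pairs $(k,k')$ that are not eliminated by \eqref{badclaim}--\eqref{claim222} satisfy $k+k'\leq C K^{\tau_0/\beta}$ (or $k=k'$ with the analogous bound from \eqref{claim222}). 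This yields a factor $K^{(n-1)\tau_0/(2\beta)}$ in each block, which combined with the sum $\sum_{|l|\leq K}\sim K^d$ and the $K^\tau$ from the divisor produces the polynomial factor displayed in \eqref{gene}. The $\mathcal{D}^{-\beta}$ weights pass through because $Z$ is block-diagonal, so $\mathcal{D}^{-\beta}$ commutes with the diagonalizing change of basis and with the block decomposition.

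Finally, for the Lipschitz part of the norm I would differentiate the scalar relation above in $\omega$. The derivative of the divisor introduces $l$ together with derivatives of $\mu_{k,j}$ (controlled by Corollary \ref{coromu} using $\bral Z\brar^{\gamma,\mathcal{O}}_{-\beta,s}\leq\gamma/4$), and dividing once more by the Melnikov bound produces the second $K^\tau$ factor. The hardest point of the argument is keeping track of the combinatorial losses (growth of $d_k$, size of $k+k'$, and the two small-divisor factors) so that they aggregate exactly into the exponent in \eqref{gene}; the measure estimate \eqref{misuro} and the $k$-independent bound on $\mu_{k,j}$ are what make this quantitative bookkeeping possible.
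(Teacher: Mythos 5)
The setup of your proposal — the high Fourier-mode truncation defining $R$, the diagonalization of the blocks $Z_{[k]}^{[k]}$, and the scalar small-divisor formula for $\widetilde S_{k,j}^{k',j'}(l)$ — agrees with the paper and is correct. The gap is in the passage from entrywise bounds to an operator-norm bound on each block $S_{[k]}^{[k']}(l)$. You bound $\|S_{[k]}^{[k']}(l)\|_{\mathcal L(L^2)}$ via the Hilbert--Schmidt norm, paying a factor $\sqrt{\min(d_k,d_{k'})}\sim k^{(n-1)/2}$, and claim this is controlled because ``the pairs $(k,k')$ not eliminated by \eqref{badclaim}--\eqref{claim222} satisfy $k+k'\leq CK^{\tau_0/\beta}$.'' That is a misreading: \eqref{badclaim}--\eqref{claim222} are measure-theoretic statements identifying the finitely many $(l,k,k')$ on which $\mathcal O_{+}$ genuinely constrains $\omega$; they do \emph{not} restrict the range of indices for which the homological equation must be solved. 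Equation \eqref{omoeq} must be inverted block-by-block for \emph{all} $(k,k')$ with $|l|\leq K$, including arbitrarily large $k\sim k'$ where $d_k\sim k^{n-1}$ is unbounded. For $k=k'$ large your divisor lower bound is only $\gtrsim\gamma/K^{\tau_0}$, so the factor $\sqrt{d_k}$ is not absorbed for any $n\geq2$; for off-diagonal $k\sim k'$ large, the enhanced divisor $\gtrsim k+k'$ only cancels $\sqrt{\min(d_k,d_{k'})}$ when $n\leq3$. Your aggregation step therefore does not close, and the exponent you report ($(n-1)\tau_0/(2\beta)$ rather than the paper's $n\tau/(2\beta)$) reflects this.

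The paper proves the key block estimate \eqref{claimo} precisely by \emph{avoiding} the entrywise/Hilbert--Schmidt route whenever $k$ or $k'$ is large. Its case analysis runs a Neumann series twice: once when $\max\{k,k'\}\gg\min\{k,k'\}$ (the divisor $\sim k+k'$ makes $\mathcal B_{k,k'}(l)$ a contraction with no $d_k$-loss), and once when $k\sim k'$ with both larger than a threshold $K_2\sim K^{\tau/\beta+1}$ (here the matrix part $D_{[k]}-\lambda_k\mathrm{Id}$ has operator norm $\lesssim\gamma k^{-\beta}$, so it is a small \emph{relative} perturbation of the scalar divisor $-\omega\cdot l+\lambda_k-\lambda_{k'}$; this is the only place the smoothing assumption $\bral Z\brar_{-\beta,s}^{\gamma,\mathcal O}\leq\gamma/4$ is used quantitatively). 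Only the remaining case $k,k'\leq K_2$ uses the entrywise bound, and there the block size is $\leq K_2^{n-1}$, yielding the factor $K_2^{n/2}\sim K^{n\tau/(2\beta)+n/2}$. You would need to import this three-case argument — in particular the second Neumann series, which exploits the smallness of $\mu_{k,j}$ in a way your proposal never does — to obtain a polynomial-in-$K$ constant in \eqref{gene}.
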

 
 \begin{proof}
  The proof  is an adaptation of (for instance) Lemma $4.3$
 in \cite{GP}.
We set 
\begin{equation}\label{restoUltra}
R_{[k]}^{[k']}(l)=M_{[k]}^{[k']}(l)\,,\quad k,k'\in \mathbb{N}\,,
\quad l\in\mathbb{Z}^{d}\,,\quad |l|>K
\end{equation}
and $R_{[k]}^{[k']}(l)=0$ for $|l|\leq K$. 
By Lemma \ref{DecayAlg2} and \eqref{constraintS} one deduces the \eqref{ultrastima}.
Moreover, recalling \eqref{newNorm},
we have that
 equation \eqref{omoeq} is equivalent to 
\begin{equation}\label{omoeq2}
\mathcal{G}(l,k,k',\omega)S_{[k]}^{[k']}(l)+
M_{[k]}^{[k']}(l)=0
\end{equation}
for any $l\in \mathbb{Z}^{d}$, $k,k'\in \mathbb{N}$ with $(l,k,k')\neq (0,k,k)$
where the operator $\mathcal{G}(l,k,k',\omega)$ is the linear operator
acting on complex $d_{k}\times d_{k'}$-matrices as
\begin{equation}\label{azioneG}
\mathcal{G}(l,k,k',\omega)A:=\Big[-\ii \omega\cdot l +\ii \big( \lambda_{k}{\rm Id}_{[k]}+Z_{[k]}^{[k]}\big)\Big]
A-
\ii A
\big( \lambda_{k'}{\rm Id}_{[k']}+Z_{[k']}^{[k']}\big)\,.
\end{equation}
Now, since $Z_{[k]}^{[k]}$ is Hermitian, there is 
a orthogonal $d_{k}\times d_{k}$-matrix $U_{[k]}$
such that
\[
U_{[k]}^{T} \big( \lambda_{k}{\rm Id}_{[k]}+Z_{[k]}^{[k]}\big) U_{[k]}
=D_{[k]}:={\rm diag}_{j=1,\ldots,d_{k}}\big(\lambda_{k}+\mu_{k,j}\big)\,,
\]
where $\mu_{k,j}$ 
are the eigenvalues of $Z_{[k]}^{[k]}$.
By setting
\begin{equation*}
\widehat{S}_{[k]}^{[k']}(l):=U_{[k]}^{T} S_{[k]}^{[k']}(l)U_{[k']}\,,
\qquad 
\widehat{M}_{[k]}^{[k']}(l):=U_{[k]}^{T} M_{[k]}^{[k']}(l)U_{[k']}
\end{equation*}
equation \eqref{omoeq2}
reads
\begin{equation}\label{omoeq3}
\Big(-\ii \omega\cdot l +\ii D_{[k]}\Big)\widehat{S}_{[k]}^{[k']}(l)-\ii
\widehat{S}_{[k]}^{[k']}(l) D_{[k']}+\widehat{M}_{[k]}^{[k']}(l)=0\,.
\end{equation}
For $\omega\in \mathcal{O}_{+}$ (see \eqref{calO+})
the solution of \eqref{omoeq3} is
 given by (recalling the notation \eqref{notazioneBlock})
\begin{equation}\label{solOmoeq}
\widehat{S}_{k,j}^{k',j'}(l):=\left\{\begin{aligned}
& 0\,, \qquad \qquad\qquad |l|>K\,\;\; {\rm or}\;\; l=0\; {\rm and } \; k=k'\,,\\
& \frac{\ii \widehat{M}_{k,j}^{k',j'}(l)}{-\omega\cdot l
+\lambda_{k}+\mu_{k,j}-\lambda_{k'}-\mu_{k',j,}}\,,
\qquad {\rm otherwise}\,.
\end{aligned}\right.
\end{equation}
Since $M$ is Hamiltonian (see Def. \ref{Hermo} and \eqref{albero3}) 
it is easy to check that
also $S$ is Hamiltonian.
We claim that
\begin{equation}\label{claimo}
\|{S}_{[k]}^{[k']}(l)\|_{\mathcal{L}(L^2)}=\|\widehat{S}_{[k]}^{[k']}(l)\|_{\mathcal{L}(L^2)}\leq_{s}
\frac{ K^{\tau+\frac{n }{2\beta}\tau+\frac n2}}{\gamma}\|\widehat{M}_{[k]}^{[k']}(l)\|_{\mathcal{L}(L^2)}=
\frac{ K^{\tau+\frac{n }{2\beta}\tau+\frac n2}}{\gamma}
\|{M}_{[k]}^{[k']}(l)\|_{\mathcal{L}(L^2)}\,.
\end{equation}

\begin{proof}[Proof of the claim \eqref{claimo}]
To prove the claim we follows the strategy used in the proof of Lemma 4.3 in \cite{GP} (see also   Proposition 2.2.4 in \cite{DS1}) and
we prove \eqref{claimo} considering three different regimes 
of the indexes $k,k'$.

\smallskip
\noindent
{\bf Case 1.} Assume that
\begin{equation}\label{caso1}
{\rm max}\{ k, k'\}> K_1 {\rm min}\{k, k'\}
\end{equation}
for some $K_1>0$ large to be determined.
Without loss of generality we can assume ${k}>K_1{k'}$.
We note that 
\begin{equation}\label{caso1bis}
|-\omega\cdot l
+\lambda_{k}+\mu_{k,j}|\geq\frac{1}{4} \lambda_{k}
\end{equation}
 if $\lambda_{k}\geq K_1^{2}
 \geq |\omega | K\geq |\omega\cdot l|$ and using that, by hypothesis on $Z$,  $|\mu_{k,j}|\leq 1/4$. We choose 
 $K_1:=8 K$.
Equation \eqref{omoeq3} can be written
\[
({\rm Id}+\mathcal{B}_{k,k'}(l))\widehat{S}_{[k]}^{[k']}(l) 
+\big(-\ii \omega\cdot l +\ii D_{[k]}\big)^{-1}\widehat{M}_{[k]}^{[k']}(l) =0
\]
where $\mathcal{B}_{k,k'}(l)\widehat{S}_{[k]}^{[k']}(l):=
\big(-\ii \omega\cdot l +\ii D_{[k]}\big)^{-1}\widehat{S}_{[k]}^{[k']}(l)
\ii D_{[k']}$.
Since
\[
\|\mathcal{B}_{k,k'}(l)\widehat{S}_{[k]}^{[k']}(l)\|_{\mathcal{L}(L^2)}
\stackrel{\eqref{caso1bis}}{\leq}\frac{2\lambda_{k'}}{\lambda_{k}}
\|\widehat{S}_{[k]}^{[k']}(l)\|_{\mathcal{L}(L^2)}
\stackrel{\eqref{caso1}}{\leq}\frac{1}{2}
\|\widehat{S}_{[k]}^{[k']}(l)\|_{\mathcal{L}(L^2)}\,,
\]
thanks to the fact that $K_1\geq 8$, we have that the operator  
$({\rm Id}+\mathcal{B}_{k,k'}(l))$ is invertible
using Neumann series. Therefore   we have
\begin{equation}\label{1cas}
\|\widehat{S}_{[k]}^{[k']}(l)\|_{\mathcal{L}(L^2)}
\leq_{s}\|\widehat{M}_{[k]}^{[k']}(l)\|_{\mathcal{L}(L^2)}\,.
\end{equation}

\smallskip
\noindent
{\bf Case 2.} Assume that
\begin{equation}\label{caso2}
{\rm max}\{ {k}, {k'}\}\leq  
K_1 {\rm min}\{{k},{k'}\}\,,\qquad {\rm and} \qquad
{\rm max}\{ {k}, {k'}\}> K_2\,,
\end{equation}
for some $K_2>0$ to be determined.
The \eqref{caso2} implies that
\begin{equation}\label{caso2bis}
{\rm min}\{{k},{k'}\}\geq K_2 K_1^{-1}\,.
\end{equation}
Using Corollary \ref{coromu} we also note that for all $k$
\begin{equation}\label{estimmu}
|\mu_{[k]}|\leq  \frac\gamma{
4\langle k\rangle^{\beta}}\,.
\end{equation}
and thus
\begin{equation}\label{caso2tris}
\|D_{[k]}-\lambda_{k}{\rm Id}_{[k]}\|_{\mathcal{L}(L^2)}\leq 
\frac\gamma{
4\langle k\rangle^{\beta}}\,.
\end{equation}
Equation \eqref{omoeq3} is equivalent to
\begin{equation}\label{scelta34}
\big({\rm Id}+\mathcal{B}_{k,k'}^{+}(l)\big)\widehat{S}_{[k]}^{[k']}(l)
+
\frac{1}{-\omega\cdot l+\lambda_{k}-\lambda_{k'}}
\widehat{M}_{[k]}^{[k']}(l)=0\,,
\end{equation}
where the operator $\mathcal{B}_{k,k'}^{+}(l)$ acts on
$d_{k}\times d_{k'}$-matrices as
\[
\mathcal{B}_{k,k'}^{+}(l)\widehat{S}_{[k]}^{[k']}(l)=
\frac{1}{-\omega\cdot l+\lambda_{k}-\lambda_{k'}}
\Big[
\big(D_{[k]}-\lambda_{k}{\rm Id}_{[k]}\big)\widehat{S}_{[k]}^{[k']}(l)
- \widehat{S}_{[k]}^{[k']}(l)
\big(D_{[k']}-\lambda_{k'}{\rm Id}_{[k']}\big)
\Big]\,.
\]
We need to estimate  the operator norm of  $\mathcal{B}_{k,k'}^{+}(l)$.
First notice that, for any $\omega\in \mathcal{O}_{+}$ (see
\ref{calO+}),
\begin{equation}\label{smallNormale}
\begin{aligned}
|-\omega\cdot l&+\lambda_{k}-\lambda_{k'}|
\geq
|\omega\cdot l+\lambda_{k}+\mu_{k,j}
- \lambda_{k'}+\mu_{k',j'}|-
\big(|\mu_{[k']}|^{sup,\mathcal{O_+}}+|\mu_{[k']}|^{sup,\mathcal{O_+}}\big)\\
&\stackrel{\eqref{estimmu}}{\geq} \frac{2\gamma}{K^\tau}-\frac\gamma{4\langle k\rangle^{\beta}} -\frac\gamma{4\langle k'\rangle^{\beta}}
\stackrel{\eqref{caso2bis}}{\geq} \frac{2\gamma}{K^\tau}-
\frac{\gamma}{ 2K_2K_1^{-1}}
\geq\frac{\gamma}{K^\tau} 
\end{aligned}
\end{equation}
providing 
\begin{equation}\label{KK}(K_2K_1^{-1})^\beta\geq K^\tau\,.\end{equation}
Combining
 \eqref{caso2tris} and \eqref{smallNormale} we get
that, in operator norm, 
\begin{equation}\label{scelta33}
\|\mathcal{B}_{k,k'}^{+}(l)\|_{\mathcal{L}(L^2)}\leq
\frac{K^\tau}{4}\big(\langle k\rangle^{-\beta}
+\langle k'\rangle^{-\beta}\big)
\leq \frac12
\end{equation}
providing  \eqref{KK}.
 Recalling $K_1= 8 K$
we choose 
\begin{equation}\label{sceltaK2}
K_{2}:=8K^{\frac{\tau}{\beta}+1}\,.
\end{equation}
Now, by \eqref{scelta33}, the operator 
$\big({\rm Id}+\mathcal{B}_{k,k'}^{+}(l)\big)$ is invertible, and hence
by \eqref{scelta34} and \eqref{smallNormale} we get
\begin{equation}\label{2cas}
\|\widehat{S}_{[k]}^{[k']}(l)\|_{\mathcal{L}(L^{2})}\leq
\gamma^{-1} 2K^{\tau}\|\widehat{M}_{[k]}^{[k']}(l)\|_{\mathcal{L}(L^2)}\,.
\end{equation}

\smallskip
\noindent
{\bf Case 3.} Assume that
\begin{equation}\label{caso3}
{\rm max}\{ {k}, {k'}\}\leq  
K_1 {\rm min}\{{k},{k'}\}\,,\qquad {\rm and} \qquad
{\rm max}\{ {k}, {k'}\}\leq K_2\,,
\end{equation}
In that case the size of the blocks are less than $K_2^{n}$ 
and we have, for any 
$j=1,\ldots, d_{k}$, $j'=1,\ldots, d_{k'}$,
\[
|\widehat{S}_{k,j}^{k',j'}(l)|\leq \gamma^{-1}K^{\tau}
|\widehat{M}_{k,j}^{k',j'}(l)|\,,
\] 
and hence
\begin{equation}\label{3cas}
\|\widehat{S}_{[k]}^{[k']}(l)\|_{\mathcal{L}(L^{2})}
\leq \gamma^{-1} K^{\tau}K_{2}^{\frac{n}{2}}
\|\widehat{M}_{[k]}^{[k']}(l)\|_{\mathcal{L}(L^{2})}
\stackrel{\eqref{sceltaK2}}{\leq_s}
\gamma^{-1} K^{\tau+\frac{n }{2\beta}\tau+\frac n2}
\|\widehat{M}_{[k]}^{[k']}(l)\|_{\mathcal{L}(L^{2})}\,.
\end{equation}
By collecting the bounds \eqref{1cas}, \eqref{2cas}
and \eqref{3cas} we get \eqref{claimo}.
\end{proof}

\noindent
Estimate \eqref{claimo}
allows us to conclude that
\begin{equation}\label{claimo2}
\bral \widehat{S}\brar_{s,\s_+}=\bral {S}\brar_{s,\s_+}
\leq_s
 \frac{K^{\tau+\frac{n }{2\beta}\tau+\frac n2}}{
 \gamma (\s-\s_+)^{d}}\bral M \brar_{s,\s}\stackrel{\eqref{constraintS}}{\leq}
\frac{K^{\tau+\frac{n}{2\beta}\tau+\frac n2+d}}{\gamma}
 \bral M \brar_{s,\s}\,.
\end{equation}
Indeed (recall \eqref{decayNorm2})
\[
\begin{aligned}
|\mathcal{D}S|_{s,\s'}^{2}&\stackrel{\eqref{claimo}}{\leq_{s}}
\gamma^{-2}
\sum_{l\in \mathbb{Z}^{d},h\in \mathbb{N}}\langle l,h\rangle^{2s}
e^{2|l|\s'}\sup_{|k-k'|=h} K^{2\tau+\frac{n }{\beta}\tau+{n}+2d}
\|(\mathcal{D}M)_{[k]}^{[k']}(l)\|^{2}_{\mathcal{L}(L^{2})}\,.
\end{aligned}
\]
To obtain \eqref{gene}, it remains to estimate the Lipschitz variation of the matrix $S$.
For any family of operators $\omega\mapsto A=A(\omega)$ and 
 any  $\omega_1,\omega_2\in \mathcal{O}_{+}$ with 
 $\omega_1\neq\omega_2$ we set
\[
\Delta_{\omega_1,\omega_2}A:=\frac{A({\omega_1})-A(\omega_2)}{\omega_1-\omega_2}\,.
\]
Hence, by \eqref{omoeq2}, we obtain
\begin{equation}\label{omoeq4}
\mathcal{G}(l,k,k',\omega_2)\Delta_{\omega_1,\omega_2}S_{[k]}^{[k']}(l)
+\Delta_{\omega_1,\omega_2}M_{[k]}^{[k']}(l)+
\Delta_{\omega_1,\omega_2} \mathcal{G}(l,k,k',\cdot) 
S_{[k]}^{[k']}(l,\omega_1)=0\,,
\end{equation}
which is an equation of the same form of \eqref{omoeq2}
with different non-homogeneous term.
Using that $|Z|^{lip,\mathcal O}\leq1/4$ we deduce from \eqref{azioneG} 
\[
\|\Delta_{\omega_1,\omega_2}\mathcal{G}(l,k,k',\cdot) 
S_{[k]}^{[k']}(l)
 \|_{\mathcal{L}(L^{2})}
 \leq_{s} K\|S_{[k]}^{[k']}(l) \|_{\mathcal{L}(L^{2})}
 \stackrel{\eqref{claimo}}{\leq_s}
 \gamma^{-1} K^{\tau+\frac{n }{2\beta}\tau+\frac n2+1}
\|\widehat{M}_{[k]}^{[k']}(l)\|_{\mathcal{L}(L^2)}\,.
\]
Then, reasoning as in the proof of \eqref{claimo}, we deduce
\[
\| \Delta_{\omega_1,\omega_2}S_{[k]}^{[k']}(l)\|_{\mathcal{L}(L^{2})}
\leq_{s}
\frac{K^{2\tau+\frac{n }{\beta}\tau+n+2}}{\gamma^{2}}
\|M_{[k]}^{[k']}(l)\|_{\mathcal{L}(L^{2})}
+
\frac{K^{\tau+\frac{n }{2\beta}\tau+\frac{n}2+1}}{\gamma}
\| \Delta_{\omega_1,\omega_2}M_{[k]}^{[k']}(l)\|_{\mathcal{L}(L^{2})}
\]
which, following the proof of \eqref{claimo2} 
and using \eqref{constraintS} and recalling the choice \eqref{parametrix}, 
implies  \eqref{gene}.
\end{proof}

\subsection{The KAM step}
Now we compute the new $L_+$ (see \eqref{goal}) generated by the change of variable $\Phi=e^S$ where $S$ satisfies the homological equation \eqref{omoeq}. \\
We first prove the following.
\begin{lemma}\label{stimaMappa}
There is $C(s)>0$ (depending only on $s$) such that, if
\begin{equation}\label{smallsmall}
\gamma^{-1} C(s)K^{2\tau+\frac{n}{\beta}\tau+n+2d+1}
\bral M\brar_{-\beta,s,\s}^{\gamma,\mathcal{O}}\leq \frac{1}{2}\,,
\end{equation}
then  the map 
$\Phi=e^{S}={\rm Id}+\Psi$, with $S$ given by Lemma \ref{omoequation},
satisfies 
\begin{equation}\label{stimaMappa1}
\bral \Psi\brar_{-\beta,s,\s_+}^{\gamma,\mathcal{O}_+}\leq_s 
\gamma^{-1} K^{2\tau+\frac{n}{\beta}\tau+n+2d+1}
\bral M\brar_{-\beta,s,\s}^{\gamma,\mathcal{O}}\,.
\end{equation}
\end{lemma}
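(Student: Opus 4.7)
The approach mirrors the treatment of $\mathcal{T}={\rm Id}+\mathcal{F}=e^{\mathcal{A}}$ in the proof of Proposition \ref{riduco}: having solved the homological equation, we invoke the exponential well-posedness result recalled as Lemma \ref{well-well} in Appendix \ref{techtech}, identifying $S$ with the generator (the role played there by $\mathcal{A}$, or $\ii A$ in the abstract formula \eqref{exp}).

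First, by the bound \eqref{gene} of Lemma \ref{omoequation} applied with our choice of $S$, one has
$$\bral S\brar_{-\beta,s,\s_+}^{\gamma,\mathcal{O}_+}\leq_{s}\frac{K^{2\tau+\frac{n}{\beta}\tau+n+2d+1}}{\gamma}\bral M\brar_{-\beta,s,\s}^{\gamma,\mathcal{O}}.$$
By the smallness hypothesis \eqref{smallsmall}, absorbing the implicit $s$-dependent constant into $C(s)$, the right hand side is at most $1/2$. In particular, using Remark \ref{inclusioni}, the non-smoothing decay norm $|S|_{s,\s_+}^{\gamma,\mathcal{O}_+}$ is controlled by the same quantity and is also $\leq 1/2$.

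Next I would expand
$$\Psi = e^S - {\rm Id} = \sum_{p\geq 1}\frac{1}{p!}S^p$$
and apply an algebra-type product estimate for the smoothing norm which I expect to be contained in Lemma \ref{DecayAlg2} of Appendix \ref{techtech}. The desired inequality, obtained by keeping the smoothing $\mathcal{D}^{\beta}$ on the outermost factor of $S^{p}$ and treating the remaining $p-1$ inner copies of $S$ by the usual algebra property of the $|\cdot|_{s,\s_+}$-norm, is
$$\bral S^{p}\brar_{-\beta,s,\s_+}^{\gamma,\mathcal{O}_+}\leq \bigl(C(s)\bral S\brar_{-\beta,s,\s_+}^{\gamma,\mathcal{O}_+}\bigr)^{p-1}\bral S\brar_{-\beta,s,\s_+}^{\gamma,\mathcal{O}_+}.$$
Summing the series under the $1/2$-smallness of $\bral S\brar_{-\beta,s,\s_+}^{\gamma,\mathcal{O}_+}$ gives
$$\bral \Psi\brar_{-\beta,s,\s_+}^{\gamma,\mathcal{O}_+}\leq 2\bral S\brar_{-\beta,s,\s_+}^{\gamma,\mathcal{O}_+},$$
which combined with the bound on $S$ yields \eqref{stimaMappa1}.

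The only nontrivial ingredient is the product estimate for the $\bral\cdot\brar_{-\beta,s,\s_+}$-norm and its Lipschitz-in-$\omega$ variant. Writing $\bral A\brar_{-\beta,s,\s_+} = |\mathcal{D}^{\beta}A|_{s,\s_+}+|A\mathcal{D}^{\beta}|_{s,\s_+}$, the point is that the smoothing factor $\mathcal{D}^{\beta}$ must be placed on the outermost copy of $S$ in $S^{p}$, so that the internal $p-1$ factors are handled by the classical algebra estimate for the decay norm (cf.\ Lemma \ref{DecayAlg}); the Lipschitz bound then follows from the same argument applied to the telescoping expansion of $\Delta_{\omega_1,\omega_2}S^{p}$ into $p$ summands. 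Both steps are expected to be routine consequences of the appendix lemmas, so no essentially new ingredient beyond Lemma \ref{omoequation} and Lemma \ref{well-well} should be needed.
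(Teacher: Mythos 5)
Your argument is correct and takes essentially the same route as the paper. After deriving $\bral S\brar_{-\beta,s,\s_+}^{\gamma,\mathcal{O}_+}\leq 1/2$ from \eqref{gene} and \eqref{smallsmall}, the paper's proof simply cites Lemma \ref{well-well} to conclude; you additionally unpack the short argument inside Lemma \ref{well-well}, namely the exponential series for $\Psi$ together with the algebra estimate \eqref{stim1} for the $\bral\cdot\brar_{-\beta,s,\s_+}$-norm, which is exactly how that lemma is proved.
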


\begin{proof}
By \eqref{gene} and \eqref{smallsmall}
we have that
\begin{equation}\label{vulcano}
C(s)\bral S\brar_{-\beta,s,\s_+}^{\gamma,\mathcal{O}_+}
\leq_{s}
1/2\,.
\end{equation}
This implies te smallness condition \eqref{mito30}. Hence 
the \eqref{stimaMappa1} follows by Lemma \ref{well-well}.
\end{proof}

\subsubsection{The new normal form}
As said in section \ref{KAMstep} we define the new normal form $Z_{+}$ 
as
\begin{equation}\label{newZ}
Z_{+}:=Z+\ii {\rm Diag}M\,.
\end{equation}
We have the following.

\begin{lemma}{\bf (New normal form)}\label{newnormal}
We have that $Z_{+}$ in \eqref{newZ} is in normal form
(see Def. \ref{normalform200})
 and satisfies 
 \begin{equation}\label{parole}
 \bral Z_{+}\brar_{-\beta,s}^{\gamma,\mathcal{O}_{+}}
 \leq 
 \gamma(\Theta+\e)\,.
 \end{equation}
 There is a sequence of Lipschitz function
 \[
 \mu_{[k]}^{+} : \mathcal{O}_{0} \to \mathbb{R}^{d_{k}}\,,\quad k\in \mathbb{N}
 \]
such that, for $\omega\in \mathcal{O}_{+}$, the functions
 $\mu_{k,j}^{+}$, for $j=1,\ldots,d_{k}$, are the eigenvalues
 of the block $(Z_{+})_{[k]}^{[k]}$ 
satisfying 
\begin{equation}\label{modo1}
\sup_{k\in \mathbb{N}}\langle k\rangle^{\beta}
 |\mu^{+}_{[k]} |^{\gamma,\mathcal{O}_0}\leq \gamma
 (\Theta+\e)\,.
\end{equation}
\end{lemma}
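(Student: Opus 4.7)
The plan is to verify the three defining properties of normal form for $Z_{+}$, then read off the required bounds from standard inequalities satisfied by the diagonal projector and by eigenvalues of Hermitian matrices.

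First I would check that $Z_{+}$ is in normal form in the sense of Definition \ref{normalform200}. $\varphi$-independence is clear, since $Z$ is already $\varphi$-independent (being in normal form) and ${\rm Diag}M$ keeps only the $l=0$ component of $M$ by \eqref{newNorm}. Block-diagonality is also clear from the same two facts. Hermitian character requires a short check: $Z$ is Hermitian by hypothesis, and since $M$ is Hamiltonian, the coefficients $M_{[k]}^{[k]}(0)$ satisfy $M_{[k]}^{[k]}(0) = -\overline{M_{[k]}^{[k]}(0)}$ by \eqref{albero3}, so $\ii M_{[k]}^{[k]}(0)$ is Hermitian, and consequently $\ii{\rm Diag}M$ is a Hermitian block-diagonal operator.

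Next I would prove the norm bound \eqref{parole} by the triangle inequality
\[
\bral Z_{+}\brar_{-\beta,s}^{\gamma,\mathcal{O}_{+}}
\leq \bral Z\brar_{-\beta,s}^{\gamma,\mathcal{O}_{+}}
+\bral {\rm Diag}M\brar_{-\beta,s}^{\gamma,\mathcal{O}_{+}},
\]
using that $\mathcal{O}_{+}\subseteq \mathcal{O}$ so the first term is dominated by $\gamma\Theta$, while the second term is dominated by $\bral M\brar_{-\beta,s,\s}^{\gamma,\mathcal{O}}=\gamma\e$ since the projection ${\rm Diag}$ only retains the $l=0$, $k=k'$ diagonal blocks of $M$ and can only decrease the decay norm from Definition \ref{decayNorm} (and its weighted Lipschitz version).

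For the eigenvalue statement I would proceed as follows. For each $\omega\in\mathcal{O}_{+}$ the block $(Z_{+})_{[k]}^{[k]}(\omega)$ is a finite-dimensional Hermitian matrix, hence has $d_{k}$ real eigenvalues $\mu_{k,1}^{+}(\omega),\dots,\mu_{k,d_{k}}^{+}(\omega)$, counted with multiplicity and ordered non-decreasingly. By the classical Lidskii/Weyl inequality for Hermitian matrices, each ordered eigenvalue is a $1$-Lipschitz function of the matrix in operator norm, so
\[
|\mu_{k,j}^{+}(\omega_{1})-\mu_{k,j}^{+}(\omega_{2})|
\leq \|(Z_{+})_{[k]}^{[k]}(\omega_{1})-(Z_{+})_{[k]}^{[k]}(\omega_{2})\|_{\mathcal{L}(L^{2})},
\]
which combined with $|\mu_{k,j}^{+}(\omega)|\leq\|(Z_{+})_{[k]}^{[k]}(\omega)\|_{\mathcal{L}(L^{2})}$ gives the weighted Lipschitz bound
\[
\langle k\rangle^{\beta}|\mu_{k,j}^{+}|^{\gamma,\mathcal{O}_{+}}
\lesssim \|\lambda_{k}^{\beta/2}(Z_{+})_{[k]}^{[k]}(\cdot)\|^{\gamma,\mathcal{O}_{+}}_{\mathcal{L}(L^{2})}
\leq |\mathcal{D}^{\beta}Z_{+}|_{s,\s}^{\gamma,\mathcal{O}_{+}}
\leq \bral Z_{+}\brar_{-\beta,s}^{\gamma,\mathcal{O}_{+}},
\]
by the very definition of the decay norm (the supremum in \eqref{decayNorm2} includes the terms $l=0$, $k=k'$) and of $\bral\cdot\brar_{-\beta,s}$ in Definition \ref{1smooth}. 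Applying \eqref{parole} yields the bound \eqref{modo1} on $\mathcal{O}_{+}$. Finally, to extend $\mu_{[k]}^{+}$ to the whole of $\mathcal{O}_{0}$ with the same Lipschitz constant I would invoke the Kirszbraun extension theorem component by component; since the sup norm is also preserved by taking a truncation at the existing sup bound, \eqref{modo1} follows on $\mathcal{O}_{0}$.

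The only mildly delicate step is the Lipschitz extension from $\mathcal{O}_{+}$ to $\mathcal{O}_{0}$, because the Cantor set $\mathcal{O}_{+}$ can be very irregular; this is however the standard Kirszbraun/Whitney argument used throughout the KAM literature, so no genuine obstacle arises.
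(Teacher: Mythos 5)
Your proof is correct and follows essentially the same route as the paper. The paper is extremely terse: it declares $Z_+$ is in normal form ``by construction'' and defers \eqref{modo1} entirely to Corollary~\ref{coromu}, which in turn is a corollary of Lemma~\ref{herm} proved via the Courant--Fischer min-max formula. You inline that argument, quoting Weyl/Lidskii instead of Courant--Fischer, but these are the same fact about Lipschitz dependence of ordered eigenvalues of Hermitian matrices, so there is no real difference. Your triangle-inequality plus ``diagonal projection decreases the decay norm'' argument for \eqref{parole} is exactly what is implicitly used. One point where you are actually more careful than the paper: the lemma asserts the $\mu_{[k]}^+$ are defined and Lipschitz on all of $\mathcal{O}_0$ with the bound \eqref{modo1} on $\mathcal{O}_0$, but Corollary~\ref{coromu} only delivers this on $\mathcal{O}$ (or $\mathcal{O}_+$); the extension to $\mathcal{O}_0$ is not addressed in the paper's proof, whereas you explicitly invoke Kirszbraun. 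A small simplification you could make: since $Z_+=Z+\ii\,{\rm Diag}M$ is already defined on the whole input set $\mathcal{O}\supseteq\mathcal{O}_+$, you can run the eigenvalue argument directly on $\mathcal{O}$ and only extend from $\mathcal{O}$ to $\mathcal{O}_0$, avoiding the detour through $\mathcal{O}_+$.
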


\begin{proof}
The matrix $Z_{+}$ is $\vphi$-independent, block-diagonal and Hermitian 
by construction. Estimate \eqref{modo1} is a consequence of Corollary \ref{coromu}.
\end{proof}

\subsubsection{The new remainder}
Now we compute and estimate $M_+$ given by \eqref{goal}.
\begin{lemma}{\bf (The new remainder)}\label{nuovoresto}
Assume that the smallness condition \eqref{smallsmall}
holds true.
 The new remainder 
$M_{+}\in \mathcal{M}^{\gamma,\delta,\mathcal{O}_{+}}_{-\beta,s,\s_+}$
is Hamiltonian and satisfies
\begin{equation}\label{nuovoRem1}
\bral M_{+}\brar_{-\beta,s,\s_+}^{\gamma,\mathcal{O}_+}\leq_s 
K^{2\tau+\frac{n}{\beta}\tau+n+2d+1}
\bral M\brar_{-\beta,s,\s_+}^{\gamma,\mathcal{O}}\Big(e^{-(\s-\s_+)K}
+\gamma^{-1}\bral M\brar_{-\beta,s,\s_+}^{\gamma,\mathcal{O}}\Big)\,.
\end{equation}
\end{lemma}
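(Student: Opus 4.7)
The plan is to substitute the homological equation solved in Lemma \ref{omoequation} into the Lie expansion \eqref{L+} in order to obtain an explicit expression for $M_{+}$, and then to control each term by means of the algebra properties of the $\bral\cdot\brar_{-\beta,s,\s_+}^{\gamma,\mathcal{O}_+}$-norm.

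\textbf{Identification of $M_+$.} Starting from \eqref{L+} with $\ii A\rightsquigarrow S$ and using the homological equation \eqref{omoeq} in the form
\[
-\omega\cdot\pa_{\vphi}S+\ii[\mathcal{D}^{2}+Z,S]=\mathrm{Diag}\,M+R-M,
\]
the linear terms in $S$ collapse and one gets
\[
L^{+}=\omega\cdot\pa_\vphi-\ii(\mathcal{D}^{2}+Z)+\mathrm{Diag}\,M+R
+\sum_{p\geq 2}\frac{1}{p!}\mathrm{ad}_{S}^{p-1}(\mathrm{Diag}\,M+R-M)+\sum_{p\geq 1}\frac{1}{p!}\mathrm{ad}_{S}^{p}(M).
\]
Recalling $Z_{+}=Z+\ii\,\mathrm{Diag}\,M$, so that $-\ii(\mathcal{D}^{2}+Z_+)=-\ii(\mathcal{D}^{2}+Z)+\mathrm{Diag}\,M$, the desired form $L^{+}=\omega\cdot\pa_\vphi-\ii(\mathcal{D}^{2}+Z_{+})+M_{+}$ holds with
\[
M_{+}=R+\sum_{p\geq 1}\frac{1}{p!}\mathrm{ad}_{S}^{p}(M)+\sum_{p\geq 2}\frac{1}{p!}\mathrm{ad}_{S}^{p-1}(\mathrm{Diag}\,M+R-M).
\]

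\textbf{Estimates.} The first summand $R$ is directly controlled by \eqref{ultrastima}. For the commutator series, I will use the algebra estimate of Lemma \ref{DecayAlg2} for the norm $\bral\cdot\brar_{-\beta,s,\s_+}^{\gamma,\mathcal{O}_+}$, namely $\bral[S,A]\brar\lesssim_{s}\bral S\brar\bral A\brar$, applied iteratively to get $\bral\mathrm{ad}_{S}^{p}(A)\brar\leq_{s}(C(s)\bral S\brar)^{p}\bral A\brar$. Combining this with the bound \eqref{gene} for $S$, the monotonicity $\bral M\brar_{-\beta,s,\s_+}^{\gamma,\mathcal{O}}\leq \bral M\brar_{-\beta,s,\s}^{\gamma,\mathcal{O}}$ from Remark \ref{inclusioni}, the obvious bound $\bral\mathrm{Diag}\,M\brar\leq\bral M\brar$, and \eqref{ultrastima} for $R$, one finds schematically
\[
\bral M_{+}\brar_{-\beta,s,\s_+}^{\gamma,\mathcal{O}_+}\leq_{s}\bral R\brar+\sum_{p\geq 1}\bigl(C(s)\bral S\brar\bigr)^{p}\bigl(\bral M\brar+\bral R\brar\bigr).
\]
The smallness condition \eqref{smallsmall}, together with \eqref{gene}, gives $C(s)\bral S\brar\leq 1/2$, so the geometric series converges and is controlled by its first term $C(s)\bral S\brar\cdot(\bral M\brar+\bral R\brar)$. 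Substituting the bounds \eqref{gene} and \eqref{ultrastima} yields
\[
\bral M_{+}\brar_{-\beta,s,\s_+}^{\gamma,\mathcal{O}_+}\leq_{s}
K^{2\tau+\frac{n}{\beta}\tau+n+2d+1}\bral M\brar_{-\beta,s,\s}^{\gamma,\mathcal{O}}
\Bigl(e^{-(\s-\s_+)K}+\gamma^{-1}\bral M\brar_{-\beta,s,\s}^{\gamma,\mathcal{O}}\Bigr),
\]
which is exactly \eqref{nuovoRem1} (the factor $K^{d}$ in \eqref{ultrastima} is absorbed in the larger $K^{c_{1}}$).

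\textbf{Hamiltonian structure and expected obstacle.} Since $S$ is Hamiltonian by Lemma \ref{omoequation} and $M$, $\mathrm{Diag}\,M$, $R$ are Hamiltonian, Lemma \ref{conjHam} guarantees that each commutator $\mathrm{ad}_{S}^{p}$ applied to a Hamiltonian operator is Hamiltonian, whence $M_{+}$ is Hamiltonian. The only genuinely technical point is ensuring that the algebra estimate of Lemma \ref{DecayAlg2} is applied with the correct loss of regularity in the analytic strip $\s\to \s_+$; since both $S$ and $M$ are already estimated at $\s_+$ (thanks to \eqref{gene} and Remark \ref{inclusioni}), no further loss is incurred in the commutators, and the series sums in $\mathcal{M}_{-\beta,s,\s_+}^{\gamma,\mathcal{O}_+}$. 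This is the only bookkeeping step that needs care; the rest is a routine KAM computation.
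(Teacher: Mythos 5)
Your proof follows the same route as the paper's: substitute the homological equation \eqref{omoeq} into the Lie expansion \eqref{L+} to identify $M_+$, estimate the resulting commutator series via the algebra estimate \eqref{stim1}, plug in \eqref{gene} for $S$ and \eqref{ultrastima} for $R$, use the smallness condition \eqref{smallsmall} (i.e.\ \eqref{vulcano}) to sum the series, and invoke Lemma \ref{conjHam} for the Hamiltonian property. Your explicit formula for $M_+$ keeps the $-M$ contribution inside the $\mathrm{ad}_S^{p-1}$ sum rather than recombining it into the $\mathrm{ad}_S^{p}(M)$ sum; this is the literal outcome of the substitution and is harmlessly equivalent to the paper's \eqref{gene3} (whose stated coefficients are in fact a slight simplification), so no issue there. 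The only small bookkeeping remark is that, as in the paper, the crude bound $\bral R\brar\leq K^d\bral M\brar$ introduces an extra $K^{d}$ in the commutator estimate which you (like the paper) absorb into the power $K^{2\tau+\frac{n}{\beta}\tau+n+2d+1}$ up to a constant — this is fine since the precise polynomial exponent is irrelevant in the iteration. The argument is correct and essentially identical to the paper's.
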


\begin{proof}
Equations \eqref{omoeq}, \eqref{newZ} and \eqref{L+} lead to the following formula for $M_+$
\begin{equation}\label{gene3}
M_+:=R+\widetilde{M}_{+}:=
R+
\sum_{p\geq 2}\frac{1}{p!}{\rm ad}_{S}^{p-1}\Big(
{\rm Diag}M+R
\Big) +\sum_{p\geq1}\frac{1}{p!}{\rm ad}_{S}^{p}\big(M\big)
\end{equation}
with $R$ satisfying  \eqref{ultrastima}. Thus, in order to prove \eqref{nuovoRem1} we need to estimate $\widetilde{M}_+$.
By \eqref{ultrastima} and \eqref{stim1}, we have
\[
\bral
\big[S, {\rm Diag}M+R\big]
\brar_{-\beta,s,\s_{+}}^{\gamma,\mathcal{O}_+}
\leq C(s)K^{d}
\bral S\brar_{-\beta,s,\s_+}^{\gamma,\mathcal{O}_+}
\bral M\brar_{-\beta,s,\s}^{\gamma,\mathcal{O}}
\]
for some $C(s)>0$. The term $[S,M]$ can be estimated in the same way.
Hence
\begin{equation}\label{gene2}
\begin{aligned}
\bral M_+\brar_{-\beta,s,\s_+}^{\gamma,\mathcal{O}_+}&\leq  K^{d}C(s)
\bral S\brar_{-\beta,s,\s_+}^{\gamma,\mathcal{O}_+}
\bral M\brar_{-\beta,s,\s}^{\gamma,\mathcal{O}}
\sum_{p\geq2}\frac{1}{p!}(C(s))^{p-1}
\big(\bral S\brar_{-\beta,s,\s_+}^{\gamma,\mathcal{O}_+}\big)^{p}\\
&\stackrel{\eqref{vulcano}}{\leq} 4C(s) K^{d}
\bral S\brar_{s,\s_+}^{\gamma,\mathcal{O}_+}
\bral M\brar_{s,\s}^{\gamma,\mathcal{O}}\,.
\end{aligned}
\end{equation}
Using formula \eqref{gene3}
we have that the estimates 
\eqref{ultrastima}, \eqref{gene2} and 
\eqref{gene} imply the \eqref{nuovoRem1}.
By Lemma \ref{conjHam} we have that $M_{+}$ is Hamiltonian.
\end{proof}

\subsection{The iterative Lemma}
We fix $1<\chi<2$, 
$K_0\geq1$, $\s_0:=\s/2$ (see \eqref{conj2}) and we recall that $\Theta_0=\eps_0>0$ (see \eqref{small}).
For $k\in\mathbb{N}$ we introduce the following parameters: 
\begin{equation}\label{parametri}
K_{k}:= 4^{k} {K_0}\,,\quad \s_{k+1}:=(1-2^{-k-3})\s_{k}\,,
\quad
\Theta_{k}:=\Theta_{0}\Big(1+\sum_{0<\nu\leq k }2^{-k}\Big)\,,\quad \eps_k=\eps_0e^{-\chi^k}\,.
\end{equation}
Consider an operator $L_0$ of the form \eqref{start} with $\mathcal{O}
\rightsquigarrow\mathcal{G}_0\cap\mathcal{O}$, $\s\rightsquigarrow \s_0$
where 
 $\mathcal{G}_0$ is in 
\eqref{zeroMel}.
We prove the following.
\begin{proposition}{\bf (Iterative Lemma)}\label{lemmaitero}
There are  $K_{\star}, \Theta_{\star}>0$ depending on $n,s,d$, $\chi$,
with $1<\chi<2$,
such that
if 
\[
\eps_0=\Theta_0\leq \Theta_{\star}\quad \text{and}\quad K_0\geq K_\star
\]
then for all $k\geq 0$ we can construct:

\vspace{0.5em}
\noindent
$\bullet$ sets $\mathcal{O}_{k+1}\subset \mathcal{O}_{k}\subset \mathcal{G}_0$ satisfying
\begin{equation}\label{measO}
{\rm meas}\Big(\mathcal{O}_{k+1}\setminus \mathcal{O}_{k}\Big)
\leq C(s)\gamma K_k^{-\tau+d+2(n-1)\tau_0/\beta+1}
\end{equation}

\noindent
$\bullet$ Lipschitz family of canonical change of variables $\Phi_{k}\equiv\Phi_{k}(\omega):={\rm Id}+\Psi_{k}$
with $\Psi_{k}\in \mathcal{M}_{-\beta,s,\s_{k}}^{\gamma,\mathcal{O}_{k}}$
and 
\begin{equation}\label{stimaPsinu}
\bral \Psi_{k}\brar_{-\beta,s,\s_{k}}^{\gamma,\mathcal{O}_{k}}\leq \e_0 C_{\star}
2^{-k}\,
\end{equation}
where $C_\star=C_\star(n,d,s,K_0)$.

\vspace{0.5em}
\noindent
$\bullet$ Lipschitz family of operators
\begin{equation}\label{opLnu}
L_{k}\equiv L_k(\o):=\omega\cdot\pa_{\vphi}-\ii \big(\mathcal{D}^{2}+Z_{k}\big)+
M_{k}
\end{equation}
with $Z_{k}\in\mathcal{M}_{-\beta,s}^{\gamma,\mathcal{O}_{k}}$ 
in normal form and 
$M_{k}\in\mathcal{M}_{-\beta,s,\s_{k}}^{\gamma,\mathcal{O}_{k}}$ 
Hamiltonian (see Def.  \ref{normalform200}, \ref{Hermo}) 
satisfying 
\begin{equation}\label{Zk} 
Z_{k+1} =Z_k+{\rm Diag}M_k\,,  
\end{equation}
\begin{equation}\label{ittero}
\gamma^{-1}
\bral M_{k}\brar_{-\beta,s,\s_{k}}^{\gamma,\mathcal{O}_{k}}\leq \eps_k\,,
\qquad
\gamma^{-1}
\bral Z_{k}\brar_{-\beta,s}^{\gamma,\mathcal{O}_{k}}\leq \Theta_k\,,
\end{equation}
such that  for any $k\geq1$  
\begin{equation}\label{opLnu2}
L_{k}:=\Phi_{k}\circ L_{k-1}\circ\Phi^{-1}_{k}\quad \forall\, \o\in \mathcal O_k\,.
\end{equation}
\end{proposition}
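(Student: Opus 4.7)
The plan is to argue by induction on $k$, starting from $L_0$ as produced by Proposition \ref{riduco} (with $\Phi_0:={\rm Id}$, $\mathcal{O}_0:=\mathcal{G}_0\cap\mathcal{O}$, and the initial estimates \eqref{small} giving \eqref{ittero} at $k=0$). Assuming the statement at step $k$, I would construct the step $k+1$ by four moves corresponding to the four previous lemmata of the section.

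\emph{Step 1: excised parameter set.} Since $\gamma^{-1}\bral Z_k\brar^{\gamma,\mathcal{O}_k}_{-\beta,s}\leq \Theta_k\leq 2\Theta_\star\leq 1/4$ for $\Theta_\star$ small, I can apply Lemma \ref{lem:misuro} with $K=K_k$ and $Z=Z_k$, defining $\mathcal{O}_{k+1}:=\mathcal{O}_k(\gamma,K_k)\subset\mathcal{O}_k$. This gives the measure bound \eqref{measO} directly.

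\emph{Step 2: homological equation and generator $S_{k+1}$.} On $\mathcal{O}_{k+1}$ I apply Lemma \ref{omoequation} with $M=M_k$, $Z=Z_k$, $\s=\s_k$, $\s_+=\s_{k+1}$, producing a Hamiltonian $S_{k+1}$ and a remainder $R_{k+1}$ solving \eqref{omoeq}. The gap condition $\s_k-\s_{k+1}=2^{-k-3}\s_0\geq K_k^{-1}$ is satisfied for $K_0$ large enough (thanks to $K_k=4^k K_0$ growing faster than $2^k/\s_0$). The estimate \eqref{gene} gives
\[
\bral S_{k+1}\brar^{\gamma,\mathcal{O}_{k+1}}_{-\beta,s,\s_{k+1}}\leq_s \gamma^{-1} K_k^{a}\bral M_k\brar^{\gamma,\mathcal{O}_k}_{-\beta,s,\s_k}\leq_s K_k^{a}\eps_k
\]
with $a:=2\tau+n\tau/\beta+n+2d+1$, and \eqref{ultrastima} controls $R_{k+1}$ by $K_k^{d} e^{-(\s_k-\s_{k+1})K_k}\gamma\eps_k$.

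\emph{Step 3: flow and conjugation.} Setting $\Phi_{k+1}:=e^{S_{k+1}}={\rm Id}+\Psi_{k+1}$, the smallness condition \eqref{smallsmall} reads $C(s)K_k^{a}\eps_k\leq 1/2$, which I verify from $\eps_k=\eps_0 e^{-\chi^k}$ provided $\Theta_\star$ is chosen small (since $K_k^a=4^{ka}K_0^a$ grows only geometrically while $\eps_k$ decays super-exponentially). Then Lemma \ref{stimaMappa} yields \eqref{stimaPsinu} with $C_\star$ depending on $n,d,s,K_0$, because $K_k^a\eps_k\leq K_0^a\eps_0\cdot 4^{ka}e^{-\chi^k}\leq \eps_0 C_\star 2^{-k}$ for $k$ large (and for small $k$ by enlarging $C_\star$).

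\emph{Step 4: new normal form and new remainder.} Define $Z_{k+1}:=Z_k+{\rm Diag}\,M_k$ and $L_{k+1}:=\Phi_{k+1}\circ L_k\circ \Phi_{k+1}^{-1}$. By Lemma \ref{newnormal}, $Z_{k+1}$ is in normal form and $\gamma^{-1}\bral Z_{k+1}-Z_k\brar\leq \eps_k$, so
\[
\gamma^{-1}\bral Z_{k+1}\brar^{\gamma,\mathcal{O}_{k+1}}_{-\beta,s}\leq \Theta_0+\sum_{0\leq\nu\leq k}\eps_\nu\leq \Theta_0\bigl(1+\sum_{0<\nu\leq k+1}2^{-\nu}\bigr)=\Theta_{k+1}
\]
since $\eps_\nu=\eps_0 e^{-\chi^\nu}\leq \Theta_0 2^{-\nu-1}$ for $\Theta_\star$ small. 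For the remainder, Lemma \ref{nuovoresto} gives
\[
\gamma^{-1}\bral M_{k+1}\brar^{\gamma,\mathcal{O}_{k+1}}_{-\beta,s,\s_{k+1}}\leq_s K_k^{a}\Bigl(e^{-(\s_k-\s_{k+1})K_k}+\eps_k\Bigr)\eps_k.
\]
With $(\s_k-\s_{k+1})K_k=2^{k-3}\s_0 K_0$, the first bracket is doubly exponentially small in $k$, and the quadratic $\eps_k^2$ term dominates the scheme. Choosing any $1<\chi<2$ and then $K_0$ large and $\Theta_\star$ small, a direct computation shows $C(s)K_k^{a}(e^{-(\s_k-\s_{k+1})K_k}+\eps_k)\leq e^{-\chi^{k+1}+\chi^k}$, giving $\eps_{k+1}\leq\eps_0 e^{-\chi^{k+1}}$ and closing the induction.

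The main obstacle is the bookkeeping in Step 4: one must choose $\chi$, $K_0$ and $\Theta_\star$ so that the polynomial growth $K_k^{a}$ (with the large exponent $a=2\tau+n\tau/\beta+n+2d+1$) is beaten both by the exponential $e^{-(\s_k-\s_{k+1})K_k}$ coming from the Fourier cutoff and by the super-exponential decay $\eps_0 e^{-\chi^k}$ of the previous perturbation, simultaneously verifying \eqref{smallsmall} and the hypothesis $\bral Z_k\brar\leq \gamma/4$ used in applying Lemma \ref{omoequation} at every step. Once the triple $(\chi,K_0,\Theta_\star)$ is fixed, all remaining verifications are routine geometric-series sums.
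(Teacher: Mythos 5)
Your proof proposal follows essentially the same route as the paper: an induction on $k$ that, at each step, applies Lemma \ref{lem:misuro} (measure), Lemma \ref{omoequation} (homological equation), Lemma \ref{stimaMappa} (flow estimate), Lemma \ref{newnormal} (new normal form) and Lemma \ref{nuovoresto} (new remainder), and then closes the loop by verifying \eqref{constraintS}, \eqref{smallsmall} and the iterative bounds \eqref{ittero} from the parameter definitions \eqref{parametri}, with the super-exponential decay $e^{-\chi^k}$ beating the geometric factor $K_k^{\mathtt a}=4^{k\mathtt a}K_0^{\mathtt a}$. One small imprecision worth noting: in Step 4 you justify $\eps_\nu=\eps_0 e^{-\chi^\nu}\leq \Theta_0 2^{-\nu-1}$ by ``$\Theta_\star$ small,'' but since $\eps_0=\Theta_0$ the factor $\Theta_0$ cancels and this becomes the inequality $e^{-\chi^\nu}\leq 2^{-\nu-1}$, which is a constraint on $\chi$ alone (roughly $\chi\gtrsim 1.45$ for the bound to hold at $\nu=2$), not on $\Theta_\star$; the paper's own \eqref{gold3} relies on the same inequality without comment, so this is not a gap you introduced, but your stated justification is off-target.
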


\begin{proof}[Proof of Proposition \ref{lemmaitero}]
We proceed by induction. At step $k=0$ the operator 
$L_0$ is defined on $\mathcal O_0$ 
by \eqref{start} which is of the form \eqref{opLnu} 
and satisfies \eqref{ittero}. 
Now assume  that we have construct the 
sets $\mathcal O_p$, the operators $L_p$ and 
the changes of variables $\Phi_{p}$ for $p=1,\cdots,k$ 
and let us construct them at step $k+1$.

\medskip

Since \eqref{ittero} implies 
$\bral Z_k\brar_{-\beta,s}^{\gamma,\mathcal O_k}\leq \gamma/4$ 
for $\Theta_0$ small 
enough\footnote{Notice that $\s_{k+1}\searrow \s_0/2$. Hence, by  
 \eqref{parametri}, 
 we have
\[
\s_{k}-\s_{k+1}=\s_{k}\frac{1}{2^{k+3}}\geq \s_0\frac{1}{2^{k+4}}
\geq \frac{1}{K_{k}}=\frac{1}{4^{k} {K_0}}
\]
for ${K_0}>0$ large enough.},  we  use Lemmata \ref{lem:misuro} and  \ref{omoequation} to construct $\mathcal O_{k+1}$, $S_{k+1}$ and $R_{k+1}$. 
The set $\mathcal{O}_{k+1}$ is defined as in \eqref{calO+}
with $\mathcal{O}\rightsquigarrow\mathcal{O}_{k}$, $K\rightsquigarrow K_{k}$
and satisfies the \eqref{measO} by Lemma \ref{lem:misuro}.
By the induction hypothesis
\eqref{ittero} we have
\begin{equation}\label{ferrari}
\gamma^{-1} K_{k}^{2\tau+\frac{n}{\beta}\tau+n+2d+1}
\bral M_{k}\brar_{-\beta,s,\s}^{\gamma,\mathcal{O}}{\leq_{s}}\e_0 
K_{k}^{2\tau+\frac{n}{\beta}\tau+n+2d+1}
 e^{-\chi^{k}}\leq \e_0 C_{\star}
2^{-k}\,,
\end{equation}
provided that
\[
C_{\star}\geq \max_{k}\big( {K_0}^{\mathtt{a}}2^{k}4^{k\mathtt{a}}e^{-\chi^{k}}\big)\,,
\qquad \mathtt{a}=2\tau+\frac{n}{\beta}\tau+n+2d+1\,.
\]
The \eqref{ferrari} implies the smallness condition \eqref{smallsmall}
with $K\rightsquigarrow K_{k}$, $M\rightsquigarrow M_{k}$.
Then Lemma
 \ref{stimaMappa}
provides a map $\Phi_{k+1}={\rm Id}+\Psi_{k+1}$
such that
\begin{equation}\label{ferrari2}
\bral \Psi_{k+1}\brar_{-\beta,s,\s_{k+1}}^{\gamma,\mathcal{O}_{k+1}}
\stackrel{\eqref{stimaMappa1}}{\leq_{s}} 
\gamma^{-1} K_{k}^{2\tau+\frac{n}{\beta}\tau+n+2d+1}
\bral M_{k}\brar_{-\beta,s,\s}^{\gamma,\mathcal{O}_{k}}
\end{equation}
which, by \eqref{ferrari},
implies the \eqref{stimaPsinu}.
By Lemmata \ref{newnormal} and  \ref{nuovoresto} we construct 
\[
L_{k+1}:=\Phi_{k+1}\circ L_{k}\circ \Phi_{k+1}^{-1}=
\omega\cdot\pa_{\vphi}-\ii \big(\mathcal{D}^{2}+Z_{k+1}\big)+M_{k+1}
\]
with $M_{k+1}=M_{+}$ Hamiltonian and  
\begin{equation}\label{normalForm+}
Z_{k+1}=Z_+=Z_{k}+{\rm Diag}M_{k}
\end{equation}
is in normal form (see the \eqref{newNorm} in Def. \ref{normalform200}).
Moreover, by the estimate \eqref{parole},
we deduce that
\begin{equation}\label{gold3}
\begin{aligned}
\!\!\gamma^{-1}\bral Z_{k+1}\brar_{-\beta,s}^{\gamma,\mathcal{O}_{k+1}}
&\leq 
(\Theta_{k}+\e_{k})
\stackrel{\eqref{parametri}}{\leq} 
\Theta_{0}
\big(1+\sum_{j=1}^{k}\frac{1}{2^{j}}\big)
+
\e_0 e^{-\chi^{k}}
\leq
\Theta_0\big(1+\sum_{j=1}^{k+1}\frac{1}{2^{j}}\big)=
\Theta_{k+1}\,.
\end{aligned}
\end{equation}
On the other hand we note that Lemma \ref{nuovoresto} implies 
$$
\gamma^{-1}
\bral M_{k+1}\brar_{-\beta,s,\s_{k+1}}^{\gamma,\mathcal{O}_{k+1}}\stackrel{\eqref{nuovoRem1}}{\leq}
K_{k}^{2\tau+\frac{n}{\beta}\tau+n+2d+1}
\e_{k}  \left(
e^{-\sqrt{K_0}}e^{- 2^{k}}+
\e_{k}\right)
$$
where we used that $(\s_k-\s_{k+1})K_k\geq K_0 2^{k-3}\geq 2^k+\sqrt{K_0}$ for $K_0$ large enough.
Hence 
\begin{equation}\label{gold4}
\begin{aligned}
\gamma^{-1}
\bral M_{k+1}\brar_{-\beta,s,\s_{k+1}}^{\gamma,\mathcal{O}_{k+1}}
&\leq 
2K_{k}^{2\tau+\frac{n}{\beta}\tau+n+2d+1}
\e_0(e^{-\sqrt{K_0}}+\e_0)((e^{-\chi^k})^2\\
&\leq 2K_{0}^{2\tau+\frac{n}{\beta}\tau+n+2d+1}
(e^{-\sqrt{K_0}}+\e_0)  4^{k(2\tau+\frac{n}{\beta}\tau+n+2d+1)}
e^{-(2-\chi)\chi^k}\e_0e^{-\chi^{k+1}} \\
&\leq \eps_0 e^{-\chi^{k+1}}:=\eps_{k+1}
\end{aligned}
\end{equation}
provided $\e_0$ small enough and $K_0$ large enough.
The  \eqref{gold3} and \eqref{gold4} yields \eqref{ittero} with  $k\rightsquigarrow k+1$.
\end{proof}

\subsection{Convergence and Proof of Theorem \ref{thm:redu}}
\begin{proof}[{\bf Proof of Theorem \ref{thm:redu}}]
By the smallness condition \eqref{pescerosso} we have that
hypothesis \eqref{tuttopiccolo} holds
for $\epsilon$ sufficiently small. Hence Proposition \ref{riduco}
applies to the operator $G$ in \eqref{start0}.
The operator \eqref{conj} has the form \eqref{start}
with $M_0\rightsquigarrow R_+$, 
$\mathcal{O}\rightsquigarrow\mathcal{G}_0\cap\mathcal{O}$, 
with $\mathcal{G}_0$ in 
\eqref{zeroMel}
and $\s_0:=\s_+=\s/2$
and $\e_0$ in \eqref{small} satisfies $\e_0\leq_s \epsilon$.
So taking again $\epsilon$ small enough we can apply Proposition 
\ref{lemmaitero} for some $K_0\geq K_\star$.\\
Let us define the set
\begin{equation*}
\mathcal{O}_{\infty}:=\cap_{\nu\geq0}\mathcal{O}_{\nu}\,.
\end{equation*}
By \eqref{measO} we deduce
\eqref{measureEst}. We also notice that $\s_k\geq \s_0/2$ for all $k\geq 0$.
Then, by \eqref{Zk} and \eqref{ittero}, 
we deduce that
\[
\bral Z_{k+1}-Z_k\brar_{-\beta,s}^{\gamma,\mathcal{O}_{\infty}}
\leq \eps_k
\]
and thus, since $\sum \eps_k<+\infty$, 
$Z_k$ is a Cauchy sequence in 
$\mathcal{M}^{\gamma,\mathcal{O}_{\infty}}_{-\beta,s}$ 
and we can define the block diagonal hermitian operator
 \[
 \lim_{\nu\to+\infty}Z_{\nu}:=
 Z_\infty\in \mathcal{M}^{\gamma,\mathcal{O}_{\infty}}_{-\beta,s}\,.
 \]
As a consequence of Corollary \ref{coromu} we deduce \eqref{stimaeigen}.\\ 
 Then definig $\widetilde{\Phi}_{\nu}:=\Phi_{1}\circ\Phi_{2}\circ\cdots
\Phi_{k}={\rm Id}+\widetilde{\Psi}_{k}$
we have by \eqref{stimaPsinu}
\[
\widetilde{\Psi}_{k+1}=\widetilde{\Psi}_{k}+(1+\widetilde{\Psi}_{k})\Psi_{k+1}\qquad  \Rightarrow
\qquad
\bral \widetilde{\Psi}_{k+1}-\widetilde{\Psi}_{k}
\brar_{-\beta,s,\s_0/2}^{\gamma,\mathcal{O}_{\infty}}\leq 
C_{\star}\e_02^{-k}\,.
\]
Thus $\widetilde{\Psi}_k$ is a Cauchy sequence in 
$\mathcal{M}^{\gamma,\mathcal{O}_{\infty}}_{-\beta,s,\s_0/2}$ 
and we can define its limit 
$\widetilde{\Psi}_{\infty}\in
\mathcal{M}^{\gamma,\delta,\mathcal{O}_{\infty}}_{-\beta,s,\s_0/2}$ 
which satisfies
\begin{equation}\label{stimeTRA200}
\begin{aligned}
&\bral \widetilde{\Psi}_{\infty}
\brar_{-\beta,s,\s_0/2}^{\gamma,\delta,\mathcal{O}}
\leq C(s)\e_0\leq_s\epsilon \,.
\end{aligned}
\end{equation}  
 Then the map $\Phi_{\infty}:={\rm Id}+\widetilde{\Psi}_{\infty}$
satisfies $\Phi_{\infty}:=\lim_{k\to+\infty}\tilde{\Phi}_{k}$.   
Finally for $\omega\in \mathcal{O}_{\infty}$ we set 
  \begin{equation}\label{mito31}
  \Phi:=\mathcal{T}\circ\Phi_{\infty}:={\rm Id}+\Psi\,,\qquad 
  \Psi:=\mathcal{F}+\widetilde{\Psi}_{\infty}+\mathcal{F}\circ\widetilde{\Psi}_{\infty}
  \end{equation}
  where $\mathcal{T}$ is the map given by Proposition \ref{riduco}.
  Since $\alpha-1\leq -\beta$ (see \eqref{parametrix}), by Remark
  \ref{inclusioni}
   we
  have that $\mathcal{F}$ belongs to 
  $\mathcal{M}_{-\beta,s,\s_0/2}^{\gamma,\mathcal{O}_{\infty}}$.
  The \eqref{stimeTRA}
  follows by composition using \eqref{bracket}, 
  \eqref{stimeTRA200} and \eqref{mito31}.
  The \eqref{stimeTRA1000} follows 
  by Lemma \ref{AzioneDec}.
   The \eqref{coniugato}
   follows by the construction.
\end{proof}

\appendix
\section{Technical Lemmata}\label{techtech}

In this appendix we assume $s> (d+n)/2$ and $\s> 0$.
\begin{lemma}\label{DecayAlg}
Let $A,B\in \mathcal{M}_{s,\s}$. Then
the following holds:
\begin{itemize}

\item[$(i)$] for any $z\in \ell_{s,\s}$ one has 
$\|Az\|_{s,\s}\leq C(s)|A|_{s,\s}\|z\|_{s,\s}$;

\smallskip
\item[$(ii)$] one has $|AB|_{s,\s}\leq C(s)|A|_{s,\s}|B|_{s,\s}$;

\smallskip
\item[$(iii)$] by setting (recall \eqref{timeMat})
$
\Pi_{N}A:=\sum_{|l|<N}A(l)e^{\ii l \cdot \vphi}
$
one has
\[
|({\rm Id}- \Pi_{N})A|_{s,\s'}\leq \frac{C(s)e^{-(\s-\s')N}}{(\s-\s')^{d}}
|A|_{s,\s}\,\quad 0<\s'<\s\,;
\]
\end{itemize}
Similar bounds holds also replacing $|\cdot|_{s,\s}$, $\|\cdot\|_{s,\s}$
with the norms $|\cdot|_{s,\s}^{\gamma,\mathcal{O}}$, 
$\|\cdot\|_{s,\s}^{\gamma,\mathcal{O}}$ respectively.
\begin{itemize}
\item[$(iv)$]
Let $\beta>0$ and 
$A\in \mathcal{M}_{s+\beta,\s}^{\gamma,\mathcal{O}}$
then (recall \eqref{Diag}) 
$\mathcal{D}^{\beta} A \mathcal{D}^{-\beta}, \mathcal{D}^{-\beta} A \mathcal{D}^{\beta}\in 
\mathcal{M}^{\gamma,\mathcal{O}}_{s,\s}$ and 
\begin{equation}\label{beebee}
|\mathcal{D}^{\beta} A \mathcal{D}^{-\beta}|^{\gamma,\mathcal{O}}_{s,\s}
+|\mathcal{D}^{-\beta} A \mathcal{D}^{\beta}|^{\gamma,\mathcal{O}}_{s,\s}
\leq
|A|^{\gamma,\mathcal{O}}_{s+\beta,\s}\,.
\end{equation}

\end{itemize}
\end{lemma}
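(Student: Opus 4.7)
My plan is to derive all four items from the convolution-like structure of $|\cdot|_{s,\s}$ together with the threshold $s>(d+n)/2$, in close analogy with the proof of Lemma~\ref{AlgebNorma}. The main algebraic inputs are the triangle splitting $\langle l,k\rangle^{s}\lesssim \langle l-p,k-k'\rangle^{s}+\langle p,k'\rangle^{s}$, the multiplicative splitting $e^{|l|\s}\leq e^{|l-p|\s}e^{|p|\s}$, and the observation that for each offset $h\in\mathbb{N}$ the number of $k'\in\mathbb{N}$ with $|k-k'|=h$ is at most two, so that summations over $k'$ can be replaced by summations over $h$ at harmless cost.

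For $(i)$ I would expand $(Az)_{[k]}(l)=\sum_{p,k'}A_{[k]}^{[k']}(l-p)z_{[k']}(p)$, bound $\|A_{[k]}^{[k']}(l-p)\|_{\mathcal{L}(L^{2})}\leq a(l-p,|k-k'|)$ with $a(l,h):=\sup_{|j-j'|=h}\|A_{[j]}^{[j']}(l)\|_{\mathcal{L}(L^{2})}$, then apply the two splittings together with Cauchy--Schwarz in $(p,k')$, and finally reindex $k'\rightsquigarrow h=|k-k'|$. The resulting discrete weighted Young-type inequality closes under the assumed threshold. Item $(ii)$ proceeds identically on $(AB)_{[k]}^{[k']}(l)=\sum_{p,j}A_{[k]}^{[j]}(l-p)B_{[j]}^{[k']}(p)$, this time factorizing $\langle l,|k-k'|\rangle^{s}$ via $|k-k'|\leq|k-j|+|j-k'|$ and using that for fixed $k,k'$ and offsets $h_{1},h_{2}$ there are $O(1)$ indices $j$ with $|k-j|=h_{1}$ and $|j-k'|=h_{2}$.

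Item $(iii)$ is immediate from the definition: for $|l|>N$ one has $e^{2|l|\s'}\leq e^{2|l|\s}e^{-2N(\s-\s')}$, and the residual factor $\sum_{|l|>N}e^{-(\s-\s')|l|}\lesssim(\s-\s')^{-d}$ by a standard geometric-series computation in $d$ dimensions. For $(iv)$, the elementary bound $k\leq k'+h$ for $|k-k'|=h$ combined with $\lambda_{k}\sim k^{2}$ yields $\lambda_{k}^{\beta/2}/\lambda_{k'}^{\beta/2}\lesssim\langle h\rangle^{\beta}$ (with the natural modification at $k=0$ where $\lambda_{k}=0$). Substituting into $(\mathcal{D}^{\beta}A\mathcal{D}^{-\beta})_{[k]}^{[k']}(l)=\lambda_{k}^{\beta/2}\lambda_{k'}^{-\beta/2}A_{[k]}^{[k']}(l)$ produces an extra weight $\langle h\rangle^{\beta}\leq\langle l,h\rangle^{\beta}$, which is precisely the cost of raising the Sobolev index $s\to s+\beta$ in \eqref{beebee}; the estimate for $\mathcal{D}^{-\beta}A\mathcal{D}^{\beta}$ is symmetric, and since $\lambda_{k}^{\beta/2}\lambda_{k'}^{-\beta/2}$ is $\omega$-independent the Lipschitz version follows by linearity.

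I expect the only non-routine point to be the bookkeeping in $(ii)$, where three block indices interact through two offsets while the decay norm records only a supremum per offset. The interplay between the weight splitting, the $O(1)$-fiber observation on $j$, and the summability threshold $s>(d+n)/2$ is what avoids any dimensional loss coming from the growing block sizes $d_{k}\lesssim\langle k\rangle^{n-1}$ of the spherical harmonic spaces on $\mathbb{S}^{n}$.
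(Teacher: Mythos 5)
Your argument is correct and, for the substantive parts, coincides with the paper's: items $(i)$--$(ii)$ are outsourced in the paper to Lemmata 2.6--2.7 of \cite{BCP}, which use exactly the convolution-plus-Cauchy--Schwarz mechanism and the offset reindexing $k'\rightsquigarrow h=|k-k'|$ that you describe; item $(iii)$ is indeed immediate from \eqref{decayNorm2}; and for item $(iv)$ your bound $k\le k'+h$ giving $\lambda_k^{\beta/2}\lambda_{k'}^{-\beta/2}\lesssim\langle h\rangle^{\beta}$ is the same estimate as the paper's two-case split ($k'\ge k/2$ versus $k'<k/2$), just organized without cases. Two small remarks. First, in $(iii)$ you do not need a ``residual factor'' $\sum_{|l|>N}e^{-(\s-\s')|l|}$: since $e^{2|l|\s'}\le e^{2|l|\s}e^{-2N(\s-\s')}$ for $|l|\ge N$, one gets directly $|({\rm Id}-\Pi_N)A|_{s,\s'}\le e^{-N(\s-\s')}|A|_{s,\s}$, which is even better than the stated bound; the $(\s-\s')^{-d}$ in the statement is slack. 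Second, your closing attribution of the threshold $s>(d+n)/2$ to the block sizes $d_k\lesssim\langle k\rangle^{n-1}$ is not quite right: the decay norm already measures each block in the operator norm $\|\cdot\|_{\mathcal{L}(L^2)}$, so the block dimensions never enter $(i)$--$(iv)$, and the convolution there takes place over $\mathbb{Z}^d\times\mathbb{N}$ (effectively $(d+1)$-dimensional), for which $s>(d+1)/2$ would already suffice; the stronger hypothesis $s>(d+n)/2$ is imposed globally in the appendix because it is needed for the function-space algebra in Lemma \ref{AlgebNorma}, not for this lemma.
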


\begin{proof}
Items $(i)$ and $(ii)$ follows by lemmata $2.6$, $2.7$ in \cite{BCP}.
Item $(iii)$ follows by the definition of the norm in \eqref{decayNorm2}.
To prove item $(iv)$ we reason as follows.
We study the operator 
$\mathcal{D}^{\beta}A \mathcal{D}^{-\beta}$. The bound for 
$\mathcal{D}^{-\beta}A \mathcal{D}^{\beta}$ can be deduced in the same way.
First we note
 that
 \[
 \big(\mathcal{D}^{\beta}A \mathcal{D}^{-\beta}\big)_{[k]}^{[k']}(l)=
 \lambda_{k}^{\frac{1}{2}\beta}
 \lambda_{k'}^{-\frac{1}{2}\beta}A_{[k]}^{[k']}(l)\,.
 \]
 If $k'\geq 1/2 k$ then, recalling \eqref{equiweight},
 we deduce
 \begin{equation}\label{albero12}
  \lambda_{k}^{\frac{1}{2}\beta}
 \lambda_{k'}^{-\frac{1}{2}\beta}\|A_{[k]}^{[k']}(l)\|_{\mathcal{L}(L^2)}\leq_{s}
 \|A_{[k]}^{[k']}(l)\|_{\mathcal{L}(L^2)}\,.
 \end{equation}
If on the contrary $k'\leq 1/2 k$, then $|k-k'|\geq 1/2 k$. Hence
we have
 \begin{equation}\label{albero13}
  \lambda_{k}^{\frac{1}{2}\beta}
 \lambda_{k'}^{-\frac{1}{2}\beta}\|A_{[k]}^{[k']}(l)\|_{\mathcal{L}(L^2)}\leq_{s}
 \|A_{[k]}^{[k']}(l)\|_{\mathcal{L}(L^2)}|k-k'|^{\beta}\,.
 \end{equation}
 Bounds \eqref{albero12} and \eqref{albero13}
imply \eqref{beebee} for the norm $|\cdot|_{s,\s}$. The bound for the 
 Lipschitz norm in \eqref{decayLip}
 follows in the same way.
\end{proof}

\begin{lemma}\label{decayspaziotempo}
Let $A$ be a matrix as in \eqref{timeMat}
with finite $|\cdot|_{s,\s}$ norm.
 Then
\[
|A(\vphi)|_{s}:=
\Big(
\sum_{h\in \mathbb{N}}\langle h\rangle^{2s}
\sup_{|k-k'|=h}\|A_{[k]}^{[k']}(\vphi)\|^{2}_{\mathcal{L}(L^{2})}
\Big)^{\frac{1}{2}}
 \leq_{s}\frac{1}{(\s-\s')^{s_0+d}}
 |A|_{s,\s}\,,\quad \forall\,\vphi\in \mathbb{T}_{\s'}^{d}\,,\,\s'<\s\,.
\]
\end{lemma}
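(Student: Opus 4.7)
The plan is to bound the spatial decay norm $|A(\vphi)|_s$ at a fixed $\vphi\in\mathbb{T}^d_{\s'}$ by Fourier expanding $A(\vphi)=\sum_{l\in\mathbb{Z}^d} A(l)e^{\ii l\cdot\vphi}$ and trading the exponential decay in $|A|_{s,\s}$ for negative powers of the strip gap $\s-\s'$. Writing $\vphi=a+\ii b$ with $|b|<\s'$, the bound $|e^{\ii l\cdot\vphi}|\leq e^{|l|\s'}$ together with the triangle inequality gives, for each fixed $k,k'\in\mathbb{N}$,
\[
\|A_{[k]}^{[k']}(\vphi)\|_{\mathcal{L}(L^2)}\leq \sum_{l\in\mathbb{Z}^d}\|A_{[k]}^{[k']}(l)\|_{\mathcal{L}(L^2)}\,e^{|l|\s'}.
\]

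The key step is a Cauchy--Schwarz inequality after the factorization $e^{|l|\s'}=e^{|l|\s}\cdot e^{-|l|(\s-\s')}$. It yields
\[
\|A_{[k]}^{[k']}(\vphi)\|^2_{\mathcal{L}(L^2)}\leq \Big(\sum_{l}e^{-2|l|(\s-\s')}\Big)\Big(\sum_{l}e^{2|l|\s}\|A_{[k]}^{[k']}(l)\|^2_{\mathcal{L}(L^2)}\Big)\leq \frac{C}{(\s-\s')^{d}}\sum_{l}e^{2|l|\s}\|A_{[k]}^{[k']}(l)\|^2_{\mathcal{L}(L^2)},
\]
where the first sum is controlled by the standard integral comparison $\sum_{l\in\mathbb{Z}^d}e^{-2|l|(\s-\s')}\lesssim (\s-\s')^{-d}$. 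Taking the supremum over pairs with $|k-k'|=h$ and using that the sup of a sum is bounded by the sum of sups puts the sup inside the sum over $l$.

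Finally, I would multiply by $\langle h\rangle^{2s}$ and sum in $h\in\mathbb{N}$; the elementary inequality $\langle h\rangle^{2s}\leq \langle l,h\rangle^{2s}$ lets the weight be absorbed into the factor appearing in Definition \ref{decayNorm}, so that the resulting double sum is precisely $|A|_{s,\s}^2$ by \eqref{decayNorm2}. This produces
\[
|A(\vphi)|_s^{2}\leq \frac{C(s)}{(\s-\s')^{d}}\,|A|_{s,\s}^{2},
\]
which is at least as strong as the claimed bound. I do not foresee any real obstacle; the only delicate point is to arrange the Cauchy--Schwarz weights so that all of $\langle l,h\rangle^{2s}$ remains available on the right to dominate $\langle h\rangle^{2s}$ on the left, while the entire $(\s-\s')^{-d}$ comes from the purely exponential part of the split.
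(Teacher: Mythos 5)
Your proof is correct, and it takes a genuinely different route from the paper's. The paper applies Cauchy--Schwarz with \emph{polynomial} weights $|l|^{\pm s_0}$ (with $s_0:=(d+1)/2>d/2$ chosen so that $\sum_{l}|l|^{-2s_0}$ converges), which leaves a residual factor $e^{-2(\s-\s')|l|}|l|^{2s_0}$ that is then estimated by maximizing $e^{-(\s-\s')x}x^{m}$ over $x$, producing the power $(\s-\s')^{-(s_0+d)}$ that appears in the statement. You instead split the exponential directly, $e^{|l|\s'}=e^{|l|\s}e^{-|l|(\s-\s')}$, and apply Cauchy--Schwarz with \emph{exponential} weights, so that the entire gain comes from the single convergent sum $\sum_{l\in\mathbb{Z}^d}e^{-2|l|(\s-\s')}\lesssim(\s-\s')^{-d}$. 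This is cleaner, avoids introducing the auxiliary parameter $s_0$ into the loss altogether, and in fact yields the stronger bound $|A(\vphi)|_s\leq_s(\s-\s')^{-d/2}|A|_{s,\s}$, which implies the stated estimate since $d/2<s_0+d$. The remaining manipulations (sup of a sum bounded by sum of sups, $\langle h\rangle^{2s}\leq\langle l,h\rangle^{2s}$) are the same in both arguments.
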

\begin{proof}
For any $\vphi\in \mathbb{T}_{\s'}$ we have (using Cauchy-Schwarz inequality and taking $s_0:=(d+1)/2$)
\[
\begin{aligned}
|A(\vphi)|_{s}^{2}&=\sum_{h\in \mathbb{N}}\langle h\rangle^{2s}
\sup_{|k-k'|=h}\|A_{[k]}^{[k']}(\vphi)\|^{2}_{\mathcal{L}(L^{2})}\leq_{s,s_0}
\sum_{h\in \mathbb{N}}\langle h\rangle^{2s}
\sup_{|k-k'|=h}\sum_{l\in\mathbb{Z}^{d}}
\|A_{[k]}^{[k']}(l)\|^{2}_{\mathcal{L}(L^{2})}e^{2|l|\s'}|l|^{2s_0}\\
&\leq 
\sum_{l\in \mathbb{Z}^{d},h\in \mathbb{N}}\langle l\rangle^{2s}e^{2|l|\s}
\sup_{|k-k'|=h}\|A_{[k]}^{[k']}(l)\|^{2}_{\mathcal{L}(L^{2})} e^{-2(\s-\s')|l|}|l|^{s_0}
\leq_{s}\frac{1}{(\s-\s')^{2(s_0+d)}}|A|^{2}_{s,\s}
\end{aligned}
\]
where we used that the function $e^{-(\s-\s')x}x^{s_0+d}$ has a maximum
in $x=s_0/(\s-\s')$.
\end{proof}

\begin{lemma}\label{DecayAlg2}
Let $\alpha,\beta\in \mathbb{R}$ and 
consider $A\in \mathcal{M}^{\gamma,\mathcal{O}}_{\alpha,s+\beta,\s}$
and
$B\in \mathcal{M}^{\gamma,\mathcal{O}}_{\beta,s+\alpha,\s}$.
There is $C(s)>0$ such that
\begin{align}
&\bral A M\brar_{\alpha+\beta,s,\s}^{\gamma,\mathcal{O}}
\leq C(s)\bral A\brar^{\gamma,\mathcal{O}}_{\alpha,s+|\beta|,\s}
\bral M\brar^{\gamma,\mathcal{O}}_{\beta,s+|\alpha|,\s}\,,\label{stim2}\\
&\bral({\rm Id}- \Pi_{N})M\brar^{\mu,\mathcal{O}}_{\beta,s,\s'}\leq 
\frac{C(s)e^{-(\s-\s')N}}{(\s-\s')^{d}}
\bral M\brar^{\gamma,\mathcal{O}}_{\beta,s,\s}\,
\quad 0<\s'<\s\,.\label{stim3}
\end{align}
Moreover, if $\alpha\leq\beta< 0$ then
\begin{equation}\label{stim1}
\bral AM \brar^{\gamma,\mathcal{O}}_{\beta,s,\s}
\leq C(s)\bral A\brar^{\gamma,\mathcal{O}}_{\alpha,s,\s}
\bral M\brar^{\gamma,\mathcal{O}}_{\beta,s,\s}\,.
\end{equation}
\end{lemma}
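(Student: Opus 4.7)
The plan is to reduce each of the three bounds to the corresponding property of the plain $(s,\s)$-decay norm $|\cdot|_{s,\s}$ established in Lemma \ref{DecayAlg}, by inserting resolutions of the identity $\mathcal{D}^{\g}\mathcal{D}^{-\g}={\rm Id}$ and commuting $\mathcal{D}$-factors through $A$ and $M$, the commutation cost being absorbed by the conjugation estimate \eqref{beebee}.

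For \eqref{stim2} I would start from the algebraic identity
\[
\mathcal{D}^{-(\alpha+\beta)} A M=\bigl(\mathcal{D}^{-\beta}(\mathcal{D}^{-\alpha}A)\mathcal{D}^{\beta}\bigr)\bigl(\mathcal{D}^{-\beta}M\bigr),
\]
so that Lemma \ref{DecayAlg}(iv) controls the first factor by $\bral A\brar_{\alpha,s+|\beta|,\s}^{\gamma,\mathcal{O}}$ (at the cost of the loss $s\rightsquigarrow s+|\beta|$ built into the statement), while the second is bounded by $\bral M\brar_{\beta,s,\s}^{\gamma,\mathcal{O}}$ by definition; submultiplicativity of the decay norm (Lemma \ref{DecayAlg}(ii)) then handles the product. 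A symmetric factorization $AM\mathcal{D}^{-(\alpha+\beta)}=(A\mathcal{D}^{-\alpha})\bigl(\mathcal{D}^{\alpha}(M\mathcal{D}^{-\beta})\mathcal{D}^{-\alpha}\bigr)$ gives the matching bound on the right, with the loss $|\alpha|$ placed on $M$, and summing both contributions produces \eqref{stim2}.

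The estimate \eqref{stim3} is simply the smoothing bound of Lemma \ref{DecayAlg}(iii) transplanted to the $\bral\cdot\brar$-norm: since the Fourier cut-off $\Pi_N$ acts on $l\in\mathbb{Z}^d$ only and commutes with the $\vphi$-independent diagonal operator $\mathcal{D}^{-\beta}$, we have
\[
\bral({\rm Id}-\Pi_N)M\brar_{\beta,s,\s'}^{\gamma,\mathcal{O}}=|({\rm Id}-\Pi_N)(\mathcal{D}^{-\beta}M)|_{s,\s'}^{\gamma,\mathcal{O}}+|({\rm Id}-\Pi_N)(M\mathcal{D}^{-\beta})|_{s,\s'}^{\gamma,\mathcal{O}},
\]
and (iii) applies to each piece directly.

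Finally, for \eqref{stim1} the hypothesis $\alpha\leq\beta<0$ is crucial: it forces the exponents $\alpha-\beta$, $\alpha$ and $\beta$ to be non-positive, so that $\mathcal{D}^{\alpha-\beta}$, $\mathcal{D}^{\alpha}$ and $\mathcal{D}^{\beta}$ act as diagonal operators of norm $\leq 1$ on each block (using \eqref{equiweight}, modulo the harmless zero eigenvalue at $k=0$). From this one reads off the trivial controls $|\mathcal{D}^{\alpha-\beta}X|_{s,\s}^{\gamma,\mathcal{O}}\leq|X|_{s,\s}^{\gamma,\mathcal{O}}$, $|A|_{s,\s}^{\gamma,\mathcal{O}}\leq C\bral A\brar_{\alpha,s,\s}^{\gamma,\mathcal{O}}$ and $|M|_{s,\s}^{\gamma,\mathcal{O}}\leq C\bral M\brar_{\beta,s,\s}^{\gamma,\mathcal{O}}$; combining these with the decompositions $\mathcal{D}^{-\beta}AM=\mathcal{D}^{\alpha-\beta}(\mathcal{D}^{-\alpha}A)M$ and $AM\mathcal{D}^{-\beta}=A(M\mathcal{D}^{-\beta})$ and with Lemma \ref{DecayAlg}(ii) finishes the proof. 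The only step that requires any real care is choosing, for each of the factorizations, the correct side on which to push the $\mathcal{D}$-factors so that the conjugation loss falls on the factor whose norm tolerates it; there is no genuine analytic obstacle beyond Lemma \ref{DecayAlg}.
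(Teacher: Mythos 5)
Your proof is correct and follows essentially the same route as the paper: the central step for \eqref{stim2} is the factorization $AM\mathcal{D}^{-\alpha-\beta}=(A\mathcal{D}^{-\alpha})\bigl(\mathcal{D}^{\alpha}(M\mathcal{D}^{-\beta})\mathcal{D}^{-\alpha}\bigr)$, with the conjugation cost absorbed by Lemma \ref{DecayAlg}(iv) and the product handled by (ii), which is exactly what the paper does, and \eqref{stim3}, \eqref{stim1} are reduced to Lemma \ref{DecayAlg} as there. You have merely written out the symmetric factorization for $\mathcal{D}^{-\alpha-\beta}AM$ and the details of \eqref{stim3} and \eqref{stim1}, which the paper dispatches with ``reasoning similarly'' and ``follow by Lemma \ref{DecayAlg}''.
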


\begin{proof}
To prove \eqref{stim2} we need to bound the decay norms of
the operators $\mathcal{D}^{-\alpha-\beta}AM$ and 
$AM\mathcal{D}^{-\alpha-\beta}$. We have that
\[
AM\mathcal{D}^{-\alpha-\beta}=A\mathcal{D}^{-\alpha}
\mathcal{D}^{\alpha}(M\mathcal{D}^{-\beta})\mathcal{D}^{-\alpha}.
\]
Hence, by item $(iv)$ in Lemma \ref{DecayAlg}, 
$
|AM\mathcal{D}^{-\alpha-\beta}|_{s,\s}
{\leq_s}
\bral A\brar_{\alpha,s,\s}\bral M\brar_{\beta,s+|\alpha|,\s}\,.
$
Reasoning similarly one obtains the \eqref{stim2} for the Lipschitz norm 
$\bral \cdot\brar_{\alpha+\beta,s,\s}^{\gamma,\mathcal{O}}$ .
The \eqref{stim1} and \eqref{stim3} follow by Lemma \ref{DecayAlg}.
\end{proof}

 \begin{lemma}\label{AzioneDec}
Let $\beta\in \mathbb{R}$ and 
consider $A\in \mathcal{M}^{\gamma,\mathcal{O}}_{-\beta,s,\s}$.
Then
\begin{equation}\label{mito}
\|\mathcal{D}^{\beta} A h\|_{s,\s}\leq 
\bral A\brar_{-\beta,s,\s}
\|h\|_{s,\s}\,,\qquad \forall\; h\in \ell_{s,\s}\,.
\end{equation}
In particular (recall \eqref{spseq})
$A(\vphi) : h^{s}\mapsto h^{s+\beta}\,,\;\;\forall 
\vphi\in \mathbb{T}_{\s'}^{d}\,,\s'<\s\,, $ and
\begin{equation}\label{mito2}
\|A(\vphi)v\|_{s+\beta}\leq_s \frac{1}{(\s-\s')^{s_0+d}} \bral A\brar_{-\beta,s,\s}\|v\|_{s}\,
\end{equation}
for any $ v\in h^{s}$.
\end{lemma}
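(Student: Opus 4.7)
The plan is to reduce both bounds to properties already established for matrices in $\mathcal{M}_{s,\s}$, by working throughout with the auxiliary matrix $B := \mathcal{D}^{\beta}A$. By Definition \ref{1smooth} the hypothesis $A \in \mathcal{M}^{\gamma,\mathcal{O}}_{-\beta,s,\s}$ is precisely the statement that $|B|_{s,\s}^{\gamma,\mathcal{O}} \leq \bral A\brar_{-\beta,s,\s}^{\gamma,\mathcal{O}}$, so every subsequent estimate on $B$ transfers immediately to one on $\mathcal{D}^{\beta}A$.

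I would first establish \eqref{mito} by a direct appeal to item $(i)$ of Lemma \ref{DecayAlg} applied to $B$: since $B \in \mathcal{M}_{s,\s}$ and $h \in \ell_{s,\s}$,
$$\|\mathcal{D}^{\beta} A h\|_{s,\s} \;=\; \|B h\|_{s,\s} \;\leq_s\; |B|_{s,\s}\|h\|_{s,\s} \;\leq_s\; \bral A\brar_{-\beta,s,\s}\|h\|_{s,\s},$$
which is exactly \eqref{mito}. (The literal constant $1$ in the displayed statement is presumably absorbed into the $\leq_s$ convention used throughout the paper.) The weighted Lipschitz version follows verbatim by replacing $|\cdot|_{s,\s}$ and $\bral\cdot\brar_{-\beta,s,\s}$ by their $\gamma$-weighted analogues.

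For the pointwise-in-$\vphi$ bound \eqref{mito2}, I would fix $\vphi \in \mathbb{T}^{d}_{\s'}$, $\s'<\s$, and exploit the equivalence $\sqrt{\lambda_k} \sim \langle k\rangle$ of \eqref{equiweight} to identify the standard Sobolev norm $\|A(\vphi)v\|_{s+\beta}$ on $\mathbb{S}^{n}$ with $\|\mathcal{D}^{\beta} A(\vphi)v\|_s = \|B(\vphi)v\|_s$ (up to the zero-mode correction discussed below). The spatial analogue of Lemma \ref{DecayAlg}(i), which follows by restricting item $(i)$ to a $\vphi$-independent matrix and using the spatial decay norm $|\cdot|_s$ introduced in Lemma \ref{decayspaziotempo}, then gives $\|B(\vphi)v\|_s \leq_s |B(\vphi)|_s\|v\|_s$. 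Finally Lemma \ref{decayspaziotempo} itself provides the pointwise bound $|B(\vphi)|_s \leq_s (\s-\s')^{-(s_0+d)}|B|_{s,\s}\leq_s (\s-\s')^{-(s_0+d)}\bral A\brar_{-\beta,s,\s}$, and chaining the three inequalities yields \eqref{mito2}.

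The proof is largely bookkeeping, and the only mild subtlety is the equivalence $\|u\|_{s+\beta} \sim_s \|\mathcal{D}^{\beta} u\|_s + \|u\|_s$: the operator $\mathcal{D}$ annihilates the kernel $E_0$ of $-\Delta$ (constants on $\mathbb{S}^n$), so for $\beta > 0$ one must add back the $\|u\|_s$ term to recover the full Sobolev norm, and for $\beta < 0$ one restricts $\mathcal{D}^\beta$ to the orthogonal complement of $E_0$. The zero-mode contribution is in each case trivially dominated by $\bral A\brar_{-\beta,s,\s}\|v\|_s$ via a direct Cauchy--Schwarz estimate on the $(l,k,k')=(0,0,0)$-block, so this is bookkeeping rather than a genuine obstacle.
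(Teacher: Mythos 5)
Your proof is correct and uses the same ingredients as the paper's, but packages the middle step differently. For \eqref{mito} you and the paper do the same thing: apply Lemma~\ref{DecayAlg}(i) to $B=\mathcal{D}^\beta A$. For \eqref{mito2} the paper does not invoke a spatial analogue of Lemma~\ref{DecayAlg}(i) as a black box; instead it carries out the Cauchy--Schwarz computation inline, writing
$\|A(\vphi)v\|^2_{s+\beta}\leq_s C(s)\sum_{k,j}\langle k-j\rangle^{2s}\|\langle k\rangle^\beta A_{[k]}^{[j]}(\vphi)\|^2\langle j\rangle^{2s}\|v_{[j]}\|^2$
with $C(s)=\sup_k\sum_j\langle k\rangle^{2s}\langle k-j\rangle^{-2s}\langle j\rangle^{-2s}<\infty$, and only then recognizes the right-hand side as $|\mathcal{D}^\beta A(\vphi)|_s^2\|v\|_s^2$, to which Lemma~\ref{decayspaziotempo} is applied. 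Your ``spatial analogue of Lemma~\ref{DecayAlg}(i)'' is precisely the content of that inline computation; it is not stated as a separate lemma in the paper, so strictly speaking you need to either supply it (it is a two-line restriction to $\vphi$-independent $A$ and $z$ in the proof of Lemma~2.6/2.7 of \cite{BCP}) or reproduce the Cauchy--Schwarz step. Either way the argument goes through.

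One place where your proof is actually more careful than the paper's: you explicitly flag that $\mathcal{D}$ kills the $k=0$ block (since $\lambda_0=0$), so $\|u\|_{s+\beta}$ is not literally $\|\mathcal{D}^\beta u\|_s$ and the identification of $\langle k\rangle^\beta$ with $\lambda_k^{\beta/2}$ breaks at $k=0$. The paper works with $\langle k\rangle^\beta$ throughout the computation and then passes to $|\mathcal{D}^\beta A(\vphi)|_s$ in the final line without comment, tacitly using \eqref{equiweight} away from $k=0$; your remark makes explicit the (harmless) gap there. Minor notational point: the offending block is the one corresponding to $\lambda_0=0$, i.e.\ indices with $k=0$ or $k'=0$; your reference to the ``$(l,k,k')=(0,0,0)$-block'' conflates the Fourier index $l$ with the eigenvalue index, but this doesn't affect the substance.
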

\begin{proof}
The \eqref{mito} follows by Lemma \ref{DecayAlg} and \eqref{decayNorm3}.
To prove \eqref{mito2} we reason as follows. We have
\begin{align}
\|A(\vphi)v\|^{2}_{s+\beta}&\stackrel{\eqref{spseq}}{=}
\sum_{k\in \mathbb{N}} \langle k\rangle^{2(s+\beta)}\|(A(\vphi)v)_{[k]}\|^{2} \leq_s
\sum_{k\in \mathbb{N}} \langle k\rangle^{2(s+\beta)}\Big(
\sum_{j\in\mathbb{N}}\|A_{[k]}^{[j]}(\vphi)\|_{\mathcal{L}(L^2)}\|v_{[j]}\|
\Big)^{2}\nonumber\\
&\leq_s\sum_{k,j\in \mathbb{N}} \langle k-j\rangle^{2s}
\|\langle k\rangle^{\beta}A_{[k]}^{[j]}(\vphi)\|^{2}_{\mathcal{L}(L^2)} \langle j\rangle^{2s}\|v_{[j]}\|^2 C(s)\label{mito3}
\end{align}
where 
$
C(s):=\sum_{j\in\mathbb{N}}\frac{ \langle k\rangle^{2s}}{ \langle k-j\rangle^{2s} \langle j\rangle^{2s}}\,.
$
It is easy to check that $C(s)<+\infty$. By Lemma \ref{decayspaziotempo}
and \eqref{mito3}
we have
\[
\|A(\vphi)v\|^{2}_{s+\beta}\leq_{s} |\mathcal{D}^{\beta}A(\vphi)|_{s}\|v\|_{s}
\leq_{s}
\frac{1}{(\s-\s')^{s_0+d}}\bral A\brar_{-\beta,s,\s}\|v\|_{s}
\]
which implies the thesis.
\end{proof}

\begin{lemma}\label{well-well}
Let $\beta<0$, consider 
$A\in \mathcal{M}_{\beta,s,\s}^{\gamma,\mathcal{O}}$ and assume
\begin{equation}\label{mito30}
C(s) \bral A\brar_{\beta,s,\s}^{\gamma,\mathcal{O}}\leq 1/2
\end{equation}
for some large $C(s)>0$.
Then the map $\Phi:={\rm Id}+\Psi$ defined in \eqref{exp}
satisfies
\begin{equation}\label{brackettech}
\bral \Psi\brar_{\beta,s,\s}^{\gamma,\mathcal{O}}
\leq_{s} \bral A\brar_{\beta,s,\s}^{\gamma,\mathcal{O}}\,,
\end{equation}
\end{lemma}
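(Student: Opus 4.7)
The plan is to exploit the power series definition $\Phi = e^{\ii A} = \mathrm{Id} + \sum_{p \geq 1} \frac{(\ii A)^p}{p!}$ so that $\Psi = \sum_{p \geq 1} \frac{(\ii A)^p}{p!}$, and then to bound the decay-norm of each iterated product $A^p$ using the algebra-type estimate \eqref{stim1} of Lemma \ref{DecayAlg2}. The hypothesis $\beta < 0$ is exactly what permits us to take $\alpha = \beta$ in \eqref{stim1}, which is the crucial gain: products of smoothing operators stay in the same smoothing class with only a constant factor $C(s)$, rather than requiring extra regularity in $s$.

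More precisely, I would first prove by induction on $p \geq 1$ that
\begin{equation*}
\bral A^{p}\brar_{\beta,s,\s}^{\gamma,\mathcal{O}} \leq C(s)^{p-1} \bigl(\bral A\brar_{\beta,s,\s}^{\gamma,\mathcal{O}}\bigr)^{p},
\end{equation*}
where $C(s)$ is the constant appearing in \eqref{stim1} applied with $\alpha = \beta$. The base case $p=1$ is trivial, and the inductive step follows by writing $A^{p+1} = A \cdot A^{p}$ and invoking \eqref{stim1} on this product (note that $\alpha = \beta < 0$ is admissible). Since multiplication by $\ii$ leaves the decay-norm unchanged, the same bound holds with $A$ replaced by $\ii A$.

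Summing the series and setting $x := \bral A\brar_{\beta,s,\s}^{\gamma,\mathcal{O}}$, one obtains
\begin{equation*}
\bral \Psi\brar_{\beta,s,\s}^{\gamma,\mathcal{O}}
\leq \sum_{p\geq 1} \frac{1}{p!} C(s)^{p-1} x^{p}
= \frac{1}{C(s)}\bigl(e^{C(s)x} - 1\bigr).
\end{equation*}
Under the smallness assumption \eqref{mito30}, i.e.\ $C(s) x \leq 1/2$, the elementary inequality $e^{y} - 1 \leq 2y$ for $0 \leq y \leq 1$ yields $\bral\Psi\brar_{\beta,s,\s}^{\gamma,\mathcal{O}} \leq 2 x$, which is exactly \eqref{brackettech}.

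No serious obstacle is expected: the only thing to be careful about is that each application of \eqref{stim1} produces a factor $C(s)$, so one must absorb the geometric series against the factorial $1/p!$; this is automatic. The smallness hypothesis is used only at the very last step to linearize the exponential. The same argument works verbatim for both the sup and Lipschitz parts of the weighted norm $\bral\cdot\brar_{\beta,s,\s}^{\gamma,\mathcal{O}}$, since \eqref{stim1} is stated for this norm directly.
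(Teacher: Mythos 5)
Your proof is correct and is essentially the argument the paper gives: expand the exponential, bound each iterated product via the algebra estimate \eqref{stim1} with $\alpha=\beta<0$, sum the resulting series, and use the smallness condition \eqref{mito30} to absorb it into a linear bound. You spell out the induction on $A^p$ and the elementary inequality $e^y-1\leq 2y$ more explicitly than the paper (which leaves these as a one-line computation), but the underlying mechanism and the role of the hypotheses are identical.
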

\begin{proof}
By \eqref{stim1}
we have
\[
\bral \Psi\brar_{\beta,s,\s}^{\gamma,\delta,\mathcal{O}}
\leq
\bral A\brar_{\beta,s,\s}^{\gamma,\mathcal{O}}
\sum_{p\geq 1}\frac{C(s)^{p}}{p!}
(\bral A\brar_{\beta,s,\s}^{\gamma,\mathcal{O}})^{p-1}\,,
\]
for some (large) $C(s)>0$. By the smallness condition \eqref{mito30}
one deduces   the bounds \eqref{brackettech}.
\end{proof}

We end with two results on the eigenvalues of Hermitian matrix.
\begin{lemma}\label{herm}
Let $\o\mapsto A(\o)$ be a Lipschitz mapping from $\mathcal O$ 
a compact set of $\R^d$ into the set  
Hermitian matrix of finite dimension $p$. 
Then the eigenvalues of $A(\o)$ can be 
ordered $\mu_1(\o)\leq \mu_2(\o)\leq\cdots\leq\mu_p(\o)$ 
in such a way each eigenvalue 
$\mu_j$ is  Lipschitz and
\begin{equation*}
|\mu_j|^{sup,\mathcal O}\leq \| A\|^{sup,\mathcal O}\,,\qquad
|\mu_j|^{lip,\mathcal O}\leq \| A\|^{lip,\mathcal O}\,, \qquad j=1,\cdots,p
\end{equation*}
where $\|\cdot\|$ denotes the operator norm.
\end{lemma}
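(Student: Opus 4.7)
The plan is to use the Courant--Fischer min-max characterization of eigenvalues of Hermitian matrices, which gives a coordinate-free formula for each $\mu_j$ and thereby avoids any appeal to eigenvector regularity (which can fail near crossings). For each $\omega\in\mathcal{O}$ I order the eigenvalues of $A(\omega)$ as $\mu_1(\omega)\leq\mu_2(\omega)\leq\cdots\leq\mu_p(\omega)$, counted with multiplicity. By Courant--Fischer,
\begin{equation*}
\mu_j(\omega)=\min_{\substack{V\subset\mathbb{C}^p\\ \dim V=j}}\ \max_{\substack{x\in V\\ \|x\|=1}}\langle A(\omega)x,x\rangle.
\end{equation*}

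For the sup bound, I would use that since $A(\omega)$ is Hermitian its operator norm equals the spectral radius, hence $|\mu_j(\omega)|\leq\max_k|\mu_k(\omega)|=\|A(\omega)\|$; taking the sup over $\omega\in\mathcal{O}$ yields $|\mu_j|^{sup,\mathcal O}\leq\|A\|^{sup,\mathcal O}$. For the Lipschitz bound I would use the elementary inequality, valid for any unit vector $x$,
\begin{equation*}
\langle A(\omega_1)x,x\rangle-\langle A(\omega_2)x,x\rangle=\langle(A(\omega_1)-A(\omega_2))x,x\rangle\leq\|A(\omega_1)-A(\omega_2)\|,
\end{equation*}
combined with the standard fact that if two functions on a set satisfy $f\leq g+C$ pointwise then $\min f\leq\min g+C$ and $\max f\leq\max g+C$. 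Applying this twice (once for the inner max, once for the outer min) to the Courant--Fischer formula for $A(\omega_1)$ and $A(\omega_2)$ gives $\mu_j(\omega_1)-\mu_j(\omega_2)\leq\|A(\omega_1)-A(\omega_2)\|$; swapping the roles of $\omega_1$ and $\omega_2$ yields
\begin{equation*}
|\mu_j(\omega_1)-\mu_j(\omega_2)|\leq\|A(\omega_1)-A(\omega_2)\|.
\end{equation*}
Dividing by $|\omega_1-\omega_2|$ and taking the sup over distinct pairs gives $|\mu_j|^{lip,\mathcal O}\leq\|A\|^{lip,\mathcal O}$.

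There is really no serious obstacle: the only subtle point is that one must \emph{not} try to follow individual eigenvectors (which are only Lipschitz outside a possibly complicated crossing set), but work instead with the sorted list of eigenvalues, whose Lipschitz regularity is a direct consequence of the variational formula. The two bounds are then immediate from the min-max representation and the triangle inequality for the operator norm.
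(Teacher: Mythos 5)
Your proof is correct and follows essentially the same approach as the paper, which also invokes the Courant--Fischer min-max formula (the paper's proof is a one-liner citing that formula). You have simply filled in the details: the sup bound via the spectral radius, and the Lipschitz bound via the standard comparison of min-max values for two Hermitian matrices.
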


\begin{proof}
This is a consequence of the Courant Fischer formula:
\[
\mu_j(A)=\min_{\rm{dim}V=k}\ \max_{\substack{x\in V\\\|x\|=1}}\langle Ax,x\rangle\, .
\]
\end{proof}
As a consequence we get the following.
\begin{corollary}\label{coromu}
 If $Z\in  \mathcal{M}^{\gamma,\mathcal{O}}_{-\beta,s,\s}$ is block diagonal then the eigenvalues of the block
$\mathcal{Z}_{[k]}^{[k]}$, denoted $\mu_{k,j}$, 
 $j=1,\ldots,d_{k}$, are  Lipschitz functions from $ \mathcal{O}$ into $\R$,
and satisfy
\begin{equation}\label{mu}
\sup_{\substack{k\in\mathbb{N}\\j=1,\cdots,d_k}}
\langle k\rangle^{\beta} 
|\mu_{k,j}|^{\gamma,\mathcal{O}}\leq
\bral\mathcal{Z}\brar_{-\beta,s,\s}^{\gamma,\mathcal{O}}\,.
\end{equation}
\end{corollary}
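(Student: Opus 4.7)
The plan is to combine Lemma \ref{herm} applied blockwise with the definition of the norm $\bral\cdot\brar_{-\beta,s,\s}^{\gamma,\mathcal{O}}$ from Definition \ref{1smooth}. Since $Z$ is block diagonal, for each fixed $k\in\mathbb{N}$ and each $\omega\in\mathcal{O}$ the block $Z_{[k]}^{[k]}(\omega)$ is a Hermitian matrix of dimension $d_{k}<\infty$, so Lemma \ref{herm} applies and produces Lipschitz functions $\omega\mapsto\mu_{k,1}(\omega)\leq\cdots\leq\mu_{k,d_{k}}(\omega)$, together with the bounds
\[
|\mu_{k,j}|^{sup,\mathcal{O}}\leq \|Z_{[k]}^{[k]}\|^{sup,\mathcal{O}}_{\mathcal{L}(L^{2})},\qquad
|\mu_{k,j}|^{lip,\mathcal{O}}\leq \|Z_{[k]}^{[k]}\|^{lip,\mathcal{O}}_{\mathcal{L}(L^{2})}.
\]
Multiplying the second by $\gamma$ and summing with the first gives $|\mu_{k,j}|^{\gamma,\mathcal{O}}\leq \|Z_{[k]}^{[k]}\|_{\mathcal{L}(L^{2})}^{\gamma,\mathcal{O}}$ for every $j=1,\ldots,d_{k}$.

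Next, I would translate the right-hand side into the operator norm $\bral Z\brar_{-\beta,s,\s}^{\gamma,\mathcal{O}}$. Because $\mathcal{D}$ acts as the scalar $\lambda_{k}^{1/2}$ on $E_{k}$ and $Z$ is block-diagonal, one has $(\mathcal{D}^{\beta}Z)_{[k]}^{[k]}(l)=\lambda_{k}^{\beta/2}\,Z_{[k]}^{[k]}(l)$ for every $l\in\mathbb{Z}^{d}$. Taking the $h=0$ supremum in the definition \eqref{decayNorm2} of the decay norm (and noting $\langle l,0\rangle^{2s}\geq 1$) yields
\[
\lambda_{k}^{\beta/2}\,\|Z_{[k]}^{[k]}\|_{\mathcal{L}(L^{2})}
\;\leq\;|\mathcal{D}^{\beta}Z|_{s,\s}
\;\leq\;\bral Z\brar_{-\beta,s,\s},
\]
and the identical inequality for the Lipschitz part. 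Hence $\lambda_{k}^{\beta/2}\|Z_{[k]}^{[k]}\|_{\mathcal{L}(L^{2})}^{\gamma,\mathcal{O}}\leq \bral Z\brar_{-\beta,s,\s}^{\gamma,\mathcal{O}}$.

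Finally, I would invoke \eqref{equiweight} to replace $\lambda_{k}^{\beta/2}$ by $\langle k\rangle^{\beta}$ up to a constant depending only on $\beta$, which by the conventions of the paper is absorbed into the implicit $\leq_{s}$. Combining the two displayed estimates gives
\[
\langle k\rangle^{\beta}|\mu_{k,j}|^{\gamma,\mathcal{O}}
\;\leq\;\lambda_{k}^{\beta/2}\|Z_{[k]}^{[k]}\|_{\mathcal{L}(L^{2})}^{\gamma,\mathcal{O}}
\;\leq\;\bral Z\brar_{-\beta,s,\s}^{\gamma,\mathcal{O}},
\]
uniformly in $k$ and $j$, which is exactly \eqref{mu}. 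There is no genuine obstacle here: the statement is really a bookkeeping exercise, and the only point to be careful with is that being block-diagonal makes $\mathcal{D}^{\beta}Z$ reduce to the scalar-multiplication action $\lambda_{k}^{\beta/2}Z_{[k]}^{[k]}$, so the factor $\lambda_{k}^{\beta/2}$ pops out of the decay norm rather than being distributed between different blocks.
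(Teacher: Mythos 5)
Your proof is correct and is exactly the argument the paper leaves implicit when it states the result as an immediate corollary of Lemma \ref{herm}: apply Lemma \ref{herm} blockwise to get Lipschitz eigenvalues with $|\mu_{k,j}|^{\gamma,\mathcal O}\leq\|Z_{[k]}^{[k]}\|_{\mathcal L(L^2)}^{\gamma,\mathcal O}$, then observe that for a block-diagonal $Z$ the $h=0$, $l=0$ contribution to $|\mathcal D^{\beta}Z|_{s,\s}$ already dominates $\lambda_k^{\beta/2}\|Z_{[k]}^{[k]}\|$. One small sharpening: since $\lambda_k=k(k+n-1)\geq k^2$, one has $\sqrt{\lambda_k}\geq\langle k\rangle$ for every $k\geq 1$, so the constant in \eqref{equiweight} can be taken to be $c=1$ on that side and the final inequality in \eqref{mu} holds with no hidden constant — you do not actually need the $\leq_s$ absorption. (The only genuine blemish, which is in the paper's setup rather than in your argument, is that $\lambda_0=0$ makes $\mathcal D^{-\beta}$ ill-defined; the intended convention is evidently to replace $\lambda_k^{1/2}$ by $\langle k\rangle$ on the zero mode, and with that reading the $k=0$ case is also covered.)
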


\bigskip

\end{document}